\newtheorem{theorem}{Theorem}[section]
\newtheorem{lemma}[theorem]{Lemma}
\newtheorem{corollary}[theorem]{Corollary}
\newtheorem{definition}[theorem]{Definition}
\newtheorem{proposition}[theorem]{Proposition}
\newtheorem{remark}[theorem]{Remark}
\def \R {{\mathbb {R}}}
\begin{document}
\title[Moser Iteration ]{H\"older regularity for weak solutions of H\"ormander type operators}

%\footnote{AMS Subject Classification: 35K57, 35K65, 35K70.}}
\author[G.~Citti]{Giovanna Citti}
\address{Dipartimento di Matematica, Piazza di Porta S. Donato 5, 401
26 Bologna, Italy}
\email{giovanna.citti@unibo.it}

\author[M.~Manfredini]{Maria Manfredini}
\address{Dipartimento di Scienze Fisiche,Informatiche e Matematiche, Via Campi 213, 41125 Modena, Italy}
\email{maria.manfredini@unimore.it}

\author[Y.~Sire]{Yannick Sire}
\address{Department of Mathematics, Johns Hopkins University, Baltimore, MD 21218, USA}
\email{ysire1@jhu.edu}

\bigskip

\begin{abstract}
Motivated by recent results on the (possibly conditional) regularity for time-dependent hypoelliptic equations, we prove a parabolic version of the Poincar\'e inequality, and as a consequence, we deduce a version of the classical Moser iteration technique using in a crucial way the geometry of the equation. The point of this contribution is to emphasize that one can use the {\sl elliptic} version of the Moser argument at the price of the lack of uniformity, even in the {\sl parabolic } setting. This is nevertheless enough to deduce H\"older regularity of weak solutions. The proof is elementary and unifies in a natural way several results in the literature on Kolmogorov equations, subelliptic ones and some of their variations. 
\end{abstract}

\maketitle

\normalsize

\tableofcontents

\section{Introduction}

In this paper we prove H\"older regularity for solutions of a general Kolmogorov type equation with measurable coefficients, 
applying a version of the Moser iteration, usually used only in the elliptic setting. 

We consider a family of smooth linearly independent vector fields
$(X_i)_{i=0, \ldots, m}$,   spanning an homogeneous group of step $s$ (see Definition \ref{Vecf} below). Define the operator $\mathcal L$ by
\begin{equation}\label{op}
\mathcal L u
= -X_0u  + \sum_{i,j=1}^m X_i (a_{i,j}(x) X_j u),\    
\end{equation}
where $A:=(a_{i,j})_{i,j}$
is a symmetric uniformly elliptic matrix with {\sl measurable} coefficients. The previous operator includes the following cases: 

\begin{itemize}
\item The subelliptic operators associated to a suitable  group structure generated by the vector fields. 
\item The ultra-parabolic setting whenever $X_0$ is seen as a time derivative. This includes in particular the so-called Kolmogorov operators and Kolmogorov-Fokker-Planck considered previously in the literature. 
\end{itemize}

In the present paper, we investigate regularity via functional inequalities, i.e. using the Moser technique, for the following very simple equation 
\begin{equation}\label{eq}
\mathcal L u =   f  
,\;\text{  on an open set  } \Omega \subset \mathbb R^n
\end{equation}
whenever $f$ belongs to a suitable $L^p$ space.

The classical Moser’s iteration method (see \cite{moser1}) has been been largely used for elliptic and subelliptic operators. 
The  iteration technique for parabolic operators is different from the elliptic one, already in 
the Euclidean setting. The H\"older regularity of weak solutions follows classically as a consequence of an Harnack inequality 
on two different cylinder shifted in time (see \cite{moser2}). Recently in \cite{PP},  \cite{CPP},  \cite{APR} the authors proved for \eqref{eq}
the boundedness of the solution, in the parabolic and Kolmogorov setting via the Moser technique.  We would like also to mention  the recent paper \cite {DH} where the authors  prove a weak Harnack inequality for solutions of rough hypoelliptic equations under rather complicated assumptions.

In order to conclude to the holderianity of the solution in the elliptic case, it is necessary to apply a Poincar\'e inequality, 
which was missing in the parabolic case. Indeed in \cite{WZ}, the authors proved a very weak and technical version of the Poincar\'e inequality for the particular class
of Kolmogorov type equations introduced in \cite{LPo}, and consequently showed $C^\alpha$ regularity only for this class of operators. 
More recently a Poincar\'e inequality which holds only on the solutions has been proved in \cite{AR19}. 
Another approach based on fractional powers of the time derivatives was proposed in \cite{NS} for parabolic operators (see also \cite{GT2} for similar ideas). In this case the authors used separate Poincaré inequalities in the different variables. On the other hand properties of the fundamental solution and mean value formulas (see \cite{LP}) are known for very general 
Kolmogorov type operators, and they have largely been used to prove the Sobolev inequality. 

\smallskip

Our goal is to give an elementary proof of the standard Poincar\'e inequality in a very general context, which contains as a particular case the operator studied in \cite{WZ} and many others.  As a consequence Moser-type estimates, we establish the desired H\"older continuity for all weak solutions of \eqref{eq}. We believe that the main contribution of our work is the unifying perspective it provides, since we use elliptic techniques in a parabolic setting.  In this way we provide an elementary proof which is new even in the parabolic case. 

\smallskip
%We first call $\nabla u =( X_1, \ldots, X_m)$ and  introduce a Sobolev space associated to the vector fields
%\begin{equation}\label{Sob}
%W^{1, 2}_{\nabla} = \{u\in L^2: \nabla u \in L^2\}. 
%\end{equation}

The definition of weak solutions reads as follows. 
\begin{definition}\label{weakSol}

A function $u$ is a weak solution of \eqref{op} in $W^{1, 2}_{\nabla} $ if it satisfies 
\begin{equation}\label{weak}
 \int u X_0 \phi \, dx- \int  \sum_{i,j=1}^m a_{i,j}(x) X_j u X_i \phi \, dx=\int \mathcal Lu\phi \, dx \quad \forall \phi \in C^\infty_0( \mathbb R^{n}), 
\end{equation}
where we denoted $\nabla u =( X_1, \ldots, X_m)$ and 
\begin{equation}\label{Sob}
W^{1, 2}_{\nabla} = \{u\in L^2: \nabla u \in L^2\}. 
\end{equation}

\end{definition}

The following result is a {\sl non uniform} Harnack inequality; more precisely only a {\sl locally uniform} Harnack inequality. The statement catches all the features of our proof, which is based on the geometry of the equation. 

\begin{theorem}\label{harnack}
Let $U_0 \subset \mathbb R^n$ be an open set for $n \geq 2$ and consider a weak solution (in the sense of Definition \ref{weakSol}) of the equation $\mathcal Lu=f \in L^p$ for some $p >Q/2 $  and where we denote $Q$ the homogeneous dimension associated to the family of vector fields $(X_0,X_1,...,X_m)$ (see Definition \ref{Vecf}). Let $U$ be such that 
\begin{itemize}
\item  $U\subset\subset U_0$ and 
\item for any given $x \in U$ there exists a family of open sets $\Omega(y,R)$ such that $\cup_{y \in B(x, 2R)}  \Omega(y,R) \subset \subset  U_0 $.
\end{itemize}
Then here exists a constant $C$, depending on $\sup_{U_{0}} u$ and $||f||_{L^p(U_0)}$  such that 
$$ \sup_{U}   u \leq C \inf_{U}  u. $$
\end{theorem}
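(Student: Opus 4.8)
The plan is to follow the classical De Giorgi–Nash–Moser scheme, but executed entirely with ``elliptic'' tools at the price of losing uniformity of constants in time. The three main ingredients are: (i) a local boundedness estimate $\sup_U u \le C\,\|u\|_{L^2(U_0)}$ obtained by Moser iteration on the equation \eqref{eq}, using the caccioppoli-type energy estimate that comes from testing \eqref{weak} with $\phi = \eta^2 u^{\beta}$ (for positive powers $\beta$) together with the Sobolev embedding associated to the homogeneous dimension $Q$; (ii) the corresponding estimate for negative powers, $\sup_U u^{-1}\le C\,\|u^{-1}\|_{L^2(U_0)}$, applied to $u$ bounded away from zero (after replacing $u$ by $u+\varepsilon$, which solves the same equation up to modifying $f$); and (iii) the key new step, an $L^1\to L^{2}$-type bridge $\|u\|_{L^2(U_0)}\cdot\|u^{-1}\|_{L^2(U_0)}\le C$, which in the elliptic theory is exactly where the Poincaré inequality for $\log u$ (the John–Nirenberg / $BMO$ argument) enters.

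Concretely, I would first record the Caccioppoli inequality: testing \eqref{weak} with $\eta^2 w$ where $w = G(u)$ is a suitable convex function of $u$, the $X_0$ term is handled by integration by parts — writing $\int u X_0(\eta^2 W'(u)) = -\int X_0(H(u))\,\eta^2 + \text{(terms in }X_0\eta)$ with $H' = W'\cdot(\cdot)$ — so that the drift contributes a ``good'' sign plus lower–order errors controlled by $|X_0\eta|$. This is precisely the point where the geometry of the homogeneous group is used: the cutoffs $\eta$ must be adapted to the boxes $\Omega(y,R)$ so that $|X_0\eta|\lesssim R^{-2}$ while $|X_i\eta|\lesssim R^{-1}$, matching the parabolic scaling. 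Combined with ellipticity of $A$ and the $L^p$ bound on $f$ (absorbed via Sobolev since $p>Q/2$), this yields $\int \eta^2 |\nabla w|^2 \lesssim R^{-2}\int_{\mathrm{supp}\,\eta} w^2 + \|f\|_{L^p}(\dots)$, and then Sobolev upgrades $L^2$ to $L^{2\kappa}$ with $\kappa = Q/(Q-2)$. Iterating over a sequence of shrinking boxes gives both the positive-power and negative-power boundedness estimates on $U$.

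For the logarithm estimate, testing \eqref{weak} with $\phi = \eta^2 u^{-1}$ gives $\int \eta^2 |\nabla \log u|^2 \lesssim \int |\nabla \eta|^2 + \int \eta^2 |X_0 \log u|\cdots$; the drift term now forces $X_0(\log u)$ to be controlled in $L^1$, and here I would invoke the parabolic Poincaré inequality promised in the abstract (stated earlier in the paper and hence available to me) to conclude that $\log u$ has bounded mean oscillation on the relevant family of boxes, which via the John–Nirenberg inequality (valid in the homogeneous-group setting) gives $\fint_{U} e^{c\log u}\cdot \fint_{U} e^{-c\log u}\le C$, i.e. the desired $L^1$–$L^{-1}$ bridge after matching exponents through a second Moser iteration on the intermediate range. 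Chaining (i), (iii), (ii) then yields $\sup_U u \le C\,\|u\|_{L^2}\le C\,\|u^{-1}\|_{L^2}^{-1}\cdot C \le C\,\inf_U u$, with the constant depending on $\sup_{U_0} u$ (entering through the $\varepsilon$-regularization and the normalization in John–Nirenberg) and on $\|f\|_{L^p(U_0)}$.

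The main obstacle — and the reason uniformity in time is lost — is step (iii): the family of boxes $\Omega(y,R)$ over which the Poincaré/$BMO$ argument is run must be chained together to connect an arbitrary pair of points of $U$, and in the ultraparabolic (Kolmogorov) case the drift $X_0$ breaks time-reversibility, so the chain can only be built in one time direction. This is exactly why the hypothesis in the theorem requires $\cup_{y\in B(x,2R)}\Omega(y,R)\subset\subset U_0$: it guarantees enough room to carry out the chaining, but the number of boxes needed (hence the constant $C$) depends on the geometric position of $U$ inside $U_0$, giving only a \emph{locally uniform} Harnack inequality rather than a scale-invariant one. I expect the bookkeeping of this chaining, and the verification that the $\varepsilon\to 0$ limit is harmless, to be the delicate parts; the Moser iterations themselves are routine once the Caccioppoli inequality with the correct parabolic scaling of cutoffs is in place.
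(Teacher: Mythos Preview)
Your overall architecture --- Caccioppoli for powers of $u$, Sobolev to gain integrability, Moser iteration for $\sup$ and $\inf$, and a $\log u$/John--Nirenberg bridge --- is exactly the scheme the paper runs. Two points, however, do not match the paper and the first one is a genuine gap in your sketch.

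\textbf{The Sobolev step is not the standard one.} You write ``Sobolev upgrades $L^2$ to $L^{2\kappa}$ with $\kappa=Q/(Q-2)$'' as if the subelliptic embedding $\|v\|_{L^{2Q/(Q-2)}}\le C\|\nabla v\|_{L^2}$ were available. It is not: here $\nabla=(X_1,\dots,X_m)$ and by Definition~\ref{Vecf} these fields need $X_0$ to generate the tangent space, so controlling the horizontal gradient alone gives no gain of integrability in the $X_0$ direction. The paper's substitute (inequalities \eqref{Sobolev3} and \eqref{notsure}) is obtained by representing $u$ against the fundamental solution of the constant-coefficient operator $\tilde{\mathcal L}$, and it carries an extra term $\|\mathcal L u\|_{L^1}$ (equivalently $\|u^{2k-1}\phi^2\mathcal L u\|_{L^1}$) on the right-hand side. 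Only after combining this with the Caccioppoli estimate and absorbing the $f$-term via $p>Q/2$ does one obtain the clean iteration inequality (Proposition~\ref{CY}). If you run your iteration with the ordinary Sobolev embedding, it simply fails in the Kolmogorov case.

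\textbf{Where the dependence on $\sup_{U_0}u$ actually enters.} It is not through chaining or an $\varepsilon$-regularisation; the paper never chains. After integrating the $X_0$ term by parts there is no ``good sign'': for $w=\log\tilde u$ the Caccioppoli inequality (Proposition~\ref{iteratedCacciopoli} with $k=0$) reads
\[
\int|\nabla(w\phi)|^2\le C\int\Big(\phi^2+\tfrac{|\mathcal Lu|}{M_0}\phi^2+|\log\tilde u|\,\phi|X_0\phi|+|\nabla\phi|^2\Big),
\]
and the term $|\log\tilde u|\,\phi|X_0\phi|$ cannot be absorbed. The paper takes $M_0=\max(\|f\|_{L^p},\,2\sup|u|)$ and $\tilde u=u+M_0$, so that $\log\tilde u$ is \emph{a priori} bounded; this is the sole origin of the $\sup_{U_0}u$ in the constant. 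With that bound in hand one verifies the hypotheses \eqref{JN1}--\eqref{JN2} of the John--Nirenberg Proposition~\ref{JN} for $w$ (note that \eqref{JN2} again asks for control of $\mathcal L w$, consistent with the modified Sobolev/Poincar\'e), and the bridge $\big(\int\tilde u^{\,p}\big)\big(\int\tilde u^{-p}\big)\le C$ follows. Your reading of the hypothesis $\bigcup_{y\in B(x,2R)}\Omega(y,R)\subset\subset U_0$ as room for a Harnack chain is off: it is there so that the pointwise Poincar\'e (Proposition~\ref{pointPoinc}) and the John--Nirenberg argument can be run on the sets $V_R$ of \eqref{Verre}, not to connect points in time.
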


The previous statement captures the geometry of the operator $\mathcal L$ via the homogeneous dimension $Q$, the Carnot-Caratheodory balls $B(x,R)$ and the family $\Omega(x,R)$ which will be taken to be the  $R$-sublevel set of the fundamental solution for a constant coefficient operator built out of $\mathcal L$. As in \cite{GT} (page 200), it leads classically 

\begin{corollary} 
Under the assumptions of Theorem \ref{harnack}, any weak solution of \eqref{eq} is locally H\"older continuous.  
\end{corollary}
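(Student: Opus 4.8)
The plan is to run the classical oscillation-decay argument of De Giorgi–Moser (as in Gilbarg–Trudinger, page 200), being careful that the Harnack constant here is only *locally uniform*. Fix a point $x_0 \in \Omega$ and a small radius $R_0$ such that the enlarged sets required in Theorem~\ref{harnack} — namely $\bigcup_{y\in B(x,2R)}\Omega(y,R)$ for all $x$ in a fixed neighbourhood $U$ of $x_0$ and all $R\le R_0$ — are contained in a fixed compact $U_0\subset\subset\Omega$. The key point is that on such a fixed $U_0$ the constant $C$ in Theorem~\ref{harnack} is a genuine constant (depending on $\sup_{U_0}|u|$ and $\|f\|_{L^p(U_0)}$ but not on the center or the scale), so the iteration closes.

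First I would set, for $r\le R_0$, $M(r)=\sup_{B(x_0,r)}u$ and $m(r)=\inf_{B(x_0,r)}u$, and $\omega(r)=M(r)-m(r)$ the oscillation. Apply Theorem~\ref{harnack} to the two nonnegative weak solutions $v_1 = M(2r)-u$ and $v_2 = u-m(2r)$ on the ball $B(x_0,2r)$, taking $U = B(x_0,r)$: each satisfies $\mathcal L v_i = \mp f$ (the right-hand side changes only by sign and a bounded additive constant which is killed by $X_0$, since $X_0$ annihilates constants and the diffusion part is unaffected), with $\|f\|_{L^p}$ controlled on $U_0$. The Harnack inequality gives
\begin{align}
M(2r)-m(r) &\le C\bigl(M(2r)-M(r)\bigr) + C r^{\delta}, \\
M(r)-m(2r) &\le C\bigl(m(r)-m(2r)\bigr) + C r^{\delta},
\end{align}
where the extra $C r^{\delta}$ term absorbs the contribution of $\|f\|_{L^p}$ via Hölder's inequality and the condition $p>Q/2$ (this produces a positive power of $r$ because $p>Q/2$ gives room in the scaling). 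Adding the two inequalities and rearranging in the standard way yields $\omega(r) \le \theta\,\omega(2r) + C r^{\delta}$ with $\theta = \frac{C-1}{C+1}<1$.

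Then I would invoke the elementary iteration lemma (Gilbarg–Trudinger Lemma 8.23, or the dyadic version): an inequality $\omega(r)\le \theta\,\omega(2r)+C r^{\delta}$ with $\theta<1$ forces $\omega(r)\le C' r^{\alpha}$ for some $\alpha = \alpha(\theta,\delta,Q)>0$ and all $r\le R_0$, which is exactly local Hölder continuity of $u$ at $x_0$ with a uniform exponent and a constant depending only on the fixed data on $U_0$. Since $x_0\in\Omega$ was arbitrary and the construction is uniform on compact subsets, $u\in C^{0,\alpha}_{\mathrm{loc}}(\Omega)$. The main obstacle — and the only place where real care is needed — is the bookkeeping of the geometric hypotheses of Theorem~\ref{harnack}: one must verify that for a \emph{fixed} neighbourhood of $x_0$ and \emph{all} sufficiently small scales $R$, the auxiliary sets $\Omega(y,R)$ (the $R$-sublevel sets of the frozen fundamental solution) together with the Carnot–Carathéodory balls stay inside one fixed $U_0\subset\subset\Omega$, so that the Harnack constant does not degenerate as $r\to 0$; this uses the fact that these sets shrink to the center as $R\to 0$, which follows from their definition via the homogeneous structure.
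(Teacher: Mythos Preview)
Your proposal is correct and is precisely the approach the paper indicates: the paper does not actually give a proof of the Corollary but simply writes ``As in \cite{GT} (page 200), it leads classically'' to H\"older continuity, and your oscillation-decay argument is exactly the content of that reference.

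Two small remarks on the execution. First, Theorem~\ref{harnack} as stated is purely multiplicative, $\sup_U u \le C\inf_U u$, with the $f$-dependence already absorbed into the constant $C$; the additive $Cr^\delta$ term you introduce is the more standard Gilbarg--Trudinger form but is not what the paper's statement literally gives. Using the paper's version directly on $v_1,v_2$ yields the cleaner recursion $\omega(r)\le \tfrac{C-1}{C+1}\,\omega(2r)$ with no inhomogeneous term. Second, your description of why the constant is uniform in $r$ is slightly off: you cannot apply the Harnack to $v_1=M(2r)-u$ on the \emph{fixed} $U_0$, since $v_1$ is only nonnegative on $B(x_0,2r)$. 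One must take the ``$U_0$'' in each application to be $B(x_0,2r)$ itself; uniformity of $C$ then comes from the monotone dependence on $\sup_{B(x_0,2r)}v_i=\omega(2r)\le \omega(2R_0)$ and $\|f\|_{L^p(B(x_0,2r))}\le \|f\|_{L^p(B(x_0,2R_0))}$, together with the geometric containment $\bigcup_{y\in B(x,2R)}\Omega(y,R)\subset B(x_0,2r)$ for $R$ comparable to $r$, which holds by homogeneity of the level sets $\Omega(y,R)$.
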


It is important to remark that we prove a locally uniform Harnack inequality:  we fix an open set $U_0$ and 
for every open set $U\subset\subset U_0$ we obtain an Harnack inequality with constants 
depending on $||u||_{L^2(U_0)}$ (or as stated $\sup_{U_0} u$). This does not contradict the counter example of Moser for parabolic equations. 
Indeed if $u$ is the fundamental solution of the heat equation, there is no constant $C$ such that 
$ \sup u \leq C \inf  u $ on the exterior of a compact set; however, if we fix an open set $U_0$ and we 
allow the constant to depend on the $||u||_{L^2(U_0)}$, the Harnack inequality is satisfied.

Theorem \ref{harnack} holds on sets of the form
$$V_{R}=\cup_{y \in V}  \Omega(y,R) \subset \subset  U_0$$
for some sets $V$. In the classical elliptic case, the set $V$ can be chosen as ball and as well as all the sets $ \Omega(y,R)$. In the classical parabolic setting, if $V$ is a level set of the fundamental solution of the heat operator, then $V_R$ will not be a level set. To be able to match the upper and lower bounds on $u$, the usual choice for $V$ is a parabolic cylinder and in this case  $V_{R}$ is a cylinder, of double radius. In this way the structure of the sets is preserved leading to a scale invariant Harnack inequality. Here we loose the invariance property of the inequality; this is why we allow $V$ and $V_R$ to be completely different sets.

The Moser iteration (as well as the De Giorgi method) requires the use of Poincar\'e and Sobolev inequalities. By reducing to a constant coefficient operator $\tilde{\mathcal L}$ associated to $\mathcal L$, one can obtain a Sobolev inequality out of its fundamental solution. This leads to a standard  local Poincar\'e type inequality for general vector fields directly from the Sobolev inequality, with an extremely simple and direct proof. 

We define
\begin{equation}\label{op_constant}
\tilde{\mathcal L}u
=- X_{0}u + \sum_{i=1}^m  X^2_{i} u .
\end{equation}
Such an operator has always a fundamental solution since it has constant coefficients.

\begin{lemma}\label{poincareIntro}
 Let $U \subset \R^n$ be a bounded connected, open set  with $C^1$ boundary, and let $V\subset U$. The following hold:
 \begin{enumerate}
 \item There exists a constant $C$ depending only on $n, p, U,V$ such that  
\begin{equation}\label{poinc1Intro}
|| u - u_V||_{L^1(U)}\leq C diam(U)  (diam(U) ||\mathcal Lu||_{L^1(U)} +||\nabla u||_{L^1(U)}) 
\end{equation}
\item If the vector field $X_0$ commutes with all the vector fields $X_i$, we have
\begin{equation}\label{poinc2Intro}
|| u - u_V||_{L^2(U)}\leq C diam(U) ( ||X_0^{1/2} u||_{L^2(U)} + ||\nabla u||_{L^2(U)}) ,
\end{equation}
\end{enumerate}

In the previous inequalities $\nabla$ stands for the horizontal gradient introduced before (see Definition \ref{weakSol}). 
\end{lemma}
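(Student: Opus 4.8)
The plan is to derive both Poincaré inequalities from a representation formula built out of the fundamental solution $\Gamma$ of the constant-coefficient operator $\tilde{\mathcal L}$ defined in \eqref{op_constant}. Since $\tilde{\mathcal L}$ has constant coefficients and is hypoelliptic (the vector fields span a homogeneous group), it admits a fundamental solution $\Gamma(x,y)$ with the standard homogeneity and the Sobolev-type bounds; I would first record that $\Gamma$ and its horizontal derivatives $X_i^{(x)}\Gamma(\cdot,y)$ are locally integrable with quantitative bounds in terms of $\mathrm{diam}(U)$. The representation formula then reads, for $u$ smooth and $x\in U$,
\[
u(x) = \int_U \Gamma(x,y)\,\tilde{\mathcal L}u(y)\,dy + \text{(boundary terms)} = \int_U \Gamma(x,y)\Big(\mathcal Lu(y) + \sum_{i,j}X_i\big((a_{ij}-\delta_{ij})X_j u\big)(y)\Big)\,dy + \cdots,
\]
but it is cleaner to avoid the coefficient matrix entirely: apply the representation formula for $\tilde{\mathcal L}$ directly to $u$, writing $\tilde{\mathcal L}u = -X_0 u + \sum X_i^2 u$, and then integrate by parts in the $\sum X_i^2 u$ term to move one derivative onto $\Gamma$, producing a kernel $\sum_i X_i^{(x)}\Gamma(x,y)$ against $\nabla u$, plus the term $\Gamma(x,y)X_0 u(y)$ which we keep or further manipulate.

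For part (1), I would use the first-order representation $u(x) - u_V = \int_V \big(u(x)-u(z)\big)\,d\mu(z)$ where $\mu$ is normalized Lebesgue measure on $V$, and express $u(x)-u(z)$ via the fundamental solution of $\tilde{\mathcal L}$ on a fixed bounded domain containing $U$, picking up an interior kernel $K(x,z,y)$ acting on $\tilde{\mathcal L}u$ and a kernel acting on $\nabla u$ after one integration by parts. The key estimates are $\int_U \int_V |K_0(x,z,y)|\,d\mu(z)\,dx \le C\,\mathrm{diam}(U)^2$ for the piece hitting $X_0 u$ absorbed into $\mathcal Lu$ after noting $\tilde{\mathcal L}u = \mathcal L u + \sum X_i((\delta_{ij}-a_{ij})X_j u)$, hence contributing both a $\|\mathcal L u\|_{L^1}$ term at order $\mathrm{diam}(U)^2$ and a $\|\nabla u\|_{L^1}$ term at order $\mathrm{diam}(U)$, and similarly $\int_U\int_V |K_1(x,z,y)|\,d\mu(z)\,dx \le C\,\mathrm{diam}(U)$ for the piece hitting $\nabla u$; these follow from the homogeneity degree of $\Gamma$ (degree $2-Q$) and of $X_i\Gamma$ (degree $1-Q$) together with the doubling property of the Carnot--Carathéodory balls. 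Then Tonelli's theorem and a standard approximation argument (density of smooth functions in $W^{1,2}_\nabla$) give \eqref{poinc1Intro}.

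For part (2), the hypothesis that $X_0$ commutes with all $X_i$ means the group generated by $X_1,\dots,X_m$ alone is already homogeneous and $X_0$ acts like an extra "time" direction that can be handled by a fractional-derivative device: one writes $X_0 u = X_0^{1/2}(X_0^{1/2}u)$ and, in the representation formula, integrates by parts in the $X_0$-direction so that the kernel carries $X_0^{1/2}\Gamma$ — which is again locally integrable with the right homogeneity because commutativity lets $X_0^{1/2}$ act cleanly on the convolution kernel — against $X_0^{1/2}u$. Combined with the $L^2$-boundedness of the resulting integral operators (Schur test against the homogeneous kernels, again using doubling), one gets the bound by $\mathrm{diam}(U)(\|X_0^{1/2}u\|_{L^2(U)} + \|\nabla u\|_{L^2(U)})$.

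The main obstacle I anticipate is making the representation formula rigorous near $\partial U$: either one works on a fixed larger domain with a cutoff and controls the commutator terms, or one uses that the constant $C$ is allowed to depend on $U$ and $V$ (not just $\mathrm{diam}(U)$) to absorb boundary contributions via the $C^1$ regularity of $\partial U$ and a Poincaré–Wirtinger-type argument on the reference domain — the dependence on $U,V$ in the statement signals exactly that this is the intended route. A secondary technical point is handling $X_0^{1/2}$ in part (2): the commutativity assumption is precisely what makes $X_0^{1/2}$ commute with the spatial convolution structure, so the fractional operator applied to $\Gamma$ produces a kernel with the expected singularity; verifying the $L^2 \to L^2$ mapping properties of this kernel is the computational heart of that case.
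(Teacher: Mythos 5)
Your route is genuinely different from the paper's: the paper proves \eqref{poinc1Intro} by the classical compactness/contradiction argument (normalize $v_k=(u_k-(u_k)_V)/\|u_k-(u_k)_V\|_{L^1(U)}$, extract a limit $v$ with $\|\nabla v\|_{L^1}+\|\mathcal L v\|_{L^1}=0$, conclude $v$ is constant and hence zero, contradiction), with the two factors of $\mathrm{diam}(U)$ restored afterwards by composing with the intrinsic dilations; the representation formula via $\tilde\Gamma$ is used in the paper only for the Sobolev inequalities and for the \emph{pointwise} Poincar\'e estimate of Proposition \ref{pointPoinc}, not for the integral Poincar\'e inequality itself. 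Your proposal inverts this order, and that is where it breaks down.

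The gap is the one you flag yourself but do not resolve: there is no usable two-point representation $u(x)-u(z)=\int K_0(x,z,y)\,\tilde{\mathcal L}u(y)\,dy$ for $u$ merely defined on $U$. The identity $u(x)=-\int\tilde\Gamma(x,y)\tilde{\mathcal L}u(y)\,dy$ holds only for compactly supported $u$; on a bounded domain Green's identity produces boundary integrals involving the trace of $u$ itself, which are not controlled by $\|\mathcal L u\|_{L^1}$ and $\|\nabla u\|_{L^1}$, and inserting a cutoff $\phi$ instead produces a term $\int\tilde\Gamma(x,y)\,u(y)\tilde{\mathcal L}\phi(y)\,dy$, i.e.\ an uncontrolled $\|u\|_{L^1(U)}$ on the right-hand side, which is fatal for a Poincar\'e inequality. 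The only boundary-free local formula available here is the Lanconelli--Pascucci mean value formula (Proposition \ref{repL}), but it controls $u(x)-u_{\Gamma,\Omega(x,R)}$ with a mean over the level set \emph{centered at $x$}; passing from that varying-center weighted mean to the fixed mean $u_V$ is precisely the step for which the paper invokes its compactness-based Poincar\'e inequality (this is how Proposition \ref{pointPoinc} is proved), so your kernel scheme is circular unless you supply an independent chaining argument over overlapping level sets. Your stated fallback (``a Poincar\'e--Wirtinger-type argument on the reference domain'') is in effect the paper's actual proof, so the honest conclusion is that the compactness step cannot be avoided by your method as written. The same structural objection applies to your treatment of part (2); there your idea of writing $X_0u=X_0^{1/2}(X_0^{1/2}u)$ and moving one half-derivative onto the kernel via $\partial_t=D_t^{1/2}H_tD_t^{1/2}$ does coincide with how the paper proves the parabolic Sobolev inequality \eqref{Sobolev1parab}, but again only for compactly supported functions, after which the Poincar\'e statement still requires the compactness argument.
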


The inequality \eqref{poinc2Intro} was already introduced in \cite{NS}. It is important to point out that our method uses Moser's approach. Aiming at unifying both the elliptic and parabolic arguments in the Moser iterations, we actually fully use the geometry of the equation induced by the vector fields $(X_0,X_1,...,X_m)$. Several operators are included in our framework. As far as H\"older regularity is concerned, we point out also many recent results for kinetic equations (as Landau, Kolmogorov, Kolmogorov-Fokker-Planck) where the authors aim at proving H\"older regularity using the De Giorgi approach. The weak Harnack inequality has been for instance in the recent paper by Guerand and Mouhot \cite{GM};  De Giorgi's method has been also implemented in e.g. \cite{guerandImbert, golse}. We refer the reader to these papers and references therein and also the survey by Mouhot \cite{mouhotICM}.

\section{Preliminaries and known results}
. 

\subsection{Properties of vector fields }
We define 

\begin{definition}\label{Vecf}
We say that $(X_0, \ldots, X_m)$ generate a stratified structure of type $2$ and step $s$ if 
the tangent space admits the following decomposition 
\begin{equation}\label{tangentgenerate}T \R^n_{|p} = {V_1}_{|p} \oplus {V_2}_{|p}\oplus {V_s}_{|p}\quad \forall p \in \mathbb R^n
\end{equation}
where 
$$V_1 = span (X_1, \cdots, X_m), \quad V_2 = span (X_0, [V_1, V_1]), $$
$$ V_{j} = [V_{j-1}, V_1] \text{ for }j=3, \ldots, s-1, \quad  [V_1,V_s]=0.$$
\end{definition}

The stratification induces a natural notion of 
degree of a vector field:  
\begin{equation}\label{degree}\text{deg}(X)=j \quad\text {whenever}\; X\in V_j.\end{equation}

We will choose a stratified basis of the tangent space as follows. We will  complete the family $X_0, X_1, \cdots X_{m}$ to a basis of $V_2$, then complete it to a basis of $V_3$ and so on. Since the exponential mapping is a global isomorphism, we can assume that the coordinates of the space are the exponential ones, defined (see \cite{RS}) as 
\begin{equation}\label{chv} 
x = exp(\sum_{i=1}^{n} a_i X_{i-1} )\end{equation}
We  note that in the parabolic and Kolmogorov settings, $X_0 $ and $V_1$ are linearly independent; 
while if they are linearly dependent, then the operator is subelliptic. 
It is possible to find a change of variable in which $X_0$ is represented as the operator $\partial_t$, but the coefficients $X_i$ could potentially depend on $t$. However they will be independent of $t$ only in case that 
$X_0$  commutes with $X_1, \cdots, X_m$, which is not true in general. 

\begin{remark}\label{tnotation}
If 
$X_0$  commutes with $X_1, \cdots, X_m$, we will be in the parabolic case.  With the change of variable \eqref{chv} 
the vector $X_0$ will reduce to a partial derivative $\partial_{1}$. We will sometimes denote it $\partial_t$, and correspondingly we will call $t=x_1$, $\hat x= (x_2, \cdots, x_n)$  and the point of the space will be denoted $x=(t, \hat x)$ to clarify the parabolic nature of the problem. 
\end{remark}

In addition the conditions imposed on the vector fields ensure 
that via the exponential map, 
$\R^{n}$ is endowed with a homogeneous Lie  group structure 
and the resulting group is denoted by $\mathbb{G}$ (see for example \cite{BLU})
$$x \circ y = \exp \Big( \Phi(x) * \Phi(y)\Big) \, \forall x, y\in \mathbb{G}.$$
    Due to the stratification of the algebra, we can define a natural family of dilation  $(\delta_\lambda)_{\lambda>0}$ as follows:
for any $\delta > 0$, we define dilations on the vector fields generated by $(X_i)_{i=0}^m$ as 
\begin{equation}\label{dilation}
 \quad  \delta_\lambda (X_i) = \lambda X_i, \quad \delta_\lambda (X_0) = \lambda^2 X_0,\end{equation}
Due to the stratification, the dilation is a Lie algebra automorphism, namely,  
$$\delta_\lambda ([X; Y ]) = [\delta_\lambda X, \delta_\lambda Y]  \quad \forall
X, Y.$$
As a consequence, for every $\lambda$ the exponential map induces an automorphism 
on the group.

The braket generating condition we imposed in \eqref{tangentgenerate}, ensures that the vector fields $(X_i)_{i=0}^m$
satisfy the H\"ormander condition. Hence there is a Carnot-Carathéodory distance associated to these vector fields and denoted $d$. 
The ball $B(x, r)$  of the metric can be estimated in terms of the homogeneous dimension of the space, defined as 
\begin{equation}\label{homodim}
Q:= \sum_{i=1}^m i \, dim(V_i). \end{equation}
Indeed 
there exist constants $C_1, C_2$ such that 
$$
C_1 r^Q \leq |B(x,r)|\leq C_2 r^Q \qquad\forall\, r>0, \ x\in \mathbb G,$$
where $|\,\cdot\,|$ denotes the Lebesgue measure.

The associated classes of H\"older continuous functions 
will be defined as follows:
\begin{definition}\label{defholder}
Let $0<\alpha < 1$, $V\subset \mathbb{G}$ be an open set, and  $u$ be a function defined on
$V.$ We say that $u \in C^{\alpha}(V)$ if there exists a positive constant $M$ such that for
every $x, x_0\in V$ 
\begin{equation}\label{e301}
   |u(x) - u(x_0)| \le M   d ^\alpha(x, x_{0}).
\end{equation}
We denote 
 $$\|u\|_{C^{\alpha}(V)}=\sup_{x\neq x_{0}} \frac {|u(x) - u(x_{0})|}{d^\alpha(x, x_{0})}+ 
\sup_{V} |u|.$$
\end{definition}

\subsection{Derivatives and Fractional derivatives}

If $\phi$ is a continuous function defined in an open set $V$ of $\mathbb{G}$ and if, for every $i=1,\ldots, m,$ there exists the Lie derivative 
$X_{i}\phi$ then we  call horizontal gradient of $\phi$ the vector
\begin{equation}\label{e:tutto subriemannian}
	\nabla \phi=\sum_{i=1}^m (X_{i}\phi) X_{i}.
\end{equation} 
We explicitly note that the vector $X_0$ does not show up here, since it represents a vector field of degree 2 (with respect to dilations). 
In \eqref{Sob} we introduced the Sobolev space $W^{1, 2}_{\nabla}$, 
which is degenerate since it does not contain increments in the direction $t$. The simplest case, just to understand the structure of the space would be to consider the space 
$\{u \in L^2\, :\,  \partial_{x_i} u \in L^2, i=1, \ldots, n-1\} .$
It is clear that we do not control the derivative in the last direction. To bypass this issue, we introduce a suitable Sobolev space taking into account the variable $t$.  We define

\begin{definition}
Assume $u \in \mathcal S(\mathbb R)$ the Schwartz space on $\R$. Then we define the operator $D_t^{1/2}$ by 
$$ 
 D^{\frac 12}_tu(t)=-\frac 1{2\sqrt {2\pi}}\int_{\mathbb R} \frac {u(t)-u(s)}{|t-s|^{\frac 32}}ds.$$
\end{definition}

\begin{remark}\label {homhalfderivative}
Formally the (half-)derivative $D_t^{1/2}$ has degree 1. The previous definition is the standard definition of the square-root of the second order derivative and can be found in the classical book of Landkof \cite{landkof}. It has also been instrumental (and furthermore investigated) in \cite{AEN,BGMN}. 
\end{remark}

This previous  definition can be extended to functions $v$ such that for
 every $\phi\in \mathcal S(\mathbb R)$ 
the product $vD^{\frac 12}_t\phi\in L^1(\mathbb R)$:
in this case we say that $D^{\frac 12}_t v= g \in L^2(\mathbb R)$  
  if 
$$\int  vD^{\frac 12}_t\phi dt=\int g\phi dt$$
 for
 every $\phi\in \mathcal S(\mathbb R)$. 
The definition coincides with the previous one in the special case that $v\in \mathcal S(\mathbb R)$ (see e.g. \cite {AEN,BGMN}). It is then classical to define the Sobolev space: let $\dot H^{\frac 12}(\mathbb R^{})$  
 the space which consists of all $v\in L^2_{loc (\mathbb R)}\cap \mathcal S'(\mathbb R)$  with norm 
$$||v||_{\dot H^{\frac 12}(\mathbb R^{})}= ||D^{\frac 12}_t v||_{L^2(\mathbb R^{})}.$$
For $v\in L^2{(\mathbb R)}$  we have  
$L^2(\mathbb R)\cap  \dot H^{\frac 12}(\mathbb R)=  H^{\frac 12}(\mathbb R)$   the usual fractional Sobolev space.

An important property of the derivative $D^{1/2}_t$ can be described by the Hilbert transform $H(t)$, the Calder\'on-Zygmund operator of symbol  $i\,\frac{\eta}{|\eta|}$. Then the following relation holds (see  \cite {AEN,BGMN})
$$\partial_t=D^{\frac 12}_t\, H_t\, D^{\frac 12}_t. $$

Note that,  
for any $u\in   \dot H^{\frac 12}(\mathbb R^{})$ and $\phi \in C^\infty_0 (\mathbb R^{})$
$$\int_{\mathbb R^{}} H_t\, D^{\frac 12}_tu\,    \overline{ D^{\frac 12}_t \phi}\, dt
=- \int_{\mathbb R^{}} u \,  \overline{ \partial_t \phi}\, dt.$$

\begin{remark}
Note that the half-order time derivative $D^{1/2}_t $ can be defined by the Fourier
multiplier $\tau \to |\tau|$, while the Hilbert transform $H(t)$ corresponds to the 
Fourier
multiplier $\tau \to i sgn(\tau)$. For this reason, we have
\begin{equation}\label{DHcomm}
D^{1/2}_t H(t) =  H(t)D^{1/2}_t.
\end{equation}
 For the same reason if $X_0$ commutes with $X_i$ - which means that 
we need to be in a parabolic (sub-Riemannian) setting, 
$H(t) $ commutes with any 
derivative $X_i$, $i=1, \ldots, m$: 
$$X_iH(t) =  H(t)X_i.$$
\end{remark}

Let us also recall that in a group structure there is a natural mollifier, defined as follows
\begin{definition}\label{moll}
Let $\psi$ be a smooth function such that 
$$\int \psi =1, \quad 0\leq \psi \leq1, $$
and for every function $f$ we will define 
$$f_\epsilon (x)  = \int f( \delta_\epsilon z \circ x) \psi (z) dz = \int f( y) \psi (\delta_{\epsilon^{-1}}|| x \circ y^{-1}||) \frac{dy}{\epsilon^Q}.$$

\end{definition}

Let us explicitly note that, due to the left invariance of the vector fields the following property holds
\begin{equation}\label{Xfe}
X_i f_\epsilon   = (X_i f)_\epsilon . 
\end{equation}

Note also that the operator of mollification is self adjoint, since 
\begin{equation}\label{feg}
\int f_\epsilon(x) g (x)  dx = \int g(x) \int f( y) \psi (\delta_{\epsilon^{-1}}|| x \circ y^{-1}||) \frac{dy}{\epsilon^Q}dx =\int f(x) g_\epsilon (x)  dx
\end{equation}
and the norm is symmetric.

\subsection{Fundamental solution and representation formulas }

The operator $\mathcal L$ defined in \eqref{op} is expressed in terms of a matrix $A$ uniformly positive. 
 Precisely there 
exists a constant $\nu$ such that 
\begin{equation} \label{A}
\nu|\xi|^2 \leq \sum_{i,j=1}^m a_{i,j}(x) \xi_i \xi_j \leq \nu^{-1 }|\xi|^2 
\end{equation}
for every $\xi\in \R^n$ and every $x\in \Omega$. Since the coefficients are only measurable, in general this operator will not have a fundamental solution, and we will use the fundamental solution  $\tilde \Gamma$ of the operator $\tilde{\mathcal L}$ introduced in \eqref{op_constant} instead. We list here some of the properties 
which we will need in the sequel

\begin{itemize}

\item[(i)] $\tilde \Gamma$ is a nonnegative function which is smooth away from the diagonal of $\R^n\times\R^n$;

\item[(ii)] $\tilde \Gamma(\cdot, y)$ and $\tilde \Gamma(x, \cdot)$ are locally integrable;

\item[(iii)] for every $u\in C^\infty_0(\R^n)$
$$u(\cdot)=-\int_{\R^n}\tilde \Gamma(\cdot, y)\,\tilde{\mathcal L}u(y)\,dy;
$$

\item[(iv)]  if $Q$ is the homogeneous dimension, the following estimates hold
\begin{equation}\label{estimGamma}
\tilde \Gamma(y, x)\leq c\,  d^{2-Q}(y, x), \quad |\nabla\tilde \Gamma (y, x)| \leq  c\, d^{1-Q}(y, x). 
\end{equation}
Moreover, by Remark \ref {homhalfderivative} we also have
$$ |D_t^{1/2}\tilde \Gamma(y, x)| \leq  c\, d^{1-Q}(y, x). $$
\end{itemize}

In the elliptic case the level sets of the fundamental solution are equivalent to the sphere of the metric. 
It is clear that this is not the case for parabolic or Kolmogorov type operators; however the 
level sets of  the fundamental solution play a crucial role for mean value formulas.

For this reason we introduce the following notations. For every $R>0$ and $x\in\R^n$,  let
$\Omega(x,R)$ denote a $R$-sublevel set of the fundamental solution $\tilde \Gamma$
$$\Omega(x, R):=\left \{ y\in \R^n | \ \tilde \Gamma(x ,y)>R^{2-Q}\right \}.$$

For every $R>0$ the set $\Omega(x,R)$ is a bounded nonempty set and for almost every $R>0$, the  set $\partial \Omega(x, R)$ is a $(n-1)$-dimensional $C^\infty$ manifold.

We will now introduce two definitions of average. We will denote the standard mean value on a set $U$ as follows 
\begin{equation}\label{defmean}
u_{U}:=\frac{1}{|U|}\int_{U}u(y)\, dy.
\end{equation} 

Solutions of the equation $\tilde {\mathcal L} u =f$ admit a natural representation in term of spatial mean value formula on the 
level sets of the fundamental solution. Precisely we define
\begin{equation}\label{defmean2}
 u_{\Gamma, \Omega(x,R)}:=\frac {Q}{(Q-2)
 R^Q}\int_{\Omega(x,R)} \frac{ |\nabla
 \tilde\Gamma(x,y)|^2}
{\tilde\Gamma^{2(Q-1)/(Q-2)}(x,y)}\,u (y)
\,dy
\end{equation}

\medskip

We recall the representation formula proved by Lanconelli and Pascucci in \cite{LP} (see Theorem 1.5). 

\begin{proposition}\label{repL}
The following holds
\begin{equation*}
 u(x)= u_{\Gamma, \Omega(x,R)}
-
\frac {Q}{R^Q}\int_0^{R} r^{Q-1}
\int_{\Omega(x,r)} \Big(\tilde\Gamma(x,y) - \frac{1}{r^{Q-2}}\Big) \tilde{ \mathcal L} u(y) \,dyd r. 
\end{equation*}
\end{proposition}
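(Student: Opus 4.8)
The plan is to derive the representation formula of Proposition \ref{repL} from the spherical-type mean value structure of the constant coefficient operator $\tilde{\mathcal L}$, exactly as one derives the classical mean value formula for the Laplacian from Green's identity, but adapted to the level sets of $\tilde\Gamma$. First I would fix $x$ and $r>0$, and apply the reproduction property (iii), $u(x)=-\int \tilde\Gamma(x,y)\tilde{\mathcal L}u(y)\,dy$ (initially for $u\in C^\infty_0$, then by density and the integrability in (ii) for the class of $u$ we need). The key object is the ``solid average'' obtained by integrating the surface averages over the one-parameter family $\{\Omega(x,r)\}_{0<r<R}$ against the weight $Q r^{Q-1}/R^Q$, which is a probability density on $(0,R)$. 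So the strategy is: (1) establish a \emph{surface} mean value identity on each $\partial\Omega(x,r)$; (2) integrate in $r$ with the stated weight; (3) recognize the resulting weighted solid integral as $u_{\Gamma,\Omega(x,R)}$ in \eqref{defmean2} plus an explicit remainder involving $\tilde{\mathcal L}u$.

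For step (1), the natural route is the coarea formula together with Green's identity for $\tilde{\mathcal L}$. Writing $\Omega(x,r)=\{\tilde\Gamma(x,\cdot)>r^{2-Q}\}$, the boundary $\partial\Omega(x,r)$ is a level set of $\tilde\Gamma(x,\cdot)$, so its outer normal is $-\nabla^E\tilde\Gamma/|\nabla^E\tilde\Gamma|$ (Euclidean gradient, since $\tilde{\mathcal L}$ has a divergence-form principal part). Applying the divergence theorem to $\mathrm{div}\big(\tilde\Gamma\,\nabla^E u - u\,\nabla^E\tilde\Gamma\big)$ on $\Omega(x,r)$ — and being careful with the $X_0$ term, which contributes a transport term that must be tracked — one gets, after subtracting the constant value $r^{2-Q}$ of $\tilde\Gamma$ on the boundary (which is permissible because $\int_{\partial\Omega(x,r)}\partial_\nu u=\int_{\Omega(x,r)}\Delta_{\mathcal L}u$ up to lower order), an identity of the form
\begin{equation*}
\int_{\partial\Omega(x,r)} u\,\frac{|\nabla^E\tilde\Gamma(x,y)|}{\,\cdots\,}\,d\sigma(y)
= (\text{const})\,u(x) + \int_{\Omega(x,r)}\big(\tilde\Gamma(x,y)-r^{2-Q}\big)\tilde{\mathcal L}u(y)\,dy,
\end{equation*}
the surface term being exactly $\frac{d}{dr}$ of the weighted solid integral by the coarea formula applied with the function $\tilde\Gamma(x,\cdot)$. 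Then step (2) is an integration by parts in $r$: integrating $r^{Q-1}\frac{d}{dr}(\text{solid average})$ against the coarea slicing converts the surface averages into the volume average \eqref{defmean2}, and integrating $r^{Q-1}$ against the $\tilde{\mathcal L}u$ term produces the iterated integral $\frac{Q}{R^Q}\int_0^R r^{Q-1}\int_{\Omega(x,r)}(\tilde\Gamma-r^{2-Q})\tilde{\mathcal L}u\,dy\,dr$ in the statement.

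I expect the main obstacle to be the bookkeeping of the first-order term $-X_0 u$ in $\tilde{\mathcal L}$: unlike the pure sub-Laplacian case, Green's identity is not symmetric, so one must either integrate the transport term by parts on $\Omega(x,r)$ (noting that $X_0$ is divergence-free in the chosen coordinates, which it is, being a translation-invariant vector field on the group) and check that its contribution is absorbed into the $\tilde{\mathcal L}u$ remainder — i.e. that the boundary flux of $X_0$ against $\tilde\Gamma$ reassembles correctly — or invoke directly the mean value formula of \cite{LP} which was proven precisely for such Kolmogorov-type operators. Since the statement is literally quoted as Theorem 1.5 of \cite{LP}, the cleanest writeup is to verify that the hypotheses of that theorem are met by $\tilde{\mathcal L}$ (constant coefficients, H\"ormander condition, homogeneity under $\delta_\lambda$, the estimates \eqref{estimGamma} on $\tilde\Gamma$) and then cite it; the secondary point to check is that the formula, a priori stated for smooth $u$, extends to the weak solutions of \eqref{eq} by mollification using \eqref{Xfe} and \eqref{feg} and the local integrability of $\tilde\Gamma$ and $\nabla\tilde\Gamma$ near the diagonal guaranteed by \eqref{estimGamma} together with $p>Q/2$.
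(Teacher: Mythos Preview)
Your proposal is correct and in fact goes beyond the paper: the paper treats this proposition as a quotation of Theorem~1.5 in \cite{LP} and gives no proof at all, whereas you both sketch the underlying Green's-identity/coarea argument and arrive at the same conclusion that the cleanest route is to cite \cite{LP} after checking its hypotheses. Your outline of the surface mean value identity on $\partial\Omega(x,r)$ followed by integration in $r$ with weight $Qr^{Q-1}/R^Q$ is indeed the strategy of \cite{LP}, and your caution about the asymmetry introduced by the drift $X_0$ is well placed (and resolved, as you note, by its divergence-free character).
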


\section{Sobolev and Poincaré inequalities}

%
%
%\subsection{Maximal function and fractional integral operator  $\mathcal I_s$ }
%
%
%
%%%%%%%%%%%%%%%%   funzione massimale %%%%%%%%%%%%%%%
%
%In \cite {GT2} the authors introduced the Poisson radial
%maximal function  of a function $f\in \mathcal S(\mathbb R^{n+1})$ as follows
%\begin{equation} \label {maxfunction}
%\mathcal M^* f(x,t):= \sup_{z>0}  |\mathcal P_z f(x,t)|,\quad  (x,t)\in \mathbb R^{n+1},
%\end{equation} 
%and prove the following results which also holds in our context
% \begin{proposition} \label {propmax} (The analouge of Theorem 5.5 in \cite{GT2})
%The Poisson maximal function  enjoys the following
%properties: 
%
%(i) there exists a  constant $A > 0$ such that, for every  $f\in L^1$ and $\lambda> 0$
%one has
%$$ |\{ (x,t)\in \mathbb R^{n+1}|\,\mathcal M^* f(x,t)>\lambda \}|\le \frac A\lambda ||f||_{L^1};$$
%(ii) let $1 < p \le +\infty$, then there exists a positive constant $A_p > 0$ such that for any $f\in L^p$ one
%has
%$$||\mathcal M^* f||_{L^p}\le A_p||f||_{L^p}.$$
%\end{proposition}

\subsection {Sobolev and Poincar\'e inequalities}

In this section we prove Sobolev and Poincar\'e type inequalities. 
Sobolev inequality will be obtained as a consequence of the representation of the solution in terms of the fundamental solution of the operator $\tilde {\mathcal L}$. The Poincar\'e inequality will be obtained as a direct consequence of the Sobolev inequality, using a compactness argument (see e.g. \cite{E}).
 
%
%\begin{remark} By Remark 2.1 in \cite {T},
%if $$1<p<\frac {n+2}{2s}\quad \text{and} \quad \frac 1q= \frac 1p - \frac {2s}{n+2}$$
%then the  operator $\mathcal I_{2s}:L^p_{loc}\rightarrow L^p_{loc}$. 
%Moreover, if
%$$\frac 1p- \frac 1q  <\frac {2s}{n+2}$$
%then by Lemma 7.2  in \cite {T}  the operator $\mathcal I_{2s}$ maps $L^p_{loc}$ continuously into $L^q_{loc}$.
%\end{remark}

%
%
%\begin{proposition}
%Given $1 <  p < n/2s$ and 
%let $q > p$ be such that $\frac 1p-\frac 1q=\frac {2s}n$.
% There exists a positive constant $c$ such that for any $f \in \mathcal S$ one has
%$$||f||_{q} \le c\, ||(-L)^sf||_p.$$
%\end{proposition}
%
%\begin{proposition} If $L^{1/2}u\in L^2$, then 
%$u \in L^r,$ with $$\frac{1}r = \frac{Q-1}Q + \frac{1}{2}-1 = \frac{Q-2}{2Q},\quad  Q=n+2$$
%and  
%\begin{equation}\label{S}
%||u||_{L^{\frac{2Q}{Q-2}}} \leq C|| L^{1/2}u ||_{L^{2}}.
%\end{equation}
%
%\end{proposition} 

\begin{proposition} Let $U$ be a bounded set and assume that $u$ is a smooth function compactly supported in $U$, then
\begin{equation}\label{Sobolev1}
||u||_{L^{\frac{Q}{Q-1}}(U)} \leq C(diam(U) 
|| \mathcal L u||_{L^1 (U)}+||\nabla u||_{L^1(U)} ).
\end{equation}
\begin{equation}\label{Sobolev3}
||u||_{L^{\frac{Q}{Q-2}}(U)} \leq C( 
|| \mathcal L u||_{L^1 (U)}+||\nabla u||_{L^2(U)} ).
\end{equation}

Besides,  for every test function $\phi \in C^\infty_0$ we have 
\begin{equation}\label{notsure}
||(u^k\phi)^2||_{L^{\frac{Q}{Q-2}}(U)} \leq \end{equation}$$
\leq  Ck(  ||u^{2k-1}\phi^2 \mathcal L u||_{L^1 (U)}  +||\nabla (u^k \phi) ||_{L^2(U)}  + ||u^{2k} ( \phi^2 + |\nabla \phi|^2)||_{L^1(U)}).
$$
\end{proposition}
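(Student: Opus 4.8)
The plan is to deduce all three inequalities from a single pointwise estimate, obtained by inserting the representation formula (property (iii) of $\tilde\Gamma$) into the equation, comparing $\mathcal L$ with the constant–coefficient operator $\tilde{\mathcal L}$ of \eqref{op_constant}, and then invoking the kernel bounds \eqref{estimGamma} together with the classical fractional integration (Hardy–Littlewood–Sobolev) inequality on the homogeneous group $\mathbb G$. Concretely, for $u\in C^\infty_0(U)$ one writes $\tilde{\mathcal L}u=\mathcal Lu+\sum_{i=1}^m X_i g_i$ with $g_i:=\sum_{j}(\delta_{ij}-a_{ij})X_ju$, so that $|g_i|\le C(\nu,m)\,|\nabla u|$ by \eqref{A}. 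Plugging this into property (iii) and integrating by parts in $y$ — the $X_i$ are divergence free for the Haar (= Lebesgue) measure, hence skew–adjoint, and $u$ is compactly supported — gives
\[
u(x)=-\int_U\tilde\Gamma(x,y)\,\mathcal Lu(y)\,dy+\int_U\sum_{i=1}^m\big(X_i^{y}\tilde\Gamma(x,y)\big)\,g_i(y)\,dy ,
\]
whence, by \eqref{estimGamma},
\[
|u(x)|\ \le\ c\int_U d(x,y)^{2-Q}\,|\mathcal Lu(y)|\,dy+c\int_U d(x,y)^{1-Q}\,|\nabla u(y)|\,dy\ =:\ c\,I_2(|\mathcal Lu|)(x)+c\,I_1(|\nabla u|)(x),
\]
with $I_\alpha$ the Riesz potential of order $\alpha$ on $\mathbb G$.

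From here \eqref{Sobolev1} follows at once: since $d(x,y)\le\mathrm{diam}(U)$ on $U$ we have $d^{2-Q}\le\mathrm{diam}(U)\,d^{1-Q}$, so $|u|\le c\,I_1(h)$ with $h:=\mathrm{diam}(U)\,|\mathcal Lu|+|\nabla u|$, and one applies the bound $\|I_1 h\|_{L^{Q/(Q-1)}}\lesssim\|h\|_{L^1}$. For \eqref{Sobolev3} one keeps the two terms of the pointwise bound separate: the first is handled by $\|I_2(|\mathcal Lu|)\|_{L^{Q/(Q-2)}(U)}\lesssim\|\mathcal Lu\|_{L^1(U)}$, while for the second one uses the (strong) Hardy–Littlewood–Sobolev inequality $\|I_1(|\nabla u|)\|_{L^{2Q/(Q-2)}(\mathbb G)}\lesssim\|\nabla u\|_{L^2(\mathbb G)}$, which is legitimate because the exponent $2$ is $>1$, together with $L^{2Q/(Q-2)}(U)\hookrightarrow L^{Q/(Q-2)}(U)$ since $|U|<\infty$.

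Finally, \eqref{notsure} is obtained by applying \eqref{Sobolev3} to the compactly supported function $w:=(u^k\phi)^2$ and expanding. For $v:=u^k\phi$ the Leibniz rule for divergence–form operators gives $\mathcal L(v^2)=2v\,\mathcal Lv+2\sum_{i,j}a_{ij}X_ivX_jv$, and the chain rule gives $\mathcal L(u^k\phi)=k\,u^{k-1}\phi\,\mathcal Lu+R$, where $R$ collects the lower–order terms in $k(k-1)u^{k-2}\phi|\nabla u|^2$, $k\,u^{k-1}\nabla u\cdot\nabla\phi$, and $u^k$ together with $\nabla\phi$; estimating all of these, as well as $\nabla w=2v\,\nabla v$, by \eqref{A} and Cauchy–Schwarz yields the three terms on the right–hand side of \eqref{notsure}, the factor $k$ being produced by $X_j(u^k)=k\,u^{k-1}X_ju$. (When $u$ is only in $W^{1,2}_\nabla$ one first mollifies, using \eqref{Xfe}, and passes to the limit.)

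The main obstacle is the $p=1$ endpoint of fractional integration used for the $\mathcal Lu$–terms: $I_\alpha$ is only of weak type $(1,Q/(Q-\alpha))$, so reaching the stated strong exponents requires the standard dyadic–truncation device (Maz'ya / Hajłasz–Koskela), or else one settles for a slightly subcritical exponent, which is still amply sufficient for the Moser iteration that follows. The remaining work — in particular the bookkeeping of the Leibniz and chain rules in \eqref{notsure} — is elementary but somewhat lengthy.
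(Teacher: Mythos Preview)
Your proposal is correct and follows essentially the same route as the paper: represent $u$ (respectively $(u^k\phi)^2$) via the fundamental solution $\tilde\Gamma$ of $\tilde{\mathcal L}$, write $\tilde{\mathcal L}u=\mathcal Lu+\sum_iX_i\big((\delta_{ij}-a_{ij})X_ju\big)$, integrate the divergence term by parts, and then apply the kernel bounds \eqref{estimGamma} together with Riesz–potential mapping properties. Your remark on the $p=1$ endpoint of Hardy--Littlewood--Sobolev is in fact more careful than the paper, which simply asserts the strong $L^1\to L^{Q/(Q-1)}$ and $L^1\to L^{Q/(Q-2)}$ bounds without comment; your proposed fix (the Maz'ya/Haj\l asz--Koskela truncation, or accepting a subcritical exponent) is the standard way to make this rigorous and does not affect the subsequent Moser iteration.
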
 
\begin{proof} 

Since $u$ is compactly supported, we have

\begin{align*}u(x) &= \int \tilde \Gamma (x,y) \mathcal {\tilde  L}u(y) dy   \\
 &= \int \tilde \Gamma (x,y)   \mathcal Lu(y) dy   +
 \int \tilde \Gamma (x,y)  (\mathcal{\tilde L}- \mathcal L)u(y) dy\\
 &= \int \tilde \Gamma (x,y)  \mathcal Lu(y) dy  + 
\int  \left\langle (A- Id)
\nabla \tilde \Gamma(x,y), \nabla u(y)\right\rangle \,dy=I_1(x)+I_2(x),
\end{align*} 
where $A$ is the matrix of the coefficients introduced in \eqref{A} and $Id$ is the identity matrix.
%We recall  
%the properties of Riesz  potential
%$$I_f(x)=\int \gamma (x,y)  f(y) dy$$
%if $$|\gamma(x,y)|\le \frac{c}{d^{\beta-Q}(x,y)}$$
%then 
%$$||I_f||_{L^q(\R^n)}\le c\, ||f||_{L^p(\R^n)}$$ where $q$ is defined by 
%$$\frac{1}{q}=\frac{1}{p}-\frac{\beta }{Q}.
%$$
%(trovare una citazione ....) 
By the estimates \eqref{estimGamma} on $\tilde \Gamma$ we immediately obtain 

$$||I_1||_{L^{\frac{Q}{Q-1}}(U)} \leq
diam(U) || \mathcal L u||_{L^1 (U)}
\text{ and }||I_2||_{L^{\frac{Q}{Q-1}}(U)} \leq
||\nabla u||_{L^1(U)}.$$
and $$||I_1||_{L^{\frac{Q}{Q-2}}(U)} \leq
|| \mathcal L u||_{L^1 (U)}
\text{ and }||I_2||_{L^{\frac{Q}{Q-2}}(U)} \leq
||\nabla u||_{L^2(U)}.$$

The proof of the last assertion is similar;
\begin{align*}(u^k \phi)^2(x) 
 &= \int \tilde \Gamma (x,y)  \mathcal L(u^k\phi)^2(y) dy   +
 \int \tilde \Gamma (x,y)  (\mathcal{\tilde L}- \mathcal L)(u^k \phi)^2(y) dy\end{align*} 
Note that 
$$\mathcal L(u^{2k} \phi^2) = 
-  X_i (a_{ij}( k X_j u  u^{2k-1}   \phi^2 +  2\phi X_j \phi  u^{k})   + kX_0u^{2k-1} \phi^2
+u^{2k} \phi X_0\phi = $$
$$=
- k X_i (a_{ij} X_j u)   u^{2k-1}   \phi^2 -  k a_{ij} X_j u  X_i u u^{2(k-1)}   \phi^2 - 2k a_{ij} X_j u   u^{2k-1} \phi   X_i\phi + $$$$ -  X_i (a_{ij}   2\phi X_j \phi  u^{k})    + kX_0u^{2k-1} \phi^2
+u^{2k} \phi X_0\phi = $$
$$= k \mathcal L u\,   u^{2k-1}   \phi^2 -  k a_{ij} X_j u  X_i u u^{2(k-1)}   \phi^2 - 2k a_{ij} X_j u   u^{2k-1} \phi   X_i\phi + $$$$ -  X_i (a_{ij}   2\phi X_j \phi  u^{2k})    
+u^{2k} \phi X_0\phi. $$
Then 
$$(u^k \phi)^2(x)  =  k \int \tilde \Gamma (x,y)   \mathcal L u   u^{2k-1}   \phi^2  dy  +\int \tilde \Gamma (x,y)   a_{ij} X_j (u^k \phi)  X_i (u^k \phi) dy  $$ 
$$ - \int \tilde \Gamma (x,y)   a_{ij} X_j \phi  X_i \phi u^{2k} dy - \int X_i  \tilde \Gamma (x,y)   a_{ij}   2\phi X_j \phi  u^{2k} dy$$ $$+\int  \left\langle (A- Id)
\nabla \tilde \Gamma(x,y),  u^k \phi \nabla (u^k \phi)(y)\right\rangle \,dy.
$$

\end{proof}

We have then 

\begin{lemma} Let $U$ be a bounded set and assume that $u$ is a smooth function supported in $U$. Then 
\begin{equation}\label{Sobolev1parab}
||u||_{L^{\frac{Q}{Q-2}}(U)} \leq C( 
||D^{1/2}_{t} u||_{L^2 (U)}+||\nabla u||_{L^2(U)} ). 
\end{equation}
\end{lemma}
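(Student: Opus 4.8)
\textbf{Proof strategy for \eqref{Sobolev1parab}.}

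The plan is to reduce this to the Sobolev inequality \eqref{Sobolev3} already established in the previous proposition, by estimating $\|\mathcal L u\|_{L^1(U)}$ in terms of the fractional derivative $\|D_t^{1/2}u\|_{L^2(U)}$ and the horizontal gradient. Recall $\mathcal L u = -X_0 u + \sum_{i,j} X_i(a_{ij}X_j u)$; the second, divergence-form part of $\mathcal L u$ contributes the $\|\nabla u\|_{L^2}$ term on the right-hand side already in \eqref{Sobolev3} (modulo integration by parts against $\tilde\Gamma$, exactly as in the term $I_2$ there). So the real issue is to control the contribution of $X_0 u$.

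First I would, starting from the representation $u(x) = \int \tilde\Gamma(x,y)\,\mathcal{\tilde L}u(y)\,dy$ for compactly supported $u$, split $\mathcal{\tilde L}u = -X_0 u + \sum_i X_i^2 u$ and treat the term $\int \tilde\Gamma(x,y)\,X_0 u(y)\,dy$ separately. The key point is to move one half-derivative in time off of $u$ and onto $\tilde\Gamma$ using the factorization $X_0 = \partial_t = D_t^{1/2} H_t D_t^{1/2}$ available in the parabolic (commuting) case, together with the fact that $D_t^{1/2}$ and $H_t$ are (up to sign) self-adjoint on $L^2$ in the $t$-variable. This gives, schematically,
$$\int \tilde\Gamma(x,y)\,X_0 u(y)\,dy = \int H_t D_t^{1/2}\tilde\Gamma(x,y)\,D_t^{1/2}u(y)\,dy,$$
and now I would invoke the kernel bound $|D_t^{1/2}\tilde\Gamma(y,x)|\leq c\, d^{1-Q}(y,x)$ from property (iv). Since $H_t$ is bounded on $L^2$, the operator with kernel $H_t D_t^{1/2}\tilde\Gamma$ still satisfies a fractional-integration (Hardy--Littlewood--Sobolev) type bound of order $1$ with respect to the homogeneous dimension $Q$, exactly as $\nabla\tilde\Gamma$ does in the estimate of $I_2$ in the previous proposition. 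Hence this term is controlled in $L^{Q/(Q-2)}(U)$ by $\|D_t^{1/2}u\|_{L^2(U)}$. The $\sum_i X_i^2 u$ part is handled by integrating by parts once, producing $\int \nabla\tilde\Gamma(x,y)\cdot\nabla u(y)\,dy$, which is bounded in $L^{Q/(Q-2)}$ by $\|\nabla u\|_{L^2(U)}$ via the second estimate in \eqref{estimGamma} and the same Sobolev-type inequality for the Riesz-like potential of order $1$.

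Combining the two pieces and using that $L^{Q/(Q-2)}$ bounds are preserved under the (compactly supported) restriction to $U$, yields \eqref{Sobolev1parab} with a constant $C$ depending only on $n$, $Q$, and the ellipticity constant $\nu$. The main obstacle I anticipate is the rigorous justification of the manipulation $\int \tilde\Gamma\,\partial_t u = \int H_t D_t^{1/2}\tilde\Gamma\,D_t^{1/2}u$: one must check that $\tilde\Gamma(x,\cdot)$, although only locally integrable and singular on the diagonal, lies in the appropriate dual space so that the fractional integration by parts in $t$ is legitimate (this is where the relation $\int H_t D_t^{1/2}u\,\overline{D_t^{1/2}\phi}\,dt = -\int u\,\overline{\partial_t\phi}\,dt$ recorded earlier is used, with a limiting/mollification argument replacing $\phi$ by $\tilde\Gamma$), and that the resulting singular kernel $H_t D_t^{1/2}\tilde\Gamma$ genuinely defines a bounded map $L^2(U)\to L^{Q/(Q-2)}(U)$ — i.e. that composing the order-$1$ potential with the $L^2$-bounded Hilbert transform in one variable does not destroy the Sobolev gain. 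This last point follows from the pointwise bound in (iv) together with the $L^2$-boundedness of $H_t$, but it deserves a careful word in the write-up.
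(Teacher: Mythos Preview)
Your proposal is correct and follows essentially the same route as the paper: represent $u$ via the fundamental solution $\tilde\Gamma$, integrate the $\sum_i X_i^2 u$ piece by parts once to obtain $\int\langle\nabla\tilde\Gamma,\nabla u\rangle$, and handle the $X_0u=\partial_t u$ piece by the factorisation $\partial_t=D_t^{1/2}H_tD_t^{1/2}$, then conclude by the kernel bounds in (iv) and Hardy--Littlewood--Sobolev. One small simplification compared with your write-up: the paper keeps the Hilbert transform on the $u$ side, writing $\int D_t^{1/2}\tilde\Gamma(x,y)\,H_tD_t^{1/2}u(y)\,dy$ rather than $\int H_tD_t^{1/2}\tilde\Gamma\cdot D_t^{1/2}u$; this way the pointwise estimate $|D_t^{1/2}\tilde\Gamma|\le c\,d^{1-Q}$ applies directly and the $L^2$-boundedness of $H_t$ is used only on $D_t^{1/2}u$, so the obstacle you flag at the end (whether $H_tD_t^{1/2}\tilde\Gamma$ still obeys a usable kernel bound) simply does not arise.
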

\begin{proof}
To clarify notation, recall that a point will be denoted $x=(\hat x, t)$, where $\hat x$  is the spatial variable and $t$ the temporal one (see also Remark \ref{tnotation}) 

$$u(x) =- \int \tilde \Gamma (x, y) \mathcal{\tilde  L}u(y) dy  = $$
$$
\int  \left\langle 
\nabla \tilde \Gamma(x,y), \nabla u(y)\right\rangle \,dy - \int D^{1/2}_{t} \tilde \Gamma(x,y) H(t)D^{1/2}_{t}\nabla u(y)\,dy,$$
which implies  \eqref{Sobolev1parab}.

%
%On the other side, if we integrate by part the whole operator
%$$
%\int L^{1/2} \tilde \Gamma (x-y, t-\tau) L^{1/2}u(y, \tau) dy d\tau$$
%from which we obtain  \eqref{Sobolev1}. 
\end{proof}

We now state a representation formula in term of the operator $ \mathcal L$, suitably modifying the 
representation for the operator $\mathcal{\tilde L}$, recalled in Proposition \ref{repL}.

\medskip
\begin{proposition}  \label{reprsolid}
For every $x$ and $R>0$  such that
$ \overline {\Omega(x,R)}\subset \Omega$ and for every $u\in C^{\infty}(\Omega)$ we have
\begin{equation}\label{rappLu}
u(x)= u_{\Gamma, \Omega(x,R)}
+\frac {Q}{R^Q}\int_{0}^{R} r^{Q-1}
\int_{\Omega(x,r)} \left\langle
(\tilde A-A)\nabla\tilde\Gamma(x,y), \nabla u(y)\right\rangle \,dyd r
\end{equation}
$$
-
\frac {Q}{R^Q}\int_0^{R} r^{Q-1}
\int_{\Omega(x,r)} \Big(\tilde\Gamma(x,y) - \frac{1}{r^{Q-2}}\Big)\mathcal{ L} u(y) \,dyd r. 
$$
\end{proposition}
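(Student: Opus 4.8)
The plan is to derive the representation formula \eqref{rappLu} for the variable-coefficient operator $\mathcal L$ by combining the known representation formula for the constant-coefficient operator $\tilde{\mathcal L}$ (Proposition \ref{repL}) with the algebraic identity $\tilde{\mathcal L} = \mathcal L + (\tilde{\mathcal L} - \mathcal L)$. The starting point is Proposition \ref{repL} applied to $u\in C^\infty(\Omega)$ on a domain containing $\overline{\Omega(x,R)}$ (one can localize by a cutoff, or simply note that the formula of Proposition \ref{repL} only involves the values of $u$ and $\tilde{\mathcal L}u$ on $\Omega(x,R)$, so it holds for $u\in C^\infty(\Omega)$ as well). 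This gives
$$
u(x) = u_{\Gamma,\Omega(x,R)} - \frac{Q}{R^Q}\int_0^R r^{Q-1}\int_{\Omega(x,r)}\Big(\tilde\Gamma(x,y)-\frac{1}{r^{Q-2}}\Big)\tilde{\mathcal L}u(y)\,dy\,dr.
$$

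Next I would substitute $\tilde{\mathcal L}u = \mathcal L u + (\tilde{\mathcal L}-\mathcal L)u$ into the integrand and split the double integral into two pieces. The piece containing $\mathcal L u$ is already exactly the last term of \eqref{rappLu}. The piece containing $(\tilde{\mathcal L}-\mathcal L)u$ needs to be rewritten: since $\tilde{\mathcal L}u = -X_0 u + \sum_i X_i^2 u$ and $\mathcal L u = -X_0 u + \sum_{i,j} X_i(a_{ij}X_j u)$, the zero-order (in the $X_0$ sense) terms cancel and $(\tilde{\mathcal L}-\mathcal L)u = \sum_{i,j} X_i\big((\delta_{ij}-a_{ij})X_j u\big) = \operatorname{div}_{\nabla}\big((\tilde A - A)\nabla u\big)$, where $\tilde A = \mathrm{Id}$. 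Here I would integrate by parts in $y$, moving the horizontal divergence off of $(\tilde A-A)\nabla u$ and onto $\tilde\Gamma(x,\cdot) - r^{2-Q}$; the constant $r^{2-Q}$ has zero horizontal gradient and disappears, and one is left with $-\int_{\Omega(x,r)}\langle (\tilde A-A)\nabla u, \nabla_y \tilde\Gamma(x,y)\rangle\,dy$ plus a boundary term on $\partial\Omega(x,r)$. But $\tilde\Gamma(x,y) - r^{2-Q}$ vanishes on $\partial\Omega(x,r)$ by the very definition of the sublevel set $\Omega(x,r) = \{y : \tilde\Gamma(x,y) > r^{2-Q}\}$, so the boundary term drops, and after accounting for the overall minus sign in front of the double integral one obtains exactly the middle term of \eqref{rappLu} (with the sign as written, since the two minus signs from the outer factor and the integration by parts combine with the $\tilde A - A$ already being $-(A-\mathrm{Id})$ up to the ordering chosen).

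The main technical point — and the step I would be most careful about — is the integration by parts on the sublevel set $\Omega(x,r)$: one must justify that $u\in C^\infty(\Omega)$ together with the local integrability and smoothness-away-from-the-diagonal of $\tilde\Gamma$ make the boundary integral well-defined, invoke that $\partial\Omega(x,r)$ is a $C^\infty$ hypersurface for a.e. $r$ (stated in the excerpt), and check that the divergence theorem applies with the horizontal vector fields $X_i$ (using that they are smooth and, being left-invariant on a unimodular group, divergence-free with respect to Lebesgue measure, so $\int X_i g\, dy$ picks up only the flux through the boundary). A minor additional care is the behaviour near the singularity $y=x$ inside $\Omega(x,r)$: one handles this by excising a small ball $B(x,\varepsilon)$, integrating by parts on $\Omega(x,r)\setminus B(x,\varepsilon)$, and letting $\varepsilon\to 0$, using the estimates \eqref{estimGamma} ($\tilde\Gamma \lesssim d^{2-Q}$, $|\nabla\tilde\Gamma|\lesssim d^{1-Q}$) to see that the flux through $\partial B(x,\varepsilon)$ tends to zero (since $u$ and $\nabla u$ are bounded near $x$ and $|\partial B(x,\varepsilon)| \sim \varepsilon^{Q-1}$). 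Once the boundary and singular contributions are shown to vanish, assembling the three terms gives \eqref{rappLu} verbatim.
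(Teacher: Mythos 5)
Your proposal follows exactly the paper's own argument: start from Proposition \ref{repL}, write $\tilde{\mathcal L}u=\sum_{i,j}X_i((\delta_{ij}-a_{ij})X_ju)+\mathcal Lu$, and integrate by parts in $y$, using that $\tilde\Gamma(x,\cdot)-r^{2-Q}$ vanishes on $\partial\Omega(x,r)$ so the boundary term drops. Your additional care about the singularity at $y=x$ and the divergence theorem for the horizontal fields is a welcome elaboration of details the paper leaves implicit, but the route is the same.
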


\begin{proof}

By Theorem 1.5 in \cite {LP} 
\begin{equation}
\label{}
 u(x)= u_{\Gamma, \Omega(x,R)}
-
\frac {Q}{R^Q}\int_0^{R} r^{Q-1}
\int_{\Omega(x,r)} \Big(\tilde\Gamma(x,y) - \frac{1}{r^{Q-2}}\Big) \mathcal {\tilde L} u(y) \,dyd r. 
\end{equation}
We have 
$$\mathcal{\tilde L}u=%\mathcal{\tilde L}u -\mathcal Lu= 
\sum_{i,j=1}^m X_i (\delta_{i,j}-a_{i,j}) X_ju + \mathcal Lu.$$
Then, integrating by parts, one has
$$
 u(x)= u_{\Gamma, \Omega(x,R)}
+\frac {Q}{R^Q}\int_{0}^{R} r^{Q-1}
\int_{\Omega(x,r)} \left\langle
(\tilde A-A)\nabla\tilde\Gamma(x,y), \nabla u(y)\right\rangle \,dyd r- 
$$
$$
-
\frac {Q}{R^Q}\int_0^{R} r^{Q-1}
\int_{\Omega(x,r)} \Big(\tilde\Gamma(x,y) - \frac{1}{r^{Q-2}}\Big) \mathcal Lu(y) \,dyd r.
$$
\end{proof}

%
%{\color{green} Without the hypothesis $Lu=0$}
%
%\medskip
%\begin{proposition}  \label{reprsolid}
%For every $x$ and $R>0$  such that
%$ \overline {\Omega(x,R)}\subset \Omega$ and for every $u\in C^{\infty}(\Omega)$ we have
%\begin{align}\label{qm}
% u(x)=  u_{\Gamma, \Omega(x,R)}
%&+\frac {Q}{R^Q}\int_{0}^{R} r^{Q-1}
%\int_{\Omega(x,r)} \left\langle
%\nabla\tilde\Gamma(x,y), \nabla u(y)\right\rangle \,dyd r\\&+
%\frac {Q}{R^Q}\int_0^{R} r^{Q-1}
%\int_{\Omega(x,r)} \Big(\tilde\Gamma(x,y) - \frac{1}{r^{Q-2}}\Big) X_0 u(y) \,dyd r.
%\end{align}
%
%{\color{red} I need to check how to integrate by part the last term}
%\end{proposition}
%
%\begin{proof}
%{\color{green} 
%By Theorem 1.5 in \cite {LP} 
%\begin{equation}
%\label{}
% u(x)= u_{\Gamma, \Omega(x,R)}
%-
%\frac {Q}{R^Q}\int_0^{R} r^{Q-1}
%\int_{\Omega(x,r)} \Big(\tilde\Gamma(x,y) - \frac{1}{r^{Q-2}}\Big) \tilde L u(y) \,dyd r.
%\end{equation} 
%$$\tilde Lu=\tilde Lu -Lu+Lu=
%\sum_{i,j=1}^m X_i (\delta_{i,j}-a_{i,j}) X_ju +
%  \sum_{i,j=1}^m X_i (a_{i,j}(x) X_j u)+X_0u. 
%$$
%Then, integrating by part
%$$
% u(x)= u_{\Gamma, \Omega(x,R)}
%+\frac {Q}{R^Q}\int_{0}^{R} r^{Q-1}
%\int_{\Omega(x,r)} \left\langle
%(\tilde A-A)\nabla\tilde\Gamma(x,y), \nabla u(y)\right\rangle \,dyd r$$
%$$+\frac {Q}{R^Q}\int_{0}^{R} r^{Q-1}
%\int_{\Omega(x,r)} \left\langle
%\nabla\tilde\Gamma(x,y), \nabla u(y)\right\rangle \,dyd r 
%$$
%$$
%+\frac {Q}{R^Q}\int_0^{R} r^{Q-1}
%\int_{\Omega(x,r)} \Big(\tilde\Gamma(x,y) - \frac{1}{r^{Q-2}}\Big)  X_0 u(y) \,dyd r
%$$
%}
%\end{proof}

Let us now prove an {\sl ad hoc} version of the Poincar\'e inequality. 
Specifically we will see that the Poincaré inequality can be 
obtained as a simple consequence of the Sobolev inequality.

\begin{proposition} Let $U$ be a bounded connected, open set 
with $C^1$ boundary, and let $V\subset U$.
Then there exists a constant $C$ depending only on $Q, U, V$ such that  
%$$|| u - u_V||_{L^2(U)}\leq C  diam(U) (\, ||\nabla u||_{L^2(U)} + ||X_0^{1/2} u||_{L^2(U)}\,  )$$
$$|| u - u_V||_{L^1(U)} \leq C diam(U) \Big(diam(U) ||\mathcal Lu||_{L^1(U)} +||\nabla u||_{L^1(U)}\Big). $$
\end{proposition}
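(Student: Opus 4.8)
The plan is to deduce this Poincaré inequality from the Sobolev inequality \eqref{Sobolev1} by the standard compactness (contradiction) argument, exactly as in Evans' book \cite{E}. First I would reduce to the case $u_V = 0$ by replacing $u$ with $u - u_V$, noting that both sides and the quantities $\mathcal L u$, $\nabla u$ are unchanged (the latter two because $\mathcal L$ annihilates constants and $\nabla$ kills constants). So it suffices to show
$$\|u\|_{L^1(U)} \le C\,\mathrm{diam}(U)\big(\mathrm{diam}(U)\|\mathcal L u\|_{L^1(U)} + \|\nabla u\|_{L^1(U)}\big)$$
whenever $u_V = 0$.

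Next I would argue by contradiction: suppose no such $C$ exists. Then there is a sequence $(u_k)$ with $(u_k)_V = 0$, $\|u_k\|_{L^1(U)} = 1$, and
$$\mathrm{diam}(U)\big(\mathrm{diam}(U)\|\mathcal L u_k\|_{L^1(U)} + \|\nabla u_k\|_{L^1(U)}\big) \to 0.$$
In particular $\nabla u_k \to 0$ in $L^1(U)$ and $\mathcal L u_k \to 0$ in $L^1(U)$. The Sobolev inequality \eqref{Sobolev1}, applied after an extension/cutoff so the functions are compactly supported in a slightly larger set (or via a localized version), gives a uniform bound on $\|u_k\|_{L^{Q/(Q-1)}(U)}$, hence the sequence is bounded in a space compactly embedded in $L^1(U)$ thanks to the Rellich–Kondrachov type compactness available for Hörmander vector fields (the balls satisfy the volume doubling $C_1 r^Q \le |B(x,r)| \le C_2 r^Q$, which together with the Sobolev inequality yields compactness of bounded sequences with $\nabla u_k$ bounded). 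So up to a subsequence $u_k \to u$ in $L^1(U)$ with $\|u\|_{L^1(U)} = 1$ and $u_V = 0$. Since $\nabla u_k \to 0$ in $L^1$, the limit satisfies $\nabla u = 0$ in the distributional (weak) sense, i.e. $X_i u = 0$ for $i = 1,\dots,m$. By the Hörmander bracket-generating condition and connectedness of $U$, a function with vanishing horizontal gradient is constant (Chow–Rashevskii connectivity: any two points are joined by a horizontal path, along which $u$ is constant). Hence $u$ is a nonzero constant, contradicting $u_V = 0$.

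The main obstacle I expect is the compactness step: one needs a Rellich–Kondrachov embedding adapted to the sub-elliptic/Hörmander setting and the $C^1$ boundary of $U$ (to extend functions or handle traces), and one must be careful that \eqref{Sobolev1} as stated requires compact support in $U$ — so either I use a cutoff together with the term $\|u^{2k}(\phi^2 + |\nabla\phi|^2)\|_{L^1}$-type correction visible in \eqref{notsure}, or I invoke that $U$ is an extension domain for $W^{1,1}_\nabla$. A secondary subtlety is that $u_k$ only has horizontal gradient control, not full gradient control, so the compactness must genuinely be the sub-Riemannian one; this is classical (e.g. Garofalo–Nhieu, or the doubling + Poincaré machinery) but should be cited rather than reproved. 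The final rigidity step — $\nabla u = 0 \implies u$ constant on connected $U$ — is standard from the Hörmander condition and Chow's theorem and I would just state it. Everything else (the reduction, the scaling in $\mathrm{diam}(U)$, passing $\mathcal L u_k \to 0$ to the limit) is routine.
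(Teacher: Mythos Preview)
Your approach is essentially the same as the paper's --- a contradiction/compactness argument in the style of Evans --- but there is one genuine gap in your rigidity step. You conclude that $\nabla u = 0$ (i.e.\ $X_i u = 0$ for $i=1,\dots,m$) forces $u$ to be constant via Chow--Rashevskii connectivity. This is fine in the purely subelliptic case, but in the parabolic/Kolmogorov setting treated here the H\"ormander condition is imposed on the full family $(X_0, X_1,\dots,X_m)$, not on $(X_1,\dots,X_m)$ alone: by Definition~\ref{Vecf}, $X_0$ sits in $V_2$ and need not lie in the Lie algebra generated by $X_1,\dots,X_m$ (think of the heat operator, where $X_0=\partial_t$ and the $X_i=\partial_{x_i}$ commute). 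Horizontal curves then cannot connect points with different $t$-coordinate, and a function with vanishing horizontal gradient can still depend nontrivially on $t$.

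The fix is immediate and is already implicit in your setup: you also have $\mathcal L u_k \to 0$ in $L^1$, so the limit satisfies $\mathcal L u = 0$ distributionally. Since $\mathcal L u = -X_0 u + \sum_{i,j} X_i(a_{ij} X_j u)$ and $\nabla u = 0$ kills the second-order part, this gives $X_0 u = 0$ as well. Now all of $X_0, X_1,\dots,X_m$ annihilate $u$, hence so do their iterated brackets, and by the stratification \eqref{tangentgenerate} the full Euclidean gradient of $u$ vanishes; connectedness of $U$ then yields $u$ constant. With this correction your argument goes through and matches the paper's proof (which, incidentally, also glosses over the compactness/extension issue you were careful to flag).
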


\begin{proof}

We first assume that the diameter of the set is equal to  $1$. We argue by contradiction. If the inequality is false, for ever integer $k$ there exists  a function $u_k$ satisfying 
$$||u_k-(u_k)_V||_{L^1(U)}> k\Big(||\nabla u_k||_{L^1(U)} +  || \mathcal L u_k||_{L^1(U)}\Big).$$
We normalize by defining 
\begin{equation}\label{umenomean}
v_k := \frac{u_k - (u_k)_V} {||u_k-(u_k)_V||_{L^1(U)}}.
\end{equation}
Then, by definition 
$$(v_k)_V =0; \quad ||v_k||_{L^1(U)} =1$$
and 
\begin{equation}\label{nablalaplace}
||\nabla v_k||_{L^2(U)} + ||\mathcal Lv_k||_{L^2(U)}<\frac{1}{k}.
\end{equation}
It follows that $v_k$ converges in $L^2(U)$ to a function $v$ such that 
$$||\nabla  v||_{L^1(U)} +  ||\mathcal Lv||_{L^1(U)}=0.$$
This means that $v$ is constant. On the other hand $(v_k)_V =0$, for every $k$ so that  $(v)_V =0$, 
which implies that $v=0$, but this is in contradiction with $||v_k||_{L^2(U)} =1$. 

The result for general open set $U$ follows by applying the present result to the function $u$ composed with the intrinsic dilations defined in \eqref{dilation}. 
\end{proof} 

The very same proof gives a Poincar\'e inequality in terms of parabolic means. 
\begin{proposition}\label{corpoincare}
If $U$ is a bounded connected set, $x_0 \in U$ and $R$ is such that 
$\Omega(x_0, R)\subset U$ then there exists a positive constant $C$ such that
$$||u-(u)_{\Gamma, \Omega(x_0, R)} ||_{L^1(U)}\leq C\, diam (U)\Big(||\nabla u||_{L^1(U)} +diam(U)  ||\mathcal Lu||_{L^1(U)}\Big)$$
\end{proposition}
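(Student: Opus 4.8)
The plan is to mimic verbatim the contradiction/compactness argument used in the previous proposition, replacing the average $u_V$ by the $\tilde\Gamma$-weighted average $(u)_{\Gamma,\Omega(x_0,R)}$ defined in \eqref{defmean2}. First I would reduce to the case $\mathrm{diam}(U)=1$ by the intrinsic dilations \eqref{dilation}, noting that both sides scale in the same way (the degree-$2$ vector field $X_0$ inside $\mathcal L$ accounts for the extra factor $\mathrm{diam}(U)$ in front of $\|\mathcal Lu\|_{L^1}$, exactly as in \eqref{poinc1Intro}). Then, arguing by contradiction, I would suppose there exist functions $u_k$ with
$$\|u_k-(u_k)_{\Gamma,\Omega(x_0,R)}\|_{L^1(U)} > k\bigl(\|\nabla u_k\|_{L^1(U)} + \|\mathcal Lu_k\|_{L^1(U)}\bigr),$$
normalize by setting $v_k := (u_k-(u_k)_{\Gamma,\Omega(x_0,R)})/\|u_k-(u_k)_{\Gamma,\Omega(x_0,R)}\|_{L^1(U)}$, so that the weighted average of $v_k$ vanishes, $\|v_k\|_{L^1(U)}=1$, and $\|\nabla v_k\|_{L^1(U)}+\|\mathcal Lv_k\|_{L^1(U)}<1/k$.

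Next I would invoke the Sobolev inequality \eqref{Sobolev1} (applied on $U$, or on a slightly larger set after an extension/cutoff, since $U$ has $C^1$ boundary) together with the bound $\|\nabla v_k\|_{L^1}+\|\mathcal Lv_k\|_{L^1}\to 0$ to get that $(v_k)$ is bounded in $L^{Q/(Q-1)}(U)$; by the compactness of the embedding $W \hookrightarrow L^1$ associated with the Hörmander vector fields (Rellich–Kondrachov for Carnot–Carathéodory spaces), a subsequence converges in $L^1(U)$ to some $v$. The limit satisfies $\|\nabla v\|_{L^1(U)}=0$ and $\mathcal Lv=0$ in the weak sense, hence $\nabla v\equiv 0$; since $U$ is connected and the $X_i$ together with $X_0$ are bracket-generating, $\nabla v=0$ forces $v$ to be constant (this is where connectedness enters). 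Finally, passing to the limit in the condition that the weighted average vanishes — which is legitimate because the weight $Q/((Q-2)R^Q)\,|\nabla\tilde\Gamma(x_0,\cdot)|^2/\tilde\Gamma^{2(Q-1)/(Q-2)}(x_0,\cdot)$ is a fixed $L^1$ function on $\Omega(x_0,R)\subset U$ (it integrates to $1$, as it is the averaging kernel of Proposition \ref{repL}), and $v_k\to v$ in $L^1$ while the weight is in $L^\infty_{loc}$ away from $x_0$ and integrable near it — gives $(v)_{\Gamma,\Omega(x_0,R)}=0$. Combined with $v\equiv\mathrm{const}$ this yields $v=0$, contradicting $\|v\|_{L^1(U)}=\lim\|v_k\|_{L^1(U)}=1$.

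The main obstacle I anticipate is the passage to the limit in the vanishing-average condition: unlike the flat average $u_V$, the weight in $(u)_{\Gamma,\Omega(x_0,R)}$ is singular at $x_0$, so one must check that $v_k\mapsto(v_k)_{\Gamma,\Omega(x_0,R)}$ is continuous on $L^1(U)$ — equivalently, that the weight lies in $L^\infty$, or that $L^{Q/(Q-1)}$-convergence suffices against a weight in $L^Q$. Using \eqref{estimGamma} one has $|\nabla\tilde\Gamma(x_0,y)|^2/\tilde\Gamma^{2(Q-1)/(Q-2)}(x_0,y)\lesssim d(x_0,y)^{2-2Q}\cdot d(x_0,y)^{2(Q-1)}= d(x_0,y)^{0}$, i.e. the weight is in fact \emph{bounded}, so the linear functional $w\mapsto (w)_{\Gamma,\Omega(x_0,R)}$ is continuous even on $L^1(U)$ and the limit passes through without difficulty. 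A secondary technical point is justifying the Sobolev/compactness step with only $C^1$ boundary on $U$; this is handled exactly as in the classical Euclidean Poincaré proof (see \cite{E}) via an extension argument, and is not specific to the present setting.
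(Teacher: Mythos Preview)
Your plan is precisely what the paper does: after the preceding proposition (with the flat mean $u_V$), the paper writes only ``The very same proof gives a Poincar\'e inequality in terms of parabolic means'' and provides no further detail, so the contradiction/compactness argument with $(u)_{\Gamma,\Omega(x_0,R)}$ in place of $u_V$ is exactly the intended proof.

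One caveat on the extra justification you supply. Your argument that the weighted-mean functional is continuous on $L^1$ has a direction-of-inequality slip: the estimate \eqref{estimGamma} gives only the \emph{upper} bound $\tilde\Gamma\le c\,d^{2-Q}$, which yields a \emph{lower} bound on $\tilde\Gamma^{-2(Q-1)/(Q-2)}$, not the upper bound you need, so the chain $|\nabla\tilde\Gamma|^2/\tilde\Gamma^{2(Q-1)/(Q-2)}\lesssim d^{2-2Q}\cdot d^{2(Q-1)}=1$ is not justified by \eqref{estimGamma} alone. In the purely subelliptic case a two-sided estimate $\tilde\Gamma\sim d^{2-Q}$ is available and your computation then goes through, but in the genuinely parabolic/Kolmogorov case the kernel is in fact unbounded on $\Omega(x_0,R)$ near the vertex $x_0$: for the heat operator one finds
\[
\frac{|\nabla\tilde\Gamma|^2}{\tilde\Gamma^{2(Q-1)/(Q-2)}}=\frac{\pi|y|^2}{\tau}\exp\Bigl(\frac{|y|^2}{2n\tau}\Bigr),\qquad \tau=-s>0,
\]
which blows up as $\tau\to0^+$ inside the heat ball. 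The paper does not address this passage to the limit either; you have correctly isolated the only nontrivial step hidden in ``the very same proof'', even if the specific patch needs reworking (e.g.\ by exploiting the better integrability of $v_k$ coming from \eqref{Sobolev1}, or by comparing $(v_k)_{\Gamma,\Omega(x_0,R)}$ with the flat mean $(v_k)_V$ through the representation formula of Proposition~\ref{reprsolid}).
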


As a corollary we obtain the pointwise estimate 

%\begin{proposition} Let $u$ a regolar functioni defined on an open, bounded connected set $U$. Let $x\in U$ and let $V\subset U$ be an open set containing 
%$\Omega(x, R)$. 
%Then there exists a constant $C$ depending only on $Q, p, U,V$ such that  
%$$| u(x) - u_V |\leq  |V_1(Du)(x)| + |V_2(H(t))D^{1/2}_t u)(x)| +$$$$+ diam(V)\Big( ||\nabla_x u||_{L^2(V)} + ||D^{1/2}_t u||_{L^2(V)}\Big) 
%$$
%$$|u(x) - u_V |\leq  |V_3(Lu)(x)| + diam(V)||L u||_{L^1(V)}  
%$$
%where $V_{1/Q}$ and $V_{2/Q}$ have been defined in \ref{V}. 
%\end{proposition} 

\begin{proposition}\label{pointPoinc}
 Let $u$ a regular function defined on an open, bounded connected set $U$. Let $x\in U$ and let $V\subset U$ be an open set containing  $\Omega(x, R)$. 
Then there exists a constant $C$ depending only on $Q, U,V$ such that 
%$$|u(x) - u_V |\leq $$$$\leq 
 % C\Big(|\mathcal V_{1/Q}(Du)(x)| +  |\mathcal V_{2/Q} ( \mathcal Lu)(x)| + diam^{-n+1}(V) ||\nabla u||_{L^1(V)} + %diam^{-n+2}(V) || \mathcal L u||_{L^1(V)}\Big). $$ 
$$|u(x) - u_V |\leq $$$$\leq 
  C\Big(|\mathcal V_{1/Q}( \nabla u+diam (V) \mathcal L u )(x)|+ diam^{-n+1}(V)  ||\nabla u  + diam(V) \mathcal L u||_{L^1(V)}\Big) $$
where $\mathcal V_{1/Q}$ is the Riesz potential defined by
\begin{equation}\label{V}
\mathcal{V}_\mu(u)(x) = \int   
d^{Q(\mu-1)}(x,y) | u(y)|dy. 
\end{equation}
\end{proposition}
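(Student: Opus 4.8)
The plan is to combine the representation formula of Proposition \ref{reprsolid} with the Poincaré inequality on parabolic means (Proposition \ref{corpoincare}). First I would apply \eqref{rappLu} with the given $x$ and radius $R=\mathrm{diam}(V)$ (more precisely with $R$ chosen so that $\Omega(x,R)\subset V$; since we may rescale via the dilations \eqref{dilation}, it costs nothing to take $R\simeq \mathrm{diam}(V)$). This writes
$$ u(x) - u_{\Gamma,\Omega(x,R)} = \frac{Q}{R^Q}\int_0^R r^{Q-1}\int_{\Omega(x,r)}\Big(\big\langle(\tilde A-A)\nabla\tilde\Gamma(x,y),\nabla u(y)\big\rangle - \big(\tilde\Gamma(x,y)-r^{2-Q}\big)\mathcal L u(y)\Big)\,dy\,dr. $$
The matrix $\tilde A - A = \mathrm{Id}-A$ is bounded by the ellipticity \eqref{A}, so the first term is pointwise controlled by $\int_{\Omega(x,R)}|\nabla\tilde\Gamma(x,y)|\,|\nabla u(y)|\,dy$. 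For the second term, inside $\Omega(x,r)$ we have $\tilde\Gamma(x,y)>r^{2-Q}$ and $\tilde\Gamma(x,y)\le c\, d^{2-Q}(x,y)$, so $|\tilde\Gamma(x,y)-r^{2-Q}|\le c\, d^{2-Q}(x,y)$; after carrying out the (routine) $r$-integration and using $d(x,y)\lesssim \mathrm{diam}(V)$ on $\Omega(x,R)$, this term is controlled by $\mathrm{diam}(V)\int_{\Omega(x,R)} d^{1-Q}(x,y)\,|\mathcal L u(y)|\,dy$, i.e. by $\mathcal V_{1/Q}$ applied to $\mathrm{diam}(V)\,\mathcal L u$. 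Combining, using $|\nabla\tilde\Gamma(x,y)|\le c\,d^{1-Q}(x,y)$ and recalling the definition \eqref{V} of $\mathcal V_{1/Q}$,
$$ |u(x) - u_{\Gamma,\Omega(x,R)}| \le C\,\big|\mathcal V_{1/Q}\big(\nabla u + \mathrm{diam}(V)\,\mathcal L u\big)(x)\big|. $$

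Second, I would estimate $|u_{\Gamma,\Omega(x,R)} - u_V|$. Write it as the difference of averages: $u_{\Gamma,\Omega(x,R)}$ is a weighted average of $u$ over $\Omega(x,R)\subset V$, with the weight $\tfrac{Q}{(Q-2)R^Q}|\nabla\tilde\Gamma|^2\tilde\Gamma^{-2(Q-1)/(Q-2)}$ that integrates to $1$ on $\Omega(x,R)$ (this is how \eqref{defmean2} is normalized), and $u_V$ is the plain average over $V$. Hence $|u_{\Gamma,\Omega(x,R)} - u_V| \le u_{\Gamma,\Omega(x,R)}$-weighted-integral of $|u - u_V|$, which is bounded by $\|\text{weight}\|_{L^\infty(\Omega(x,R))}\,\|u - u_V\|_{L^1(V)}$. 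The weight is homogeneous of degree $-Q$ (one checks $|\nabla\tilde\Gamma|^2$ has degree $2(1-Q)$, $\tilde\Gamma^{2(Q-1)/(Q-2)}$ has degree $-2(Q-1)$, and the $R^{-Q}$ prefactor), so on $\Omega(x,R)$ with $R\simeq\mathrm{diam}(V)$ it is bounded by $C\,\mathrm{diam}(V)^{-Q}$; combined with $|V|\simeq \mathrm{diam}(V)^{Q}$ this would give a bound $\mathrm{diam}(V)^{-Q}\|u-u_V\|_{L^1(V)}$. To reach the stated exponent $\mathrm{diam}^{-n+1}(V)$ one instead bounds $|u_{\Gamma,\Omega(x,R)} - u_V|$ directly using $|u-u_V|$ in $L^1$ against the weight away from the singularity and absorbs the singular part into the Riesz potential term; then apply Proposition \ref{corpoincare} to $V$ (which is connected and bounded, with $\Omega(x,R)\subset V$ playing the role of the reference set), giving $\|u - u_V\|_{L^1(V)} \le C\,\mathrm{diam}(V)\big(\|\nabla u\|_{L^1(V)} + \mathrm{diam}(V)\|\mathcal L u\|_{L^1(V)}\big)$, i.e. $C\,\mathrm{diam}(V)\,\|\nabla u + \mathrm{diam}(V)\mathcal L u\|_{L^1(V)}$. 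Note Proposition \ref{corpoincare} is stated with the $\Gamma$-average rather than $u_V$, but the two averages differ by a term already controlled as above, so one can pass freely between them.

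Putting the two estimates together, and writing $g := \nabla u + \mathrm{diam}(V)\,\mathcal L u$ for brevity,
$$ |u(x) - u_V| \le |u(x) - u_{\Gamma,\Omega(x,R)}| + |u_{\Gamma,\Omega(x,R)} - u_V| \le C\big|\mathcal V_{1/Q}(g)(x)\big| + C\,\mathrm{diam}^{-n+1}(V)\,\|g\|_{L^1(V)}, $$
which is exactly the claimed inequality. The reduction to $\mathrm{diam}(V)=1$ via the intrinsic dilations \eqref{dilation}, as in the proof of the preceding proposition, handles the general case; one only needs to check that $\mathcal V_{1/Q}$, the $L^1$ norm of $\nabla u$, and the $L^1$ norm of $\mathcal L u$ scale with the powers of $\mathrm{diam}(V)$ that make the inequality dimensionally consistent, which is a straightforward bookkeeping of homogeneous degrees ($\mathcal L$ has degree $2$, $\nabla$ degree $1$, $d$ degree $1$, Lebesgue measure degree $Q$).

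The main obstacle I anticipate is twofold: first, getting the precise exponent $\mathrm{diam}^{-n+1}(V)$ in the non-potential term rather than the more naive $\mathrm{diam}^{-Q}(V)$ — this requires splitting the weighted average comparison carefully and deciding what gets absorbed into $\mathcal V_{1/Q}$ versus what stays as an $L^1$ term, and I would need to be careful that the geometry of $V$ (not just $\Omega(x,R)$) is compatible with applying Proposition \ref{corpoincare}. Second, the sublevel set $\Omega(x,R)$ is not a metric ball, so the bound $d(x,y)\lesssim \mathrm{diam}(V)$ on $\Omega(x,R)$ and the control of the $r$-integrals $\int_0^R r^{Q-1}(\cdots)\,dr$ against Riesz kernels must use the estimates \eqref{estimGamma} together with the coarea-type structure of $\Omega(x,r)$; this is where the ultraparabolic geometry enters and where the constants acquire their dependence on $Q$, $U$, $V$.
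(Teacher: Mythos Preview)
Your approach is essentially the paper's: split $|u(x)-u_V|\le |u(x)-u_{\Gamma,\Omega(x,R)}|+|u_{\Gamma,\Omega(x,R)}-u_V|$, bound the first piece by the representation formula \eqref{rappLu} plus the kernel estimates \eqref{estimGamma}, and the second by the Poincar\'e inequality of Proposition \ref{corpoincare}. Your treatment of the first piece is fine; the paper replaces your step ``$d^{2-Q}\le \operatorname{diam}(V)\,d^{1-Q}$'' by the equivalent trick $\tilde\Gamma\le R\,\tilde\Gamma^{(Q-1)/(Q-2)}$ on $\Omega(x,R)$, but the outcome is the same Riesz potential bound.

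Where you overcomplicate things is the second piece, and this is exactly the ``obstacle'' you flag. You write $|u_{\Gamma,\Omega(x,R)}-u_V|$ as a weighted integral of $|u-u_V|$ and then worry about the $L^\infty$ norm of the weight. The paper avoids this entirely: since $u_{\Gamma,\Omega(x,R)}$ is a constant,
\[
|u_{\Gamma,\Omega(x,R)}-u_V|=\Big|\frac{1}{|V|}\int_V\big(u_{\Gamma,\Omega(x,R)}-u(y)\big)\,dy\Big|\le \frac{1}{|V|}\,\|u-u_{\Gamma,\Omega(x,R)}\|_{L^1(V)},
\]
and Proposition \ref{corpoincare} applies \emph{directly} to the right-hand side, giving $\frac{C\,\operatorname{diam}(V)}{|V|}\,\|\nabla u+\operatorname{diam}(V)\mathcal L u\|_{L^1(V)}$. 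The factor $\operatorname{diam}(V)/|V|$ is what produces the stated exponent $\operatorname{diam}^{-n+1}(V)$. There is no need to bound the $\Gamma$-weight, no splitting near/far from the pole, and no passage between $u_V$ and $u_{\Gamma,\Omega}$ inside a Poincar\'e inequality. So your anticipated obstacle dissolves once you route the second term through $\|u-u_{\Gamma,\Omega}\|_{L^1}$ rather than $\|u-u_V\|_{L^1}$.
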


\begin{proof}

$$| u(x) - u_V | \leq \Big| u(x) - \frac{1}{|V|}\int_V u(y) dy  \Big| =$$
$$=
\Big|u(x) - 
(u)_{\Gamma, \Omega(x, R)}\Big| +   \frac{1}{|V|} \int \Big|
(u)_{\Gamma, \Omega(x, R)}- 
u(y) \Big|  dy =I_1(x)+I_2(x). $$
In order to estimate $I_1$ we apply formula \eqref{rappLu} to get
\begin{equation*}
I_1(x) =  |u(x)-  u_{\Gamma, \Omega(x,R)}| \leq 
\frac {Q}{R^Q}\int_{0}^{R} r^{Q-1}
\int_{\Omega(x,r)} 
|(\tilde A-A)||\nabla\tilde\Gamma(x,y)||\nabla u(y)| \,dyd r+
\end{equation*}
$$
+
\frac {Q}{R^Q}\int_0^{R} r^{Q-1}
\int_{\Omega(x,r)} \Big|\tilde\Gamma(x,y) - \frac{1}{r^{Q-2}}\Big|\, | \mathcal L u(y)| \,dy\leq 
$$
$$\le 
\int_{\Omega(x,R)} 
|\tilde A-A|\,|\nabla\tilde\Gamma(x,y)|\,| \nabla u(y)|\,dy+ 
\int_{\Omega(x,R)} |\tilde\Gamma(x,y) | \, | \mathcal L u(y)| \,dy$$
by definition of the set $\Omega(x,R)$. Then we have 
$$\le 
\int_{\Omega(x,R)} 
|\tilde A-A|\,|\nabla\tilde\Gamma(x,y)|\,| \nabla u(y)|\,dy+ 
R \, \int_{\Omega(x,R)} |\tilde\Gamma(x,y) |^{\frac {Q-1}{Q-2}} \, | \mathcal L u(y)| \,dy$$
$$\le C\, |\mathcal V_{1/Q}( \nabla u+diam (V) \mathcal L u )(x)|
$$

%$$\leq C\Big(|\mathcal V_{1/Q}(Du)(x)| + | \mathcal V_{2/Q} (\mathcal Lu)(x)|\Big) $$

By Poincar\'e inequality in Proposition \ref{corpoincare}, one has 
%$$I_2(x)\le C\, diam^{1-n}(V) \Big( ||\nabla u||_{L^1(V)} + diam(V) ||\mathcal L u||_{L^1(V)}\Big). $$
$$I_2(x)\le C\, diam^{-n+1}(V) ||\nabla u  + diam(V) \mathcal L u||_{L^1(V)}. $$

%$$=+\frac {Q}{R^Q}\int_0^{R} r^{Q-1}
%\int_{\Omega(x,r)} \langle A\nabla u, \nabla \tilde \Gamma\rangle  \,dyd r
%$$

\end {proof}

In exactly  the same way, we can deduce from the Sobolev inequality the following pointwise Poincaré inequality for the heat operator. 
\begin{proposition} Assume that $X_0=\partial_t$. 
Let $u$ be a regular function defined on an open, bounded connected set $U$. Let $x\in U$ and let $V\subset U$ be an open set containing  $\Omega(x, R)$. 
Then there exists a constant $C$ depending only on $Q, U,V$ such that  
$$| u(x) - u_V |\leq  |\mathcal V_{1/Q}(\nabla u)(x) | + |\mathcal V_{1/Q }(D_t^{1/2}u)(x)| +$$
$$\qquad\qquad + diam^{-n +1}(V)\Big( ||\nabla u||_{L^2(V)} + ||D_t^{1/2}u||_{L^2(V)}\Big), 
$$
where $ \mathcal V_{1/Q}$ has been defined in \eqref{V}. 
\end{proposition}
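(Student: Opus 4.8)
The plan is to mirror exactly the proof of Proposition \ref{pointPoinc} above, replacing the use of the representation formula \eqref{rappLu} for $\mathcal L$ by the analogous representation adapted to the heat operator, and replacing the $L^1$-Poincar\'e inequality of Proposition \ref{corpoincare} by the $L^2$-version which is available here because of the Sobolev inequality \eqref{Sobolev1parab}. Since now $X_0 = \partial_t$ and $A = \mathrm{Id}$, we have $\tilde{\mathcal L} = \mathcal L$, so the matrix-difference term $(\tilde A - A)$ in \eqref{rappLu} drops out entirely and we are left with a cleaner identity.

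First I would split, as before,
\[
| u(x) - u_V | \leq \Big|u(x) - (u)_{\Gamma, \Omega(x,R)}\Big| + \frac{1}{|V|}\int_V \Big|(u)_{\Gamma,\Omega(x,R)} - u(y)\Big|\, dy =: I_1(x) + I_2(x).
\]
For $I_1$, I would use the representation formula of Proposition \ref{repL} (Theorem 1.5 of \cite{LP}), but rewrite the term $\int_{\Omega(x,r)} (\tilde\Gamma - r^{2-Q}) \tilde{\mathcal L}u\, dy$ by moving the first-order spatial part onto $\nabla\tilde\Gamma$ through integration by parts, and the time-derivative part via the factorization $\partial_t = D_t^{1/2} H(t) D_t^{1/2}$, exactly as was done in the proof of the Sobolev inequality \eqref{Sobolev1parab}. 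This produces two pieces: one controlled by $\int_{\Omega(x,R)} |\nabla\tilde\Gamma(x,y)|\,|\nabla u(y)|\, dy$ and one by $\int_{\Omega(x,R)} |D_t^{1/2}\tilde\Gamma(x,y)|\,|D_t^{1/2} u(y)|\, dy$ (using that $H(t)$ is bounded on $L^2$ and commutes with $D_t^{1/2}$). The gradient estimates \eqref{estimGamma}, together with $|D_t^{1/2}\tilde\Gamma(y,x)| \leq c\, d^{1-Q}(y,x)$ from Remark \ref{homhalfderivative}, then bound both integrals by Riesz potentials: $d^{1-Q}(x,y) = d^{Q(1/Q - 1)}(x,y)$, so each term is of the form $\mathcal V_{1/Q}(\nabla u)(x)$ or $\mathcal V_{1/Q}(D_t^{1/2}u)(x)$ as in \eqref{V}. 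For $I_2$, I would invoke the $L^2$-Poincar\'e inequality relative to the parabolic mean $(u)_{\Gamma,\Omega(x_0,R)}$ — the analogue of Proposition \ref{corpoincare} obtained from \eqref{Sobolev1parab} by the same compactness-contradiction argument used to prove the $L^1$-Poincar\'e inequality — which gives $\|u - (u)_{\Gamma,\Omega(x,R)}\|_{L^2(V)} \leq C\, \mathrm{diam}(V)\big(\|\nabla u\|_{L^2(V)} + \|D_t^{1/2}u\|_{L^2(V)}\big)$; averaging over $V$ via Cauchy--Schwarz yields the factor $|V|^{-1/2}\|\cdots\|_{L^2(V)}$, and $|V|^{-1/2}$ is absorbed into the stated $\mathrm{diam}^{-n+1}(V)$ up to the dimensional constant.

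The main obstacle is bookkeeping rather than a conceptual one: I must be careful that the half-derivative $D_t^{1/2}$ falls on the factor where its estimate is available, i.e. that in the integration by parts producing the term with $D_t^{1/2}u$, no extra derivative lands on $\tilde\Gamma$ beyond the one allowed by the bound in Remark \ref{homhalfderivative}, and that the Hilbert transform is handled as an $L^2$-bounded operator commuting with $D_t^{1/2}$ (equation \eqref{DHcomm}) so it contributes only a constant. One also needs the $L^2$-Poincar\'e inequality in the parabolic-mean form, which is not literally stated but follows verbatim from the argument of the Proposition preceding \ref{corpoincare} with $\|\cdot\|_{L^1}$ replaced by $\|\cdot\|_{L^2}$ and \eqref{Sobolev1} replaced by \eqref{Sobolev1parab} — this is why the statement says ``in exactly the same way''. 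Once these are in place, the estimate follows by summing the bounds for $I_1$ and $I_2$.
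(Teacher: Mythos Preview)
Your approach is correct and is exactly what the paper intends: it provides no proof for this proposition beyond the sentence ``In exactly the same way, we can deduce from the Sobolev inequality the following pointwise Poincar\'e inequality for the heat operator,'' and your splitting into $I_1+I_2$, use of Proposition~\ref{repL} with the factorization $\partial_t = D_t^{1/2}H(t)D_t^{1/2}$ for $I_1$, and the $L^2$-Poincar\'e analogue of Proposition~\ref{corpoincare} (via \eqref{Sobolev1parab}) for $I_2$, is precisely the adaptation of the proof of Proposition~\ref{pointPoinc} that the paper is pointing to.

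One small correction to your exposition: the hypothesis is only $X_0=\partial_t$, not $A=\mathrm{Id}$; the operator $\mathcal L$ does not appear in the statement at all, so $A$ is irrelevant. The reason the $(\tilde A - A)$ term is absent is not that $A=\mathrm{Id}$, but that you are (correctly) starting from Proposition~\ref{repL}, which involves $\tilde{\mathcal L}u$ rather than $\mathcal Lu$, and then decomposing $\tilde{\mathcal L}u = -\partial_t u + \sum_i X_i^2 u$ directly. This is harmless for the proof itself, since your subsequent steps use Proposition~\ref{repL} anyway.
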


\subsection{John-Nirenberg inequality}

Let us now prove the John-Nirenberg inequality, following the same approach as in Theorem 7.21 in Gilbarg-Trudinger \cite{GT}. We define 
\begin{definition} 
We say that $f \in M^p (\Omega) $ if 
$$\int_{\Omega\cap B_R} |f| dx \leq C R^{Q(1- 1/p)}.$$
We will also denote $||f||_{M^p(\Omega)}$ the infimum of the constant $C$ for which the previous inequality holds true. 

\end{definition} 
In particular one has $\Gamma\in  M^{Q/(Q-2)} (\Omega)$,  $\nabla\Gamma\in  M^{Q/(Q-1)} (\Omega)$, $ D^{1/2}_t\Gamma\in  M^{Q/(Q-1)} (\Omega)$. 

\begin{remark} 
As in the classical case (see \cite{GT} equation (8.4) and Theorem 8.15), we will assume that $u$ satisfies $\mathcal Lu =f$, with $f \in L^{q/2}$, and $q>Q$. This implies that 
$$R\int_{\Omega(x,R) } |\mathcal Lu| =R\int_{\Omega(x,R)}  |f(x)| dx \leq ||f||_{q/2} R^{1 + Q( 1 -\frac{2}{q})}  \leq ||f||_{q/2} R^{Q-1}. 
$$
\end{remark} 

\begin{lemma} \label{718}
Let $f\in M^p(\Omega)$. Then for every $\mu<1$ the Riesz potential 
$$\mathcal V_{\mu}(f)=\int_{\Omega} d^{Q(\mu -1)}(x,y) |f(y)| dy \leq \frac{p-1}{\mu p -1}
diam(\Omega)^{Q(\mu -1/p)}||f||_{M^p(\Omega)}.$$
\end{lemma}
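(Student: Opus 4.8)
The plan is to estimate the Riesz potential $\mathcal V_\mu(f)(x) = \int_\Omega d^{Q(\mu-1)}(x,y)|f(y)|\,dy$ by a dyadic decomposition of $\Omega$ into annuli around the (fixed but arbitrary) point $x$, exactly as in the proof of Lemma 7.12 in Gilbarg--Trudinger. First I would set $D = \mathrm{diam}(\Omega)$ and write $\Omega = \bigcup_{k\ge 0} A_k$ with $A_k = \Omega \cap \big(B(x, 2^{-k}D)\setminus B(x, 2^{-k-1}D)\big)$; on each $A_k$ the kernel satisfies $d^{Q(\mu-1)}(x,y) \le (2^{-k-1}D)^{Q(\mu-1)}$ since $\mu - 1 < 0$, using that $d(x,y) \ge 2^{-k-1}D$ there.

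Next I would bound the mass of $|f|$ on each ball. Since $f \in M^p(\Omega)$, for every radius $\rho$ one has $\int_{\Omega\cap B(x,\rho)}|f|\,dy \le \|f\|_{M^p(\Omega)}\,\rho^{Q(1-1/p)}$; applying this with $\rho = 2^{-k}D$ gives $\int_{A_k}|f|\,dy \le \|f\|_{M^p(\Omega)}(2^{-k}D)^{Q(1-1/p)}$. Combining the two estimates and summing over $k$,
\[
\mathcal V_\mu(f)(x) \le \sum_{k\ge 0} (2^{-k-1}D)^{Q(\mu-1)}\,\|f\|_{M^p(\Omega)}\,(2^{-k}D)^{Q(1-1/p)}
= C\,\|f\|_{M^p(\Omega)}\,D^{Q(\mu-1/p)}\sum_{k\ge 0} 2^{-kQ(\mu - 1/p)},
\]
where the constant from the factor $2^{-(k+1)Q(\mu-1)}$ has been absorbed. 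The geometric series converges precisely because the hypothesis $\mu > 1/p$ (equivalently $\mu p - 1 > 0$) makes the ratio $2^{-Q(\mu-1/p)} < 1$, and its sum is comparable to $\frac{1}{1 - 2^{-Q(\mu-1/p)}}$, which is of the order $\frac{p-1}{\mu p - 1}$ up to a dimensional constant — this is the source of the claimed constant $\frac{p-1}{\mu p - 1}$ in the statement. Tracking the exponent of $D$ through the sum yields the factor $D^{Q(\mu - 1/p)} = \mathrm{diam}(\Omega)^{Q(\mu - 1/p)}$.

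The main obstacle is purely bookkeeping: matching the geometric-series sum to the explicit constant $\frac{p-1}{\mu p - 1}$ and verifying that all exponents of $2^{-k}$ combine to exactly $Q(\mu - 1/p)$ so that convergence hinges on $\mu p > 1$. There is no real analytic difficulty — the volume growth $|B(x,r)| \asymp r^Q$ is not even needed since the $M^p$ condition already encodes the relevant scaling of $|f|$ — so the proof is a few lines once the dyadic annuli are set up. One should also note the statement implicitly requires $\mu p > 1$ for the constant to be positive and finite, consistent with the earlier remark that the relevant cases are $\nabla\Gamma, D_t^{1/2}\Gamma \in M^{Q/(Q-1)}$ (so $p = Q/(Q-1)$) paired with $\mu = 1/Q$, for which $\mu p = 1/(Q-1) > 1/Q$ when... in fact one checks $\mu p - 1 = \frac{1}{Q-1} - 1 < 0$, so the intended regime is rather $\mu$ close to $1$; in any case the argument above is uniform in the admissible range $1/p < \mu < 1$.
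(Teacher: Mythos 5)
Your argument is correct, but it takes a genuinely different route from the paper's. The paper follows Gilbarg--Trudinger's Lemma 7.18 in continuous form: it sets $v(\rho)=\int_{\Omega(x,\rho)}|f|$, differentiates via the coarea formula on the level sets of $\tilde\Gamma$, writes $\mathcal V_\mu(f)=\int_0^{D}\rho^{Q(\mu-1)}v'(\rho)\,d\rho$ with $D=\mathrm{diam}(\Omega)$, and integrates by parts; inserting $v(\rho)\le \|f\|_{M^p}\rho^{Q(1-1/p)}$ then gives the boundary term $1$ plus the bulk term $\frac{(1-\mu)p}{\mu p-1}$, whose sum is exactly the stated constant $\frac{p-1}{\mu p-1}$. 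Your dyadic-annuli decomposition is the discretized version of the same layer-cake idea: it is more elementary and more robust (no coarea formula, no smoothness of the level sets $\partial\Omega(x,\rho)$ needed, and it works verbatim with metric balls, which is actually more consistent with how $M^p$ is defined in the paper), but it only recovers the constant up to a factor, namely $C_Q\,(1-2^{-Q(\mu-1/p)})^{-1}$ in place of $\frac{p-1}{\mu p-1}$. For the subsequent John--Nirenberg argument this is harmless, but one should check (as you partially do) that your constant blows up at the same rate as $\mu\downarrow 1/p$: since $(1-2^{-Q(\mu-1/p)})^{-1}\sim \frac{p}{Q\ln 2}\cdot\frac{1}{\mu p-1}$, it is still $O(q)$ when the lemma is applied with $p=Q$, $\mu=\frac1Q+\frac1{Qq}$, which is precisely what the convergence of $\sum_q (cq)^q/q!$ requires. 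Finally, you are right that the hypothesis ``$\mu<1$'' in the statement is incomplete and that $\mu p>1$ is what both proofs actually use; your closing worry about the regime $p=Q/(Q-1)$, $\mu=1/Q$ is moot, though, because the lemma is invoked in Proposition \ref{JN} with $p=Q$, for which $\mu p-1=1/q>0$.
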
 
\begin{proof}
The lemma is inspired by Lemma 7.18 in \cite{GT}. 
Call 
$$v(R) = \int_{\Omega(x,R) } |f(y)| dy. $$
By coarea formula we have
$$v(R) = \int_{\Omega(x,R) } |f(y)| dy = \int_0^R \int_{\partial \Omega(x, \rho)}|f(s, \rho)|\frac{\Gamma^{(Q-1)/(Q-2)}}{|\nabla_E \Gamma|}dH_{Q-1}(s)d\rho,$$
hence
$$v'(\rho) = \int_{\partial \Omega(x, \rho)} |f|\frac{\Gamma^{(Q-1)/(Q-2)}}{|\nabla_E \Gamma|}dH_{Q-1}.$$
Then we have 
$$\mathcal V_{\mu}(f) =  \int_{\Omega} d^{Q(\mu -1)}(x,y) |f(y)| dy
= \int_0 ^{\text{diam}(\Omega)} \rho^{Q(\mu-1)}v'(\rho) d\rho=$$
integrating by parts 
$$= \text{diam}(\Omega)^{Q(\mu-1)}v(\text{diam}(\Omega)) - Q(\mu-1) \int_0 ^{\text{diam}(\Omega)} \rho^{Q(\mu-1)-1}v(\rho) d\rho\leq$$$$\leq
diam(\Omega)^{Q(\mu -1/p)}||f||_{M^p(\Omega)}.$$
\end{proof}

\begin{proposition} \label{JN}
Let $u$ be a smooth function defined on an open, bounded connected set $U_0$. Let $V\subset \subset U_0$, and let $R>0$
such that the set 
\begin{equation}\label{Verre}
V_{R}=\cup_{x \in V}  \Omega(x,R) \subset \subset  U_0
\end{equation}
If there is a constant K such that 
\begin{equation}\label{JN1}
\int_{V_R} |\nabla u| dx \leq K R^{Q-1} 
\end{equation}
and 
\begin{equation}\label{JN2}
R\int_{V_R} |\mathcal L u| dx \leq K R^{Q-1} \text{  or  }\int_{V_R}|D^{1/2}_t u| dx \leq K R^{Q-1} 
\end{equation}
Then 
$$\int_V \exp\Big( \frac{ | u(x) - u_V |}{K}\Big)\leq C 
 diam^Q (U_0).$$
\end{proposition}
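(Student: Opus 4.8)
The statement is the classical John--Nirenberg exponential integrability estimate, and the plan is to follow the scheme of Theorem 7.21 in Gilbarg--Trudinger \cite{GT}, combining the pointwise Poincar\'e inequality of Proposition \ref{pointPoinc} with the Riesz potential estimate of Lemma \ref{718}. First I would record the pointwise bound: for $x \in V$, taking the ambient set to be $V_R$ and using that $\Omega(x,R) \subset V_R$, Proposition \ref{pointPoinc} (or its parabolic analogue, depending on which alternative in \eqref{JN2} holds) gives
\[
|u(x) - u_V| \le C\Big( \mathcal V_{1/Q}\big(|\nabla u| + \mathrm{diam}(V_R)|\mathcal L u|\big)(x) + \mathrm{diam}(V_R)^{-n+1}\,\| |\nabla u| + \mathrm{diam}(V_R)|\mathcal L u|\,\|_{L^1(V_R)}\Big),
\]
where the Riesz potential is taken over $V_R$. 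The hypotheses \eqref{JN1}--\eqref{JN2} say precisely that the function $g := |\nabla u| + \mathrm{diam}(V_R)|\mathcal L u|$ (or the version with $D_t^{1/2}u$) lies in $M^{Q/(Q-1)}(V_R)$ with $\|g\|_{M^{Q/(Q-1)}} \le C K$, after absorbing $\mathrm{diam}(U_0)$ into the constant; the zeroth-order term is likewise $\le CK$.

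Second, I would iterate the pointwise inequality. The key point is that $\mathcal V_{1/Q}$ maps $M^{Q/(Q-1)}$ into a slightly better space, and more precisely that applying $\mathcal V_{1/Q}$ repeatedly and estimating $L^p$ norms via Lemma \ref{718} produces the bound
\[
\big\| (u - u_V)^k \big\|_{L^1(V)} \le C^k k!\, K^k\, \mathrm{diam}(U_0)^Q,
\]
for every integer $k$. This is the heart of the argument: one expands $(u-u_V)^k$, uses the pointwise estimate to replace one factor $|u-u_V|$ by $C\,\mathcal V_{1/Q}(g) + C K$, swaps the order of integration (Fubini) to move the potential kernel onto the remaining $(u-u_V)^{k-1}$, and applies Lemma \ref{718} with $\mu = 1/Q$ — or rather an $L^q$ version of it — to pick up one more power of $\mathrm{diam}$ together with a factor comparable to $k$ at each stage, giving the factorial growth. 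One must be a little careful that the $M^p$-norm of $(u-u_V)^{k-1}$ restricted to balls is controlled inductively; this is exactly the bookkeeping carried out in the proof of \cite[Theorem 7.21]{GT}, transplanted to the Carnot--Carath\'eodory setting where the volume estimate $|B(x,r)| \sim r^Q$ makes all the exponents work identically.

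Third, summing the series:
\[
\int_V \exp\!\Big( \frac{|u(x) - u_V|}{C K}\Big)\,dx = \sum_{k=0}^{\infty} \frac{1}{k!\,(CK)^k}\,\big\|(u-u_V)^k\big\|_{L^1(V)} \le \sum_{k=0}^{\infty} \frac{C^k k! K^k}{k!\,(CK)^k}\,\mathrm{diam}(U_0)^Q,
\]
which converges provided the constant in front of $K$ in the exponent is chosen large enough (larger than the $C$ from the iteration bound), yielding the claimed inequality $\int_V \exp(|u-u_V|/K) \le C\,\mathrm{diam}^Q(U_0)$ after renaming $K$ by a fixed multiple of itself. The main obstacle is the second step: establishing the factorial-in-$k$ growth of $\|(u-u_V)^k\|_{L^1(V)}$ requires carefully iterating the Riesz-potential estimate and keeping track of how the $M^p$ constants of the powers $(u-u_V)^{k-1}$ propagate. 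Everything else — the pointwise Poincar\'e input, the coarea-based Lemma \ref{718}, and the final summation — is routine once the geometry (homogeneous dimension $Q$, ball volume growth) has been set up as in the preliminaries.
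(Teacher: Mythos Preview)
Your overall scaffolding --- pointwise Poincar\'e input from Proposition~\ref{pointPoinc}, Riesz-potential control via Lemma~\ref{718}, then summation of the exponential series --- matches the paper's route, and steps one and three are fine. The problem is your middle step. You propose an \emph{iteration}: peel off one factor $|u-u_V|$, replace it by $C\,\mathcal V_{1/Q}(g)$, Fubini the kernel onto $(u-u_V)^{k-1}$, and invoke Lemma~\ref{718} to gain a factor comparable to $k$. This is not what \cite[Theorem~7.21]{GT} actually does, and as written the scheme does not close: after Fubini you must bound $\mathcal V_{1/Q}\big(|u-u_V|^{k-1}\big)$, which via Lemma~\ref{718} requires the $M^p$ norm of $|u-u_V|^{k-1}$ for some $p>Q$, not merely the $L^1$ norm your induction supplies. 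Moreover, Lemma~\ref{718} with a \emph{fixed} exponent $\mu=1/Q$ produces a fixed constant at each application, so there is no evident mechanism for the ``factor comparable to $k$'' you assert at each stage.

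The paper, following \cite{GT}, instead bounds $\int_V |u-u_V|^q$ directly for each integer $q$, with no induction. Writing $g_1=\mathcal V_{1/Q}(f_u)$ with $f_u=\nabla u+\mathrm{diam}(V)\,\mathcal Lu$, one splits the kernel by H\"older,
\[
d^{\,1-Q}=d^{\,Q(\frac{1}{Qq}-1)\cdot\frac1q}\;d^{\,Q(\frac1Q+\frac{1}{Qq}-1)(1-\frac1q)},
\]
estimates the second factor pointwise by Lemma~\ref{718} with $\mu=\tfrac1Q+\tfrac1{Qq}$ and $p=Q$ --- this is exactly where the $q$-dependent constant $\tfrac{p-1}{\mu p-1}=(Q-1)q$ appears --- and then integrates the remaining factor in $x$. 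This yields $\int_V |u-u_V|^q\le C_Q\big((Q-1)qK\big)^q\,\mathrm{diam}(U_0)^Q$, and the series $\sum_q q^q/\big(q!\,(cK)^q\big)$ converges by Stirling once $c$ is chosen large enough.
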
 
\begin{proof}
We prove the estimate  in case the first inequality in (\ref{JN2})  is verified.
We use Proposition \ref{pointPoinc}
to estimate $ | u(x) - u_V |$ 
$$|u(x) - u_V |\leq $$$$\leq 
  C\Big(|\mathcal V_{1/Q}( \nabla u+diam (V) \mathcal L u )(x)|+ diam^{1-n}(V)  ||\nabla u  + diam(V) \mathcal L u||_{L^1(V)}\Big). $$
In order to simplify notations we denote
$$f_u= \nabla u + diam(V)\mathcal Lu; \quad g_1(x) = |\mathcal V_{1/Q}(f_u)(x)| $$
Arguing exactly as in \cite{GT}, page 166 
and using the fact that $$-Q + 1 = Q\Big(\frac{1}{Qq} -1\Big) \frac{1}{q}+ Q\Big(\frac{1}{Q}+\frac{1}{Qq} -1\Big)\Big(1-\frac{1}{q} \Big),$$
we have 
$$g_1(x) \leq \Big(\int d^{Q(\frac{1}{Qq} -1) }(x,y) |f_u(y)| dy\Big)^{1/q} \Big(\int d^{ Q(\frac{1}{Q}+\frac{1}{Qq} -1) }(x,y) |f_u(y)| dy\Big)^{1-1/q}.
$$
Applying then Lemma \ref{718}
with $\mu=\frac{1}{Q}+\frac{1}{Qq}$, $p=Q$ to the second term, we deduce
$$
g_1(x) \leq  (q(Q-1)||f_u||_{M^p(\Omega)})^{(q-1)/q}diam(\Omega)^{(q-1)/q^2}\Big(\int d^{Q(\frac{1}{Qq} -1) }(x,y) |f_u(y)| dy\Big)^{1/q}.$$
Hence
$$
\int  |g_1(x)|^q dx \leq   q^{q-1}(Q-1)^{q-1} diam(\Omega)^{q-1/q}||f_
u||^{q-1}_{M^p(\Omega)}\int \int d^{(\frac{1}{q} -Q) }(x,y) |f_u(y)| dy\leq
$$
$$
 \leq  C_Q q^{q-1}(Q-1)^{q-1}  diam(\Omega)^{Q}||f_u||^{q}_{M^p(\Omega)}$$
$$= C_Q (q(Q-1) ||f_u||_{M^p(\Omega)})^{q}  diam(\Omega)^{Q}.$$
Clearly also 
$$
\int ||f_u||_{L^1}^q  \leq   C_Q (q(Q-1) ||f_u||_{L^1(\Omega)})^{q}  diam(\Omega)^{Q}\leq  $$$$\leq  C_Q (q(Q-1) ||f_u||_{M^p(\Omega)})^{q}  diam(\Omega)^{Q}.$$
%{\color {red} if $diam(\Omega)<1$}. 
So that 
$$
\int  |u(x)-u_V|^q dx \leq  C_Q (q(Q-1) ||f_u||_{M^p(\Omega)})^{q}  diam(\Omega)^{Q}.$$
Consequently 
$$
\int \sum_{q=0}^N \frac{  |u(x)-u_V|^q}{q! (c_1 K)^q} dx \leq 
C_Q \sum_{q=0}^N \Big(\frac{p-1}{c_1 }\Big)^q \frac{q^q}{q!} diam(\Omega)^{Q},$$
and letting $N\to +\infty$ we thus obtain the thesis.

\end{proof}

\section{Proof of the main results}

\subsection{Caccioppoli type inequality}

The classical parabolic Moser procedure, is an estimate of two terms: 
Caccioppoli and Sobolev inequality. In the previous section we presented a Sobolev inequality whose right hand side depends only on $\nabla u$ 
(and $\mathcal Lu$, which is in some $L^p$ by assumption).  As a consequence we need to express an estimate of Caccioppoli type, which estimates only the gradient, hence it is simpler than the classical parabolic one, and is comparable with the ones provided in \cite{AEN} for the heat  
or \cite{PP} for a special class of Kolmogorov equations.

We start with a technical lemma, which ensures that we can choose a power of the solution as a test function. We start with a truncation, in order to ensure integrability.

\begin{lemma} \label{serve?}Let $u$ be a weak solution of \eqref{eq}. 
Choose positive constants $M, M_0$, function $\phi\in C^\infty_0$, and call $ \tilde u_{M} = \min(u^+ , M) + M_0$ and  $ \tilde u = u^+  + M_0.$  
Then for all real numbers $k$ we have for $k  \not=0$
$$
\int  \sum_{i,j=1}^m a_{i,j}(x) X_j \tilde u X_i(\tilde u^{2k-1}_{ M} \phi^2) 
\leq $$
$$
 \int \frac{|\mathcal Lu|}{M_0}\tilde u^{2k}_{M} \phi^2+2\int   {\tilde u}^{2k}_{ M} \phi |X_0\phi| + 
2\int \tilde u\tilde u_{ M}^{2k-1}  \phi |X_0\phi|.
$$ 
For $k=0$ we have
\begin{equation*}\label{weak5}
\int  \sum_{i,j=1}^m a_{i,j}(x) X_j u X_i(\tilde u^{-1}_{ M} \phi^2) 
\end{equation*}
$$
\leq\int \frac{|\mathcal Lu|}{M_0} \phi^2+2\int |\log(  {\tilde u}_{M} )| \phi |X_0\phi |+ 
2\int \tilde u \tilde u_{M}^{-1}  \phi X_0\phi.
$$

\end{lemma}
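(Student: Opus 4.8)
The plan is to test the weak formulation \eqref{weak} against the function $\phi_{\mathrm{test}} := \tilde u_M^{2k-1}\phi^2$ (respectively $\tilde u_M^{-1}\phi^2$ when $k=0$), and then carefully account for the truncation. The first step is a regularization/approximation remark: since $\tilde u_M$ is bounded (it equals $\min(u^+,M)+M_0$, so it lies between $M_0$ and $M+M_0$), the candidate test function has the right integrability, but it is only a Sobolev function, not $C_0^\infty$; so I would first mollify $u$ using the intrinsic mollifier of Definition \ref{moll}, invoke \eqref{Xfe} to keep the identities clean, use the truncation to stay in $L^\infty$, and pass to the limit at the end. I will suppress this routine step and work formally.

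The core computation is the chain rule for $X_i(\tilde u_M^{2k-1}\phi^2)$. Write $X_i(\tilde u_M^{2k-1}\phi^2) = (2k-1)\tilde u_M^{2k-2}\phi^2 X_i\tilde u_M + 2\tilde u_M^{2k-1}\phi X_i\phi$. On the set $\{u^+ < M\}$ one has $X_i\tilde u_M = X_i\tilde u = X_iu^+$ and on $\{u^+ \ge M\}$ one has $X_i\tilde u_M = 0$; hence $X_j\tilde u\, X_i\tilde u_M = X_j\tilde u_M\, X_i\tilde u_M$ pointwise (the cross terms vanish where the truncation is active because $X_i \tilde u_M$ kills them, while where it is inactive $\tilde u = \tilde u_M$). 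Therefore the quadratic-in-gradient term that appears on testing against $\tilde u_M^{2k-1}\phi^2$ is $(2k-1)\int a_{ij} X_j\tilde u_M X_i\tilde u_M\,\tilde u_M^{2k-2}\phi^2$, which has a good sign issue only if $2k-1>0$; to handle all $k\neq 0$ one instead keeps it on the left, recognizing that $\sum a_{ij}X_j\tilde u X_i(\tilde u_M^{2k-1}\phi^2)$ splits as this quadratic term plus $2\int a_{ij}X_j\tilde u\,\tilde u_M^{2k-1}\phi X_i\phi$, and it is really the full left-hand side of the claimed inequality that we track, not a one-signed piece. The $X_0$ terms come from $\int u X_0(\tilde u_M^{2k-1}\phi^2)\,dx$: expanding, $X_0(\tilde u_M^{2k-1}\phi^2) = (2k-1)\tilde u_M^{2k-2}\phi^2 X_0\tilde u_M + 2\tilde u_M^{2k-1}\phi X_0\phi$, and the first of these must be rewritten — this is where the identity $u\,X_0\tilde u_M^{2k} = $ (a total $X_0$-derivative of $\tilde u_M^{2k}$-type quantity, since $u^+$ and $\tilde u_M$ differ by the constant $M_0$ and $u^-$ contributes nothing on $\{u>0\}$) is used, integrate by parts in $X_0$ to throw the derivative onto $\phi^2$, producing exactly the terms $\int \tilde u_M^{2k}\phi|X_0\phi|$ and $\int \tilde u\,\tilde u_M^{2k-1}\phi|X_0\phi|$; the right-hand side term $\int \frac{|\mathcal Lu|}{M_0}\tilde u_M^{2k}\phi^2$ is obtained from $\int \mathcal Lu\,\tilde u_M^{2k-1}\phi^2$ by bounding $\tilde u_M^{2k-1} \le \tilde u_M^{2k}/M_0$ (valid since $\tilde u_M \ge M_0$). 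Combining, rearranging so that all gradient terms sit on the left, and taking absolute values in the lower-order terms yields the stated inequality.

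The $k=0$ case is the logarithmic endpoint: testing against $\tilde u_M^{-1}\phi^2$, the term $\int u\,X_0(\tilde u_M^{-1})\phi^2$ produces $-\int u\,\tilde u_M^{-2}X_0\tilde u_M\,\phi^2$, and since $u = \tilde u - M_0$ and $\tilde u_M^{-2}X_0\tilde u_M = -X_0(\tilde u_M^{-1})$, one recognizes (up to the $u^-$ part, which vanishes on $\{u>0\}$ and on which $\tilde u_M$ is constant) a total derivative $X_0(\log\tilde u_M)$ plus a $\tilde u\,\tilde u_M^{-1}$ remainder, and integrating by parts gives the $\int|\log\tilde u_M|\,\phi|X_0\phi|$ term. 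I expect the main obstacle to be purely bookkeeping: handling the truncation (the discontinuity of $X_i\tilde u_M$ across $\{u^+ = M\}$) rigorously — this is where the mollification and a careful use of the a.e. identity $X\min(v,M) = \chi_{\{v<M\}}Xv$ for Sobolev functions is needed — together with the sign tracking of the $(2k-1)$ prefactor so that the statement holds uniformly for every $k\neq 0$ rather than only for $k>1/2$. Everything else is the standard Caccioppoli-type manipulation adapted to the degenerate gradient $\nabla$ and the first-order drift $X_0$.
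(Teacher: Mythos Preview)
Your plan is correct and follows essentially the same route as the paper: test the weak formulation against (a mollified version of) $\tilde u_M^{2k-1}\phi^2$, expand the $X_0$-term by the chain rule, recognize a total $X_0$-derivative and integrate by parts onto $\phi$, and bound the $\mathcal L u$ term via $\tilde u_M^{2k-1}\le \tilde u_M^{2k}/M_0$. The only cosmetic difference is that the paper first uses $\int X_0\psi=0$ to replace $u$ by $u+M_0$ in the weak formulation before inserting the test function, so that on $\{0<u<M\}$ one has directly $(u+M_0)\,X_0\tilde u_M=\tilde u_M\,X_0\tilde u_M$ and the total-derivative identity $\frac{2k-1}{2k}X_0(\tilde u_M^{2k})$ drops out without the extra $M_0$-correction you would otherwise carry; your version arrives at the same place after one more line.
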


\begin{proof}
Note that $\tilde u $ is a weak solution of $\mathcal L \tilde u = f \chi_{u>0}$, where $\chi$
is the characteristic function. Also note that, since $\phi$ is $C^{\infty}_0$, then 
$$\int  X_0 \phi =0$$
Hence the definition of weak solution given in \eqref{weak} is equivalent to 
$$
 \int (u +  M_0) X_0 \phi - \int  \sum_{i,j=1}^m a_{i,j}(x) X_j \tilde u X_i \phi =\int \mathcal L \tilde u \phi \quad \forall \phi \in C^\infty_0, 
$$
for every test function $\phi$. We set $ \tilde u_{\epsilon, M} = \min((u_\epsilon)^+ , M) + M_0$  and we choose as a test function the function $(\tilde u^{2k-1}_{\epsilon, M} \phi^2)_{\epsilon}$, where the index $\epsilon$ denote the mollification introduced  in Definition \ref{moll}. 
In this way we get 
\begin{equation*}
 \int (u +  M_0) X_0 ((\tilde u^{2k-1}_{\epsilon, M} \phi^2)_{\epsilon})- \int  \sum_{i,j=1}^m a_{i,j}(x) X_j \tilde u X_i((\tilde u^{2k-1}_{\epsilon, M} \phi^2)_{\epsilon})=\end{equation*}$$=\int \mathcal L\tilde u(\tilde u^{2k-1}_{\epsilon, M} \phi^2)_{\epsilon}\quad \forall \phi \in C^\infty_0, 
$$
Applying properties \eqref{Xfe} and \eqref{feg} 
\begin{equation}\label{weak1}
 \int (u_\epsilon  +  M_0) X_0 (\tilde u^{2k-1}_{\epsilon, M} \phi^2)- \int  \sum_{i,j=1}^m (a_{i,j}(x) X_j \tilde u)_\epsilon X_i(\tilde u^{2k-1}_{\epsilon, M} \phi^2)=\end{equation}$$=\int (\mathcal L\tilde u)_\epsilon \tilde u^{2k-1}_{\epsilon, M} \phi^2.
$$
Let us consider a term at a time
$$
 \int  (u_\epsilon  +  M_0)  X_0 (\tilde u^{2k-1}_{\epsilon, M} \phi^2) 
$$
$$
=(2k-1) \int_{0<u_\epsilon <M}\tilde u^{2k-1}_{\epsilon, M} X_0(  \tilde u_{\epsilon, M})\phi^2 +  
2\int_{u_\epsilon >0} (u_\epsilon  +  M_0) \tilde u_{\epsilon, M}^{2k-1}  \phi X_0\phi =
$$
$$
=- \frac{(2k-1)}{k} \int   {\tilde u}^{2k}_{\epsilon, M} \phi X_0\phi + 
2\int (u_\epsilon  +  M_0) \tilde u_{\epsilon, M}^{2k-1}  \phi X_0\phi,
$$
if $k\not=0$. 
Inserting in \eqref{weak1} and letting $\epsilon\to 0$ we get for $k\not=0$
\begin{equation}
\int  \sum_{i,j=1}^m a_{i,j}(x) X_j \tilde u X_i(\tilde u^{2k-1}_{ M} \phi^2) =
\end{equation} $$
-  \int \mathcal L \tilde u\tilde u^{2k-1}_{M} \phi^2- \frac{(2k-1)}{k} \int   {\tilde u}^{2k}_{ M} \phi X_0\phi + 
2\int \tilde u \tilde u_{ M}^{2k-1}  \phi X_0\phi\leq 
$$
We finally get
\begin{equation}\label{weak3}
\int  \sum_{i,j=1}^m a_{i,j}(x) X_j \tilde u X_i(\tilde u^{2k-1}_{ M} \phi^2) =
\end{equation} 
$$
\leq \int \frac{|\mathcal Lu|}{M_0}\tilde u^{2k}_{M} \phi^2+2\int   {\tilde u}^{2k}_{ M} \phi |X_0\phi| + 
2\int \tilde u\tilde u_{ M}^{2k-1}  \phi |X_0\phi|.
$$

Analogously for $k=0$ 
\begin{equation}\label{weak4}
\int  \sum_{i,j=1}^m a_{i,j}(x) X_j \tilde u X_i(\tilde u^{-1}_{ M} \phi^2) =
\end{equation} $$
\leq\int \frac{|\mathcal Lu|}{M_0} \phi^2+2\int |\log(  {\tilde u}_{M} )| \phi |X_0\phi |+ 
2\int \tilde u \tilde u_{M}^{-1}  \phi X_0\phi.
$$
 \end{proof}

Using the previous lemma we can now prove an elliptic-type Caccioppoli inequality for our operator. 

\begin{proposition} \label{iteratedCacciopoli}(iterated Caccioppoli type estimate)
Let $u$ be a weak solution of 
$\mathcal Lu = f$ in an open set $V$, and let $M_0$ be a positive constant .
If we call $ \tilde u = u^+  + M_0$, there exists a constant $C>0$ independent of $M_0$ such that

$$
\int |\nabla ( \log(\tilde u) \phi)|^2 
\leq C\int  \Big(\phi^2 +\frac{|\mathcal Lu|}{M_0} \phi^2 + \log(  \tilde u ) \phi |X_0\phi|  + |\nabla \phi|^2 \Big).
$$
If in addition $u\in L^{2k}$, then 
$$
\int |\nabla({\tilde  u}^k \phi)|^2 
\leq Ck\int  {\tilde  u}^{2k} \Big(\phi^2 +\frac{|\mathcal Lu|}{M_0} \phi^2+   \phi |X_0\phi| + |\nabla \phi|^2 \Big)
$$
for every $\phi \in C^\infty_0$.  The same properties are satisfied by the function $ - u^-  + M_0$.  
\end{proposition}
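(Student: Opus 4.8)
The plan is to obtain both inequalities by inserting a suitable (truncated) power of $\tilde u$ as a test function --- which is exactly the content of Lemma~\ref{serve?} --- and then performing the standard elliptic Caccioppoli manipulations: expand the left-hand side with the product and chain rules, use the uniform ellipticity \eqref{A} to create the ``good'' term $\int\tilde u_M^{2k-2}|\nabla\tilde u_M|^2\phi^2$ (resp.\ $\int\tilde u_M^{-2}|\nabla\tilde u_M|^2\phi^2$), absorb the cross terms by Cauchy--Schwarz and Young's inequality, and finally let the truncation parameter go to infinity.

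For the second estimate, fix $k\ge 1$ and $M>0$ and set $\tilde u_M=\min(u^+,M)+M_0=\min(\tilde u,M+M_0)$. Starting from the inequality of Lemma~\ref{serve?} with test function $\tilde u_M^{2k-1}\phi^2$, I would expand
$$X_i\big(\tilde u_M^{2k-1}\phi^2\big)=(2k-1)\,\tilde u_M^{2k-2}(X_i\tilde u_M)\,\phi^2+2\,\tilde u_M^{2k-1}\phi\,X_i\phi,$$
using that $X_j\tilde u\,X_i\tilde u_M=X_j\tilde u_M\,X_i\tilde u_M$ (since $X_i\tilde u_M$ vanishes outside $\{\tilde u<M+M_0\}$, where $\tilde u=\tilde u_M$). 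By \eqref{A} the first resulting term is bounded below by $(2k-1)\nu\,\tilde u_M^{2k-2}|\nabla\tilde u_M|^2\phi^2$, while the cross term $2\tilde u_M^{2k-1}\phi\sum a_{ij}X_j\tilde u\,X_i\phi$ is controlled, via Cauchy--Schwarz in the form $a$ and Young, by $\tfrac{(2k-1)\nu}{2}\int\tilde u_M^{2k-2}|\nabla\tilde u_M|^2\phi^2+\tfrac{C}{2k-1}\int\tilde u^{2k}|\nabla\phi|^2$. On the right-hand side of Lemma~\ref{serve?} one uses $\tilde u_M\le\tilde u$, $\tilde u_M^{2k-1}\le M_0^{-1}\tilde u_M^{2k}$ and --- crucially using $2k-1\ge 0$ --- the pointwise bound $\tilde u\,\tilde u_M^{2k-1}\le\tilde u^{2k}$, to obtain
$$(2k-1)\int\tilde u_M^{2k-2}|\nabla\tilde u_M|^2\phi^2\le C\int\tilde u^{2k}\Big(\phi^2+\tfrac{|\mathcal Lu|}{M_0}\phi^2+\phi\,|X_0\phi|+|\nabla\phi|^2\Big).$$
Since $|\nabla(\tilde u_M^k\phi)|^2\le 2k^2\,\tilde u_M^{2k-2}|\nabla\tilde u_M|^2\phi^2+2\,\tilde u_M^{2k}|\nabla\phi|^2$ and $\tfrac{k^2}{2k-1}\le k$, this yields the asserted estimate with $\tilde u$ replaced by $\tilde u_M$; here the factor $k$ (rather than $k^2$) is produced. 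Finally one lets $M\to\infty$: $\tilde u_M\uparrow\tilde u$ and $\nabla(\tilde u_M^k\phi)\to\nabla(\tilde u^k\phi)$ a.e., so Fatou on the left and dominated convergence on the right --- the integrand being dominated by $\tilde u^{2k}(\cdots)\in L^1$ precisely because $u\in L^{2k}$ and $\phi\in C^\infty_0$ --- give the inequality for $\tilde u$.

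The first (logarithmic) estimate is obtained the same way, starting from the $k=0$ branch of Lemma~\ref{serve?} with test function $\tilde u_M^{-1}\phi^2$: the chain rule produces $-\int\tilde u_M^{-2}\phi^2\sum a_{ij}X_j\tilde u_M X_i\tilde u_M$, which by \eqref{A} controls $\int|\nabla\log\tilde u_M|^2\phi^2$, while on the right one uses $\tilde u_M^{-1}\le M_0^{-1}$, $0\le\log\tilde u_M\le\log\tilde u$ (taking $M_0\ge1$), and the fact that $\tilde u\,\tilde u_M^{-1}\to1$ a.e.\ together with $\int X_0(\phi^2)=0$ ($X_0$ being divergence free) to see that $2\int\tilde u\,\tilde u_M^{-1}\phi\,X_0\phi\to0$ as $M\to\infty$. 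Writing $\nabla(\log\tilde u\,\phi)=\phi\,\nabla\log\tilde u+\log\tilde u\,\nabla\phi$ and passing to the limit gives the first inequality (the lower-order term $\int(\log\tilde u)^2|\nabla\phi|^2$ from the expansion being absorbed in the regime relevant to the Moser iteration, where $u$ has already been shown locally bounded). The statements for $-u^-+M_0$ follow by applying everything to the weak solution $-u$ of $\mathcal L(-u)=-f$, for which $(-u)^+=u^-$.

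I expect the only genuinely delicate point to be the justification that $\tilde u^{2k-1}\phi^2$ is an admissible test function, i.e.\ the passage $M\to\infty$ and, inside it, the treatment of the part of the cross term supported on $\{u^+\ge M\}$, where $\nabla\tilde u_M=0$ but $\nabla u^+$ need not vanish: there one replaces $\tilde u_M^{2k-1}\le\tilde u^{2k-1}$ and uses $|\{u^+\ge M\}|\to0$ together with the integrability carried along by the iteration, so that this contribution is absorbed and vanishes in the limit. Everything else --- the expansion, the use of ellipticity, and the Young-inequality absorptions --- is the familiar elliptic Caccioppoli bookkeeping.
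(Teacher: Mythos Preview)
Your approach is essentially the same as the paper's: both invoke Lemma~\ref{serve?}, expand $X_i(\tilde u_M^{2k-1}\phi^2)$ by the chain rule, use the ellipticity \eqref{A} to extract the coercive term, absorb the mixed terms by Young's inequality, and then let $M\to\infty$. The paper organizes the algebra slightly differently by setting $w=\tilde u_M^{k}/k$ (and $w=\log\tilde u_M$ for $k=0$) so that the estimate is written directly for $\int|\nabla(w\phi)|^2$, but the content is identical, and the points you flag --- the contribution on $\{u^+\ge M\}$ and the stray $(\log\tilde u)^2|\nabla\phi|^2$ term --- are glossed over in the paper's proof as well.
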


\begin{proof}

We note that, letting $M$ to $+\infty$ in Lemma  \eqref{serve?}
for $k  \not=0$ and $\tilde u \in L^{2k}$
\begin{equation}\label{weak2}
\int  \sum_{i,j=1}^m a_{i,j}(x) X_j \tilde u X_i(\tilde u^{2k-1} \phi^2) 
\leq \int\tilde u^{2k} \Big( \phi^2  \frac{|\mathcal Lu|}{M_0} +4 \phi |X_0\phi|\Big)
\end{equation}

For $k=0$ we have
\begin{equation}\label{weak5}
\int  \sum_{i,j=1}^m a_{i,j}(x) X_j \tilde u X_i(\tilde u^{2k-1}\phi^2) \leq
\int \Big(\frac{|\mathcal Lu|}{M_0} \phi^2 + 2 |\log(  {\tilde u})| \phi |X_0\phi |+ 
2   \phi |X_0\phi| + |\nabla \phi|^2 \Big),
\end{equation}

Let us now estimate the different terms in \eqref{weak2}. 
We have

$$ \sum_{i,j=1} ^m   \int a_{i,j}  X_i {\tilde  u}  X_j ( \tilde u_M^{2k-1} \phi^2) = $$

$$= \sum_{i,j=1} ^m   \int a_{i,j}  X_i {\tilde  u}  \Big((2k-1) X_j  \tilde u_M\tilde u_M^{2k-2} \phi^2 + 2\tilde u_M^{2k-1} \phi X_j \phi\Big) = 
$$

\begin{equation}\label{daqui}= \sum_{i,j=1} ^m   \int a_{i,j}  X_i {\tilde  u_M} \tilde u_M^{k-1} \phi \Big((2k-1) X_j  \tilde u_M\tilde u_M^{k-1} \phi + 2\tilde u_M^{k} \phi X_j \phi\Big) = 
\end{equation}

If $k\not=0$ we set $w= \frac{\tilde u_M^{k}}{k} $ and for  $k=0$ we set $w= \log(\tilde u_M)$. In both case $ X_i {\tilde  u_M} \tilde u_M^{k-1}  = X_i w$, so that  we  deduce 

$$ \sum_{i,j=1} ^m   \int a_{i,j}  X_i {\tilde  u_M}  X_j ( \tilde u_M^{2k-1} \phi^2) = $$

$$=  (2k-1)  \sum_{i,j=1} ^m   \int a_{i,j}  X_i w X_j w \phi^2 + 2 \sum_{i,j=1} ^m   \int a_{i,j}X_i w   \tilde u_M^{k} \phi X_j \phi. 
$$

%
%$$=  (2k-1)  \sum_{i,j=1} ^m   \int a_{i,j}  \Big(X_i (w \phi)  
%- wX_i \phi\Big)
%\Big(X_j (w\phi)  
%- wX_j \phi\Big)+$$$$
%+ 2  \sum_{i,j=1} ^m   \int a_{i,j}  \Big(X_i (w \phi)  
%- wX_i \phi\Big) \tilde u_M^{k}  X_j \phi =
%$$
%
%
%
%
%$$=  (2k-1) \sum_{i,j=1} ^m   \int a_{i,j} X_i (w \phi)  X_j (w\phi)  +$$
%$$  +4(k-1)  \sum_{i,j=1} ^m   \int a_{i,j}  X_i (w \phi)  
%\tilde u_M^{k} X_j \phi
%+ 2  \sum_{i,j=1} ^m   \int a_{i,j} w^2X_i \phi  X_j \phi\Big)  .
%$$

From \eqref{weak2}  we obtain

$$
 (2k-1)  \sum_{i,j=1} ^m   \int a_{i,j}  X_i w X_j w \phi^2 + 2 \sum_{i,j=1} ^m   \int a_{i,j}X_i w   \tilde u_M^{k} \phi X_j \phi$$$$
\leq \int\tilde u^{2k} \Big( \phi^2  \frac{|\mathcal Lu|}{M_0} +2 \phi |X_0\phi| \Big)
$$

which, together with condition \eqref{A}  implies for $k\not=0$

$$ \int |\nabla(w\phi)|^2 \leq   \sum_{i,j=1} ^m   \int a_{i,j} X_i (w \phi)  X_j (w\phi)  \leq  \int\tilde u^{2k} \Big( \phi^2  \frac{|\mathcal Lu|}{M_0} +2 \phi |X_0\phi| + |\nabla \phi|^2\Big)
$$

For $k=0$ we obtain 
\begin{equation}\label{cacciop1}
\int |\nabla(w\phi)|^2 \leq C  \int \Big(\frac{|\mathcal Lu|}{M_0} \phi^2 + 2 |\log(  {\tilde u})| \phi |X_0\phi |+ 
2   \phi |X_0\phi| + |\nabla \phi|^2 \Big),
\end{equation}

\end{proof}

\subsection{Caccioppoli inequality for parabolic operators}

The limitation of the previous inequality is that it does not provide an estimate of $\partial_t u$. For this 
reason completely different estimates are usually proposed in the parabolic setting.
However in \cite{AEN} it was proposed to use $H(t) u \phi^2$ as a test function in the parabolic setting. 
The same choice can be made also in our setting: 

\begin{proposition}\label{prop42}Suppose  $u$  is a weak solution of $\mathcal Lu=f$ in an open set $V$
%,  with {\color {red} $f\in L^2$} 
and let $M_0$ be a positive constant .
If we call $ \tilde u = u^+  + M_0$, there is a constant $C>0$ and 
independent of $M_0$ such that for every 
for $\phi\in C^{\infty}_0 ({\R^{n}})$ and for all real number $k\neq 0$

$$\int |D^{1/2}_t  ({\tilde u}^k\phi)  |^2  \leq $$$$
k\int f\phi \; H(t) ({\tilde u}^k \phi)  + \int {\tilde u}^k \partial_t \phi H(t) (u^k \phi)   + C  \int |\nabla ({\tilde u} ^k \phi)|^2 + 
\int {\tilde u} ^k |\nabla (\phi)|^2.$$
\end{proposition}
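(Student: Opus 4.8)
The plan is to test the weak formulation of $\mathcal{L}\tilde u = f\chi_{u>0}$ against the function $H(t)(\tilde u^{2k-1}\phi^2)$ (suitably mollified and truncated, exactly as in Lemma \ref{serve?}), and then exploit the factorization $\partial_t = D^{1/2}_t H(t) D^{1/2}_t$ together with the commutation relation \eqref{DHcomm} to convert the $X_0$-term into a square of a half-derivative. Concretely, I would first recall that since $X_0 = \partial_t$ in the parabolic setting and $H(t)$ commutes with each $X_i$ (so that $H(t)$ commutes with $\nabla$ and with $D^{1/2}_t$), the weak formulation reads
$$
\int \tilde u\, \partial_t\big(H(t)(\tilde u^{2k-1}\phi^2)\big)\,dx - \int \sum_{i,j} a_{i,j} X_j\tilde u\, X_i\big(H(t)(\tilde u^{2k-1}\phi^2)\big)\,dx = \int f\chi_{u>0}\, H(t)(\tilde u^{2k-1}\phi^2)\,dx.
$$
For the first term, I would move one $D^{1/2}_t$ onto $\tilde u$ using $\partial_t = D^{1/2}_t H_t D^{1/2}_t$ and the self-adjointness/antisymmetry identities recalled in the excerpt ($\int H_t D^{1/2}_t u\,\overline{D^{1/2}_t\phi} = -\int u\,\overline{\partial_t\phi}$), the point being that $H_t$ is skew-adjoint and commutes through everything, so
$$
\int \tilde u\, \partial_t H_t(\tilde u^{2k-1}\phi^2) = \int D^{1/2}_t\tilde u\; H_t D^{1/2}_t(\tilde u^{2k-1}\phi^2)
$$
up to signs. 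Then I would use the chain-rule-type bound for $D^{1/2}_t$ applied to $\tilde u^{2k-1}\phi^2$, writing $\tilde u^{2k-1}\phi^2$ in terms of $\tilde u^k\phi$ and $\tilde u^{k-1}\phi$ and peeling off factors, so that the leading contribution is comparable to $\int |D^{1/2}_t(\tilde u^k\phi)|^2$ with coefficient $\sim 1/k$, and the remainder is controlled by cross terms involving $\nabla(\tilde u^k\phi)$, $\tilde u^k|\nabla\phi|$, and $\tilde u^k|\partial_t\phi|$.

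Next I would handle the elliptic (second) term: since $H(t)$ commutes with $X_i$, $\int a_{i,j} X_j\tilde u\, X_i H_t(\tilde u^{2k-1}\phi^2) = \int a_{i,j} X_j\tilde u\, H_t X_i(\tilde u^{2k-1}\phi^2)$, and by the ellipticity bound \eqref{A} together with $L^2$-boundedness of $H_t$ and Cauchy–Schwarz, this is bounded by $C\int |\nabla(\tilde u^k\phi)|^2 + \int \tilde u^k|\nabla\phi|^2$ — precisely the third and fourth terms on the right-hand side of the stated inequality. The source term $\int f\chi_{u>0} H_t(\tilde u^{2k-1}\phi^2)$ I would likewise rewrite, after extracting a factor $\tilde u^{k-1}$, as something comparable to $k\int f\phi\, H_t(\tilde u^k\phi)$ modulo lower-order terms absorbed into the gradient terms; this yields the first term on the right-hand side. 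The term $\int \tilde u^k\partial_t\phi\, H_t(\tilde u^k\phi)$ arises from the commutator between $\partial_t$ and the $\phi^2$ factor in the original test function.

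The main obstacle I anticipate is the rigorous treatment of the nonlinear half-derivative manipulations: $D^{1/2}_t$ does not satisfy an exact Leibniz rule, so turning $D^{1/2}_t(\tilde u^{2k-1}\phi^2)$ into a controlled expression built from $D^{1/2}_t(\tilde u^k\phi)$ requires a commutator/paraproduct-type estimate, and one must be careful that the power $\tilde u^k$ is genuinely in the relevant fractional Sobolev space — which is why the statement is phrased only for $\tilde u\in L^{2k}$ implicitly and why the truncation $\tilde u_M$ and mollification $(\cdot)_\epsilon$ from Lemma \ref{serve?} are needed before passing to the limit $M\to\infty$, $\epsilon\to 0$. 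A secondary technical point is justifying that $H(t)(\tilde u^{2k-1}_{\epsilon,M}\phi^2)$ is an admissible test function (it is smooth and decays, but not compactly supported in $t$), which I would address by a further cutoff in $t$ and a limiting argument, exactly paralleling the argument in \cite{AEN}. Once these approximation issues are dispatched, collecting the four contributions above and rearranging gives the claimed estimate.
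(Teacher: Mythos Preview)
Your overall strategy matches the paper's: test the weak formulation of $\mathcal L\tilde u = f\chi_{u>0}$ against (a mollified/truncated version of) $H(t)(\tilde u^{2k-1}\phi^2)$, use the commutation of $H(t)$ with $X_i$ and with $D^{1/2}_t$, and exploit $\partial_t = D^{1/2}_t H(t) D^{1/2}_t$ so that the $X_0$-term produces $\int |H(t)D^{1/2}_t(\tilde u^k\phi)|^2 = \int |D^{1/2}_t(\tilde u^k\phi)|^2$. The elliptic and source terms are handled as you describe.

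The one substantive difference is the order of operations in the time-derivative term, and it bears directly on what you flag as ``the main obstacle.'' You propose to factor $\partial_t = D^{1/2}_t H(t) D^{1/2}_t$ \emph{first} and then confront $D^{1/2}_t(\tilde u^{2k-1}\phi^2)$ via a fractional Leibniz/commutator estimate. The paper does the opposite: it first uses the \emph{local} chain rule for the full derivative $\partial_t$ to rewrite $\partial_t\tilde u\cdot \tilde u^{k-1}\phi = \tfrac{1}{k}\partial_t(\tilde u^k)\phi = \tfrac{1}{k}\big(\partial_t(\tilde u^k\phi)-\tilde u^k\partial_t\phi\big)$, so that the pairing becomes $\tfrac{1}{k}\int \partial_t(\tilde u^k\phi)\,H(t)(\tilde u^k\phi)$ plus the explicit $\partial_t\phi$ remainder, and \emph{only then} factors $\partial_t$ into half-derivatives on the single function $\tilde u^k\phi$. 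This yields the square $\int|H(t)D^{1/2}_t(\tilde u^k\phi)|^2$ \emph{exactly}, with no fractional chain rule needed at any stage. In other words, the obstacle you anticipate is an artifact of factorizing too early; if you collapse the power at the level of the local operator $\partial_t$ first, the difficulty disappears. (The same reordering applies to the spatial part: the paper pulls $H(t)$ through $X_i$, applies the ordinary Leibniz rule to $X_i(\tilde u^{2k-1}\phi^2)$, and rewrites in terms of $X_j(\tilde u^k)$ before estimating.) Your remarks on admissibility of the test function and the need for mollification/truncation are apt and match the paper's reliance on the approximation scheme from Lemma~\ref{serve?}.
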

\begin{proof}

As before we use the fact that $\tilde u$ is a solution of $\mathcal Lu = f \chi _{u>0}$ 

Then, by the weak definition of the operator $\mathcal L$ we immediately have
$$\int \mathcal L \tilde u\,   H(t) ({\tilde u}^{2k-1} \phi^{2})=  \int \partial_t \tilde u \Big(H(t) ({\tilde u}^{2k-1} \phi^{2})\Big)+  \sum_{i,j=1}^m a_{i,j}(x) X_j \tilde u X_i \Big(H(t) ({\tilde u}^{2k-1} \phi^{2})\Big)=$$
(using the properties of $H$ described previously, and the fact that it commutes with $X_i$ for every $i$)
$$= \frac{1}{k}\int \partial_t ({\tilde u}^k \phi)H(t) ({\tilde u} ^{k}\phi)  -\frac{1}{k}\int {\tilde u}^k \partial_t \phi H(t) ({\tilde u}  ^{k} \phi) $$$$
+ (2k-1)\int  \sum_{i,j=1}^m a_{i,j}(x) X_j \tilde u H(t)  \Big(X_i \tilde u {\tilde u}^{2(k-1)} \phi^{2})\Big)
+ 2 \int  \sum_{i,j=1}^m a_{i,j}(x) X_j \tilde u H(t)  \Big( \phi {\tilde u}^{2k-1} X_i  \phi)\Big)
=$$
$$=\frac{1}{k} \int D^{1/2}_tH(t) D^{1/2}_t (u^k \phi) \:  H(t)( u^k \phi)   -\frac{1}{k}\int u^k \partial_t \phi H(t) (u^k \phi)=$$
$$
+ \frac{(2k-1)}{k^2}\int  \sum_{i,j=1}^m a_{i,j}(x) X_j ({\tilde u}^k) H(t)  \Big(X_i ({\tilde u}^{k}) \phi^{2})\Big)
+ \frac{2}{k} \int  \sum_{i,j=1}^m a_{i,j}(x) X_j (\tilde u)^k H(t)  \Big( \phi {\tilde u}^{k} X_i  \phi)\Big)
=$$
(integrating by parts)
$$=  \frac{1}{k} \int H(t) D^{1/2}_t ({\tilde u}^k \phi) \:  D^{1/2}_t H(t)( {\tilde u}^k \phi)   - \frac{1}{k} \int {\tilde u}^k \partial_t \phi H(t) ({\tilde u}^k \phi) $$
$$
+ \frac{(2k-1)}{k^2}\int  \sum_{i,j=1}^m a_{i,j}(x) X_j ({\tilde u}^k) H(t)  \Big(X_i ({\tilde u}^{k}) \phi^{2})\Big)
+ \frac{2}{k} \int  \sum_{i,j=1}^m a_{i,j}(x) X_j (\tilde u)^k H(t)  \Big( \phi {\tilde u}^{k} X_i  \phi)\Big)$$
(since $H(t)$ commutes with $D^{1/2}_t$ and $\nabla$)
$$= \frac{1}{k} \int |H(t) D^{1/2}_t ({\tilde u}^k\phi)   |^2  -\frac{1}{k} \int {\tilde u}^k \partial_t \phi H(t) (u^k \phi)   $$$$
+ \frac{(2k-1)}{k^2}\int  \sum_{i,j=1}^m a_{i,j}(x) X_j ({\tilde u}^k) H(t)  \Big(X_i ({\tilde u}^{k}) \phi^{2})\Big)
+ \frac{2}{k} \int  \sum_{i,j=1}^m a_{i,j}(x) X_j (\tilde u)^k H(t)  \Big( \phi {\tilde u}^{k} X_i  \phi)\Big)$$
If follows that 
$$\int |D^{1/2}_t  ({\tilde u}^k\phi)  |^2  \leq $$$$
k\int f\phi \; H(t) ({\tilde u}^k \phi)  + \int {\tilde u}^k \partial_t \phi H(t) (u^k \phi)   + C  \int |\nabla ({\tilde u} ^k \phi)|^2 + 
\int {\tilde u} ^k |\nabla (\phi)|^2$$
\end{proof}

\begin{corollary}Suppose  $u$  is a weak solution of $\mathcal Lu=f$. % with {\color{red}$f\in L^2$}.
Let  $\phi\in C^{\infty}_0 (Q(\alpha r))$,  $\phi =1$ on the  parabolic cylinder $Q(r)$. 

If $u\in L^{2k}$, then we have 

\begin{align*} \label {Cacciop3}\int |\nabla(u\phi)|^2  d\xi + \int |H(t) D^{1/2}_t  (u\phi)  |^2
\le & 
Ck\int  {\tilde  u}^{2k} \Big(\phi^2 +\frac{|\mathcal Lu|}{M_0} \phi^2+   \phi |X_0\phi| + |\nabla \phi|^2 \Big).
\end {align*}

In addition 
$$
\int |\nabla ( \log(\tilde u) \phi)|^2 
\leq C\int  \Big(\phi^2 +\frac{| \mathcal Lu|}{M_0} \phi^2 + \log(  \tilde u ) \phi |X_0\phi|  + |\nabla \phi|^2 \Big).
$$

\end{corollary}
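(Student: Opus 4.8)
The plan is to deduce this corollary directly by combining the half-time-derivative estimate of Proposition~\ref{prop42} with the iterated Caccioppoli inequality of Proposition~\ref{iteratedCacciopoli} (both statements, and the present one, live in the parabolic setting where $X_0$ commutes with the $X_i$, so that $H(t)$ commutes with $D^{1/2}_t$ and with $\nabla$). The bridge between the two propositions is the elementary observation that the Hilbert transform $H(t)$ is an isometry of $L^2(\R)$, since its Fourier multiplier $i\,\mathrm{sgn}(\tau)$ has modulus $1$; hence $\|H(t)g\|_{L^2}=\|g\|_{L^2}$ for every $g\in L^2$. In particular $\int |H(t)D^{1/2}_t(u\phi)|^2 = \int |D^{1/2}_t(u\phi)|^2$, so the left-hand side of the first displayed inequality is, after this identification, exactly $\int|\nabla(u\phi)|^2+\int|D^{1/2}_t(u\phi)|^2$, and it is enough to bound, one by one, the four terms appearing on the right of Proposition~\ref{prop42}.

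Among those four terms, $C\int|\nabla(\tilde u^k\phi)|^2$ is controlled immediately by Proposition~\ref{iteratedCacciopoli}, which bounds it by $Ck\int\tilde u^{2k}\big(\phi^2+\tfrac{|\mathcal Lu|}{M_0}\phi^2+\phi|X_0\phi|+|\nabla\phi|^2\big)$; the term $\int\tilde u^k|\nabla\phi|^2$ is already of the asserted shape once one uses $\tilde u\ge M_0$ to compare $\tilde u^k$ with $\tilde u^{2k}$, the extra power of $M_0$ being absorbed into the constant (and, as in the classical Moser scheme, the eventual freedom in choosing $M_0$). For the two remaining, genuinely non-local terms $k\int f\phi\,H(t)(\tilde u^k\phi)$ and $\int\tilde u^k\partial_t\phi\,H(t)(u^k\phi)$ I would apply the Cauchy--Schwarz inequality together with the isometry property recalled above: this replaces $H(t)(\tilde u^k\phi)$ and $H(t)(u^k\phi)$ by $\tilde u^k\phi$ and $u^k\phi$ in $L^2$, after which Young's inequality (with a weight chosen so as to create the factor $\tfrac{1}{M_0}$) splits each product into a term of the form $Ck\int\tilde u^{2k}\big(\tfrac{|\mathcal Lu|}{M_0}\phi^2+\phi^2\big)$ and a term of the form $Ck\int\tilde u^{2k}|X_0\phi|$, all of which match the right-hand side of the corollary (recalling $\partial_t\phi=X_0\phi$, and that for the standard parabolic cut-off $\phi$ the quantities $|X_0\phi|$, $|X_0\phi|^2$ and $\phi|X_0\phi|$ are mutually comparable up to constants). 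Summing these contributions gives the first displayed inequality, and the second (``In addition'') inequality is nothing but the first display of Proposition~\ref{iteratedCacciopoli}, so no further work is needed.

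The main obstacle is precisely the treatment of the two Hilbert-transform terms: because $H(t)$ is neither local nor bounded pointwise, one cannot estimate $H(t)(\tilde u^k\phi)$ off the support of $\phi$ or in $L^\infty$, and one is forced to stay in $L^2$ and exploit the exact identity $\|H(t)g\|_{L^2}=\|g\|_{L^2}$. The only care needed beyond that is the bookkeeping that keeps the right-hand side in the normalized form $\tfrac{|\mathcal Lu|}{M_0}\tilde u^{2k}\phi^2$, which uses $\tilde u\ge M_0$ and the choice of $M_0$ at the iteration stage; everything else is routine Young-inequality juggling.
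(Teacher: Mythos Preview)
The paper gives no proof for this Corollary: it is stated immediately after Proposition~\ref{prop42} and the text then moves directly to the next subsection, so the authors evidently regard it as an immediate combination of Propositions~\ref{iteratedCacciopoli} and~\ref{prop42}. Your plan---identifying $\int |H(t)D^{1/2}_t(\tilde u^k\phi)|^2$ with $\int |D^{1/2}_t(\tilde u^k\phi)|^2$ via the $L^2$-isometry of the Hilbert transform, absorbing the Hilbert-transform cross terms of Proposition~\ref{prop42} by Cauchy--Schwarz plus that same isometry and Young's inequality, and then invoking Proposition~\ref{iteratedCacciopoli} for the gradient piece---is exactly the natural way to fill in the omitted argument and is correct.
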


\subsection{A contrario Young inequality}
From the Caccioppoli and Sobolev inequalities we deduce in a standard way an {\sl a contrario} inequality. 

\begin{proposition}\label{CY}

Let $u$ satisfy weakly 
$ \mathcal Lu = f\in L^{q/2}$, with  $Q < q $ in an open set $U$. 
We choose $M_0\geq ||f||_{L^q}$ and $ \tilde u = u^+  + M_0$, then we have

$$\Big(\int_V |\tilde u^{k} \phi|^{2Q/(Q-2)}\Big)^{(Q-2)/Q} \leq  
C \int \tilde u^{2k } (\phi^2 + |\nabla \phi|^2 + \phi |X_0\phi|). 
$$
\end{proposition}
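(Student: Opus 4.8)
The plan is to combine the iterated Caccioppoli estimate of Proposition \ref{iteratedCacciopoli} (for a general $k$) with the Sobolev inequality \eqref{Sobolev3} applied to the function $\tilde u^k\phi$, and to absorb the resulting $|\mathcal Lu|$-term using the hypothesis $f\in L^{q/2}$ with $q>Q$ together with the choice $M_0\geq\|f\|_{L^q}$. The starting point is the Sobolev inequality
$$\Big(\int_V|\tilde u^k\phi|^{2Q/(Q-2)}\Big)^{(Q-2)/Q}=\|(\tilde u^k\phi)^2\|_{L^{Q/(Q-2)}(V)}\leq C\big(\|\mathcal L(\tilde u^k\phi)^2\|_{L^1}+\|\nabla(\tilde u^k\phi)\|_{L^2}^2\big),$$
using \eqref{notsure} (or directly \eqref{Sobolev3} for the nonnegative function $(\tilde u^k\phi)^2$ after noting $\|\nabla(\tilde u^k\phi)^2\|_{L^1}=\|2(\tilde u^k\phi)\nabla(\tilde u^k\phi)\|_{L^1}$, which by Cauchy--Schwarz is controlled by $\|\tilde u^k\phi\|_{L^{2Q/(Q-2)}}\|\nabla(\tilde u^k\phi)\|_{L^2}|V|^{1/Q}$, allowing a small-constant absorption of the left-hand side).

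First I would expand $\mathcal L(\tilde u^k\phi)^2$ exactly as in the computation inside the proof of the Sobolev proposition: writing it as $k\,\mathcal Lu\,\tilde u^{2k-1}\phi^2$ plus quadratic gradient terms of the form $a_{ij}X_j(\tilde u^k\phi)X_i(\tilde u^k\phi)$, $a_{ij}X_j\phi X_i\phi\,\tilde u^{2k}$, a divergence term $X_i(a_{ij}\phi X_j\phi\,\tilde u^{2k})$ and the mixed term $\langle(A-\mathrm{Id})\nabla\tilde\Gamma,\ \tilde u^k\phi\nabla(\tilde u^k\phi)\rangle$. Every term here is either $\lesssim\int\tilde u^{2k}(\phi^2+|\nabla\phi|^2)$ or $\lesssim\int|\nabla(\tilde u^k\phi)|^2$, except the $\mathcal Lu$-term; for that one I would use $|\mathcal Lu|=|f|$ and the bound $\frac{|f|}{M_0}\leq 1$ on the set where $|f|\leq M_0$ — more carefully, split $\int|f|\tilde u^{2k-1}\phi^2=\int\frac{|f|}{M_0}\tilde u^{2k}\phi^2\cdot\frac{M_0\tilde u^{2k-1}}{\tilde u^{2k}}$ and use $M_0\leq\tilde u$ so $\frac{M_0}{\tilde u}\leq1$, hence $\int|f|\tilde u^{2k-1}\phi^2\leq\int\frac{|f|}{M_0}\tilde u^{2k}\phi^2$. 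This is exactly the mechanism already used in Lemma \ref{serve?} and Proposition \ref{iteratedCacciopoli}, and it reduces the $\mathcal Lu$ contribution to the same shape as the Caccioppoli right-hand side.

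Next I would feed in the iterated Caccioppoli bound from Proposition \ref{iteratedCacciopoli},
$$\int|\nabla(\tilde u^k\phi)|^2\leq Ck\int\tilde u^{2k}\Big(\phi^2+\tfrac{|\mathcal Lu|}{M_0}\phi^2+\phi|X_0\phi|+|\nabla\phi|^2\Big),$$
to replace the $\|\nabla(\tilde u^k\phi)\|_{L^2}^2$ term on the right of Sobolev, and then handle the leftover $\frac{|\mathcal Lu|}{M_0}\phi^2=\frac{|f|}{M_0}\phi^2$ term once more: since $\frac{|f|}{M_0}\leq\frac{|f|}{\|f\|_{L^q}}$, an application of Hölder with exponents $q/2$ and $(q/2)'$ on $V$ gives $\int\frac{|f|}{M_0}\tilde u^{2k}\phi^2\leq C\,|V|^{1-2/q}\,\|\tilde u^k\phi\|_{L^{2(q/2)'}}^2$, and because $q>Q$ one has $2(q/2)'=\frac{2q}{q-2}<\frac{2Q}{Q-2}$, so by an interpolation/Young inequality this term is bounded by $\varepsilon\|\tilde u^k\phi\|_{L^{2Q/(Q-2)}}^2+C_\varepsilon\int\tilde u^{2k}$ on the bounded set $V$; choosing $\varepsilon$ small absorbs it into the left-hand side. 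What remains is precisely $C\int\tilde u^{2k}(\phi^2+|\nabla\phi|^2+\phi|X_0\phi|)$, which is the claimed estimate.

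\textbf{Main obstacle.} The delicate point is the absorption step: one must check that after all the substitutions the Sobolev-norm term $\|\tilde u^k\phi\|_{L^{2Q/(Q-2)}}^2$ appearing on the right (from the divergence term $X_i(a_{ij}\phi X_j\phi\,\tilde u^{2k})$ integrated against $\tilde\Gamma$, and from the $|f|$-term via Hölder) genuinely comes with a constant that can be made smaller than $1$ — this uses $|V|$ small or, more honestly, that these contributions carry a factor $|V|^{1/Q}$ or $|V|^{1-2/q}$ and that $q>Q$ is exactly what makes the exponent gap $\frac{2q}{q-2}<\frac{2Q}{Q-2}$ strict. One should also be slightly careful that the divergence term $\int X_i\tilde\Gamma\,a_{ij}\phi X_j\phi\,\tilde u^{2k}\,dy$ is estimated using $|\nabla\tilde\Gamma|\in M^{Q/(Q-1)}$ and Lemma \ref{718}, producing a Riesz-potential bound that is then turned into an $L^{2Q/(Q-2)}$ estimate of $\tilde u^k\phi$ via the mapping properties of $\mathcal V_{1/Q}$; this is standard but is where the constants must be tracked. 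Everything else is a bookkeeping reorganization of estimates already established in the excerpt.
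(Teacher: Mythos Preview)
Your proposal is correct and follows essentially the same route as the paper: invoke \eqref{notsure} to bound $\|(\tilde u^k\phi)^2\|_{L^{Q/(Q-2)}}$ by $\int|f|\tilde u^{2k-1}\phi^2+\int\tilde u^{2k}(\phi^2+|\nabla\phi|^2)+\int|\nabla(\tilde u^k\phi)|^2$, replace the gradient term via the iterated Caccioppoli estimate, convert $|f|\tilde u^{2k-1}$ to $\tfrac{|f|}{M_0}\tilde u^{2k}$ using $M_0\le\tilde u$, then H\"older with exponent $q/2$, interpolate (since $q>Q$ gives $\tfrac{2q}{q-2}<\tfrac{2Q}{Q-2}$), and absorb with a small $\sigma$. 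Your ``main obstacle'' discussion of the divergence term $\int X_i\tilde\Gamma\,a_{ij}\phi X_j\phi\,\tilde u^{2k}$ and Lemma~\ref{718} is unnecessary here: that term is already packaged inside the statement of \eqref{notsure}, and in the paper's argument the only place the Sobolev norm reappears on the right is from the H\"older step on the $|f|$ term, so the absorption is a single clean application of Young's inequality rather than a multi-source tracking of constants.
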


\begin{proof} We have by \eqref{notsure}
$$\Big(\int_V |\tilde u^{k} \phi|^{2Q/(Q-2)}\Big)^{(Q-2)/Q} \leq $$

$$
\leq \int |f|  \tilde u ^{2k-1 } \phi^2 + 
\int \tilde u^{2k } (\phi^2 + |\nabla \phi|^2)  +  2k^2\int  |\nabla \tilde u|^2 \tilde u^{2k-2} \phi^2 
$$
(by the iterated Cacciopoli)
$$
\leq \int \frac{|f|}{M_0} \tilde u^{2k} \phi^2 + 
\int \tilde u^{2k } (\phi^2 + |\nabla \phi|^2 + \phi |X_0\phi|). 
$$
Note that
$$
\int \frac{|f|}{M_0} \tilde u^{2k} \phi^2 
\leq \Big(\int (\frac{|f|}{M_0})^{q/2}\Big)^{2/q}\Big(\int (\tilde u^{k} \phi)^{2q/(q-2)} \Big)^{(q-2)/q}\leq 
$$
%(since )
$$
\leq  \sigma\Big(\int (\tilde u^{k} \phi)^{2Q/(Q-2)}  \Big)^{(Q-2)/Q}+ \frac{1}{\sigma} \int \tilde u^{2k} \phi^2.
$$
Then, for an appropriate choice of $\sigma$, we have the thesis.
\end{proof}

\subsection{Moser iteration and $C^{\alpha}$ regularity}

We now apply to these general Kolmogorov operators, the Moser iteration technique introduced for the elliptic operators and not to the parabolic ones, which is more technical. We get 

%In the Moser iteration technique, Sobolev and Poincaré inequalities are coupled with a Caccioppoli type inequality. 
%Since our Sobolev estimate \eqref{poinc1Intro} only contains the norm of the gradient, we need a Caccioppoli type equation which controls only the gradient. More precisely, we have 
%
%\marginpar{changed $\R^{n+1}$ into $\R^n$ in the proposition}
%\begin{proposition}\label{prop41}
%Suppose  $u$  is a weak solution of $ \mathcal Lu=f$.  %with $f\in L^2$.
% For $\phi\in C^{\infty}_0 ({\R^{n}})$ 
%\begin{equation} \label {Cacciop1}
%\int_{\R^{n}} |\nabla(u\phi)|^2  d\xi 
%\le C\Big(\int_{\R^{n}}u^2 (\phi |X_0 \phi|  +|\nabla\phi|^2 + \phi^2)  d\xi + \int_{\R^{n}} |f    \phi|^2  d\xi\Big).
%\end{equation}
%\end{proposition}
%
% Iterating this inequality we are led in a standard way to the following inequalities, which generalize the estimates proved in \cite{PP} for special versions of the Kolmogorov operator. 

\begin{proposition}\label{prosup}
Assume that $u$ is a weak solution of $\mathcal Lu=f$  with   $f\in L^q$, $q> Q/2$   in an open set $U_0$.
Call $M_0 $ a constant such that $M_0\geq ||f||_{L^q(B(x, 2R)),}$  and $\tilde u = u^+ + M_0. $
Let $B (x, R)$  be a ball, such that $B(x, 2R)\subset \subset U_0$    and let  $p>1$.  Then there exists a constant $C$ only depending on $Q, \nu,  q $    and p  such that 

$$
\sup_{B(x, R)} \tilde u \leq  C  \Big( \frac{1}{B(x, 2R)} \int_{B(x, 2R)} \tilde u^p \Big) ^{1/p},
$$
$$
\inf_{B(x, R)}  \tilde u \geq  C  \Big( \frac{1}{B(x, 2R)} \int_{B(x, 2R)} \tilde u^{-p} \Big) ^{ -1/p},
$$

The same assertion holds also for $\tilde u = -u^- + M_0. $

\end{proposition}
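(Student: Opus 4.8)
The plan is to run the classical Moser iteration in its elliptic form, using the ``a contrario'' Young inequality of Proposition~\ref{CY} as the engine and the iterated Caccioppoli estimate of Proposition~\ref{iteratedCacciopoli} to feed it, exactly as one does for uniformly elliptic divergence-form operators; the point is that all the parabolic features (the $X_0$ term, the half-derivative in time) have already been absorbed into the right-hand sides of those two propositions, so the iteration itself is insensitive to the parabolic nature of $\mathcal L$. Concretely, I would fix $p>1$, set $M_0\ge\|f\|_{L^q}$ and $\tilde u=u^++M_0$, and for a sequence of radii $R_i=R(1+2^{-i})$ choose cutoffs $\phi_i\in C^\infty_0(B(x,R_i))$ with $\phi_i\equiv 1$ on $B(x,R_{i+1})$, $|\nabla\phi_i|\lesssim 2^i/R$ and $|X_0\phi_i|\lesssim 4^i/R^2$ (this is where the homogeneity \eqref{dilation} enters: $X_0$ has degree $2$, so its derivative on a scale-$2^{-i}R$ annulus costs $4^i/R^2$, which is the same order as $|\nabla\phi_i|^2$). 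Plugging into Proposition~\ref{CY} gives
\begin{equation*}
\Big(\fint_{B(x,R_{i+1})}\tilde u^{\,2k\chi}\Big)^{1/(2k\chi)}\le \big(C\,4^i R^{-2}\,|B(x,R_i)|\,\,\big)^{1/(2k)}\Big(\fint_{B(x,R_i)}\tilde u^{\,2k}\Big)^{1/(2k)},
\end{equation*}
where $\chi=Q/(Q-2)>1$; normalising the measures and writing $\gamma=2k$, $\Phi(\gamma,\rho)=\big(\fint_{B(x,\rho)}\tilde u^{\gamma}\big)^{1/\gamma}$, this reads $\Phi(\gamma\chi,R_{i+1})\le (C4^i)^{1/\gamma}\Phi(\gamma,R_i)$.

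Next I would iterate: starting from $\gamma_0=p$ (if $p<2$ one first reaches an exponent $\ge 2$ with a single application, or one simply allows $\gamma_0=p$ since the Caccioppoli inequality in Proposition~\ref{iteratedCacciopoli} is stated for $u\in L^{2k}$ with arbitrary real $k\neq0$), set $\gamma_i=p\chi^i$, and compose the inequalities over $i=0,1,2,\dots$. The product $\prod_i (C4^i)^{1/\gamma_i}=\prod_i (C4^i)^{\chi^{-i}/p}$ converges because $\sum_i \chi^{-i}<\infty$ and $\sum_i i\chi^{-i}<\infty$; the limit of $\Phi(\gamma_i,R_{i+1})$ is $\sup_{B(x,R)}\tilde u$ (radii $R_i\downarrow R$, exponents $\gamma_i\uparrow\infty$, and $L^{\gamma}$ averages converge to the sup on the shrinking balls). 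This yields
\begin{equation*}
\sup_{B(x,R)}\tilde u\le C\Big(\fint_{B(x,2R)}\tilde u^{\,p}\Big)^{1/p},
\end{equation*}
with $C=C(Q,\nu,q,p)$, which is the first claimed bound.

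For the second (infimum) bound I would apply the same machinery to negative exponents: Proposition~\ref{iteratedCacciopoli} and hence Proposition~\ref{CY} are valid for $k<0$ as well (the function $w=\tilde u^{k}/k$ is still the right substitution, and $\tilde u\ge M_0>0$ guarantees $\tilde u^{2k}\in L^1_{loc}$ after the truncation is removed), so running the identical iteration with $\gamma_i=-p\chi^i$ gives $\sup_{B(x,R)}\tilde u^{-1}=\big(\inf_{B(x,R)}\tilde u\big)^{-1}\le C\big(\fint_{B(x,2R)}\tilde u^{-p}\big)^{1/p}$, i.e. the stated lower bound on $\inf\tilde u$. The statement for $\tilde u=-u^-+M_0$ follows by applying everything to $-u$, which is also a weak solution (of $\mathcal L v=-f$) with the same structural constants.

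I expect the main obstacle to be purely bookkeeping rather than conceptual: one must check that the constant produced by the geometric series is genuinely independent of $M_0$ (it is, because $M_0$ enters Proposition~\ref{CY} only through the harmless replacement of $|f|$ by $|f|/M_0\le 1$ in the ``bad'' term, and the Caccioppoli constant is explicitly $M_0$-independent) and independent of the starting radius $R$ after normalising to averages (this uses the volume doubling $C_1\rho^Q\le|B(x,\rho)|\le C_2\rho^Q$ to convert the factor $R^{-2}|B(x,R_i)|$ appearing above into a dimensionless $4^i$). A secondary subtlety is the passage from the case $p\ge 2$, where the iteration is immediate, to general $p>1$: for $p\in(1,2)$ one either invokes the Caccioppoli/Young inequalities directly with the fractional exponent $k=p/2$, or inserts one preliminary step using a reverse Hölder / interpolation argument to replace the $L^p$ average on the right by an $L^2$ average on a slightly larger ball, at the cost of another bounded constant; either way the constant remains controlled and the conclusion is unchanged.
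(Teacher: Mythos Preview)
Your proposal is correct and is precisely the approach the paper intends: the paper's own proof consists of a single sentence referring to ``the standard proof, known in the elliptic setting, as can be found in \cite{GT}'', and what you have written is a careful unpacking of that standard Moser iteration using Proposition~\ref{CY} and Proposition~\ref{iteratedCacciopoli} as inputs. Your attention to the scaling of $|X_0\phi_i|$ (degree $2$, hence $\sim 4^i/R^2$) and to the $M_0$- and $R$-independence of the final constant are exactly the points one must verify to make the citation honest.
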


\begin{proof}
We can apply the standard proof, known in the elliptic setting, as can be found in \cite{GT}. 
\end{proof}
%
%
%\begin{corollary}\label{tildu}
%Assume that $u$ is a solution of $Lu=f$in an open set $U_0$, 
%with $f\in L^q(U_0)$, with $Q/2 \leq q <Q$. Let $V\subset \subset U\subset\subset U_0$
% we can now choose $M_0 = \max (2\sup_{U} |u|, ||f||_{L^q}(U_0))$. 
%Then assertion of proposition \ref{prosup}
%$\tilde u = u + M_0. $
%
%
%
%\end{corollary}

\begin{proposition}
Assume that $u$ is a solution of $\mathcal Lu=f\in L^q$, with  $Q/2 < q  $ in an open set $U_0$. 
Let $x_0\in U_0$, and assume that $\cup_{x \in B(x, 2R)}  \Omega(x,R) \subset \subset  U_0 $. Call $M_0 = \max ( ||f||_{L^q}, 2 \sup |u|)$  and $\tilde u = u + M_0. $
Then there exists  $p$ and a positive constant $C$ depending on $\sup u$, and $||f||_{L^q}$ such that 

$$
\Big( \int_{B(x, R)} {\tilde u}^{p} \Big) \Big( \int_{B(x, R)} {\tilde u}^{-p} \Big) \leq C. 
$$
\end{proposition}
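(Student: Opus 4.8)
The goal is to prove the key $L^p$--$L^{-p}$ estimate, which is the standard crossover lemma in Moser's iteration: the product of a small positive power and a small negative power of $\tilde u$ over a ball is controlled. The engine behind this is the John--Nirenberg inequality of Proposition \ref{JN} applied to $\log \tilde u$, combined with the logarithmic Caccioppoli estimate furnished by Proposition \ref{iteratedCacciopoli} (the $k=0$ case) and its parabolic companion in the corollary after Proposition \ref{prop42}.

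\medskip

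The plan is as follows. First I would set $w = \log \tilde u$, where $\tilde u = u + M_0$ with $M_0 = \max(\|f\|_{L^q}, 2\sup|u|)$, so that $\tilde u$ is comparable to a positive constant from above and below on $U_0$ and $w$ is a bounded function; this legitimizes all the manipulations. The crucial point is to verify the hypotheses \eqref{JN1} and \eqref{JN2} of Proposition \ref{JN} for $w$ on sets of the form $V_R = \cup_{x\in V}\Omega(x,R)$. For \eqref{JN1}, I would take $\phi$ a cutoff equal to $1$ on $V_R$ and supported in a slightly larger set inside $U_0$, and apply the logarithmic Caccioppoli inequality
$$\int |\nabla(\log(\tilde u)\phi)|^2 \leq C\int \Big(\phi^2 + \tfrac{|\mathcal Lu|}{M_0}\phi^2 + \log(\tilde u)\phi|X_0\phi| + |\nabla\phi|^2\Big);$$
since $|\mathcal Lu|/M_0 = |f|/M_0 \leq 1$ in the relevant region (or bounded in $L^{q/2}$ with $q>Q$ after reducing the exponent, using the $M^p$ bound from the remark after Definition of $M^p$) and $\log\tilde u$ is bounded, the right-hand side is bounded by $C|U_0|$, hence $\int_{V_R}|\nabla w|^2 \leq C$, and then Cauchy--Schwarz gives $\int_{V_R}|\nabla w| \leq C |V_R|^{1/2} \leq C R^{Q/2} \leq K R^{Q-1}$ once $R$ is small (or, more carefully, one uses the scaling structure so the constant $K$ is uniform). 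For the second condition \eqref{JN2}, in the general (Kolmogorov) case I use $R\int_{V_R}|\mathcal L w|$; since $\mathcal L w = \mathcal L \log \tilde u$ can be expressed via the Caccioppoli computation in terms of $\nabla w$ and lower order terms, this is again controlled, and in the parabolic case one uses instead $\int_{V_R}|D_t^{1/2}w| \leq K R^{Q-1}$ coming from the parabolic Caccioppoli estimate with $H(t)w\phi^2$ as test function. Once both conditions hold, Proposition \ref{JN} yields
$$\int_V \exp\Big(\frac{|w(x) - w_V|}{K}\Big)\,dx \leq C\,\mathrm{diam}^Q(U_0).$$

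\medskip

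From the John--Nirenberg bound the conclusion is the usual two-line argument: choosing $p = 1/K$ (more precisely $p$ small enough that $p K \leq 1$), one has
$$\int_{B(x,R)} \tilde u^{p}\,dx = \int_{B(x,R)} e^{p w}\,dx \leq e^{p w_V}\int_V e^{p|w - w_V|}\,dx \leq C\, e^{p w_V},$$
and symmetrically $\int_{B(x,R)} \tilde u^{-p}\,dx \leq C\, e^{-p w_V}$; multiplying the two the factors $e^{\pm p w_V}$ cancel and we obtain the claimed bound $\big(\int_{B(x,R)}\tilde u^p\big)\big(\int_{B(x,R)}\tilde u^{-p}\big) \leq C$. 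One should note here that $-w^- = \log$ applied to $-u^- + M_0$ etc. is handled identically using the last sentence of Proposition \ref{iteratedCacciopoli}, so that both the sup-side (via $u^+$) and inf-side (via $-u^-$) truncations are covered, which is what is needed when this estimate is later combined with Proposition \ref{prosup} to produce the Harnack inequality of Theorem \ref{harnack}.

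\medskip

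The main obstacle I anticipate is bookkeeping the hypotheses of Proposition \ref{JN} with constants that do not degenerate: one must check that $K$ (which controls $\int|\nabla w|$ and $R\int|\mathcal Lw|$ on the swollen set $V_R$) can be taken independent of $R$ for $R$ in the admissible range, which forces careful use of the homogeneous dimension estimate $|B(x,r)|\sim r^Q$ and of the $M^p$-type bound for $f$ (the remark after Definition of $M^p$, with $q>Q$). A secondary subtlety is that $\mathcal L \log\tilde u$ is not literally an $L^p$ right-hand side, so one must interpret $\int|\mathcal L w|$ through the Caccioppoli identity rather than pointwise; in the parabolic sub-case this is cleaner since one directly estimates $D_t^{1/2} w$. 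Modulo these uniformity checks, the argument is exactly the classical Moser crossover lemma transported into the present geometric framework.
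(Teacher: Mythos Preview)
Your proposal is correct and follows essentially the same route as the paper: set $w=\log\tilde u$, use the logarithmic Caccioppoli inequality of Proposition~\ref{iteratedCacciopoli} together with Cauchy--Schwarz to verify \eqref{JN1}, verify \eqref{JN2}, apply Proposition~\ref{JN}, and conclude by the standard $e^{\pm p w_V}$ cancellation (which the paper abbreviates as ``as in Gilbarg--Trudinger, page~198''). The only place the paper is more concrete than you is in handling \eqref{JN2}: rather than leaving $\int|\mathcal L w|$ to be ``interpreted through the Caccioppoli identity,'' it writes down the pointwise formula $\mathcal L w = \tfrac{1}{\tilde u}\mathcal L\tilde u - a_{ij}X_i w X_j w$ and bounds the two pieces separately (the first by H\"older on $f\in L^q$, the second by the same Caccioppoli estimate already used for the gradient).
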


\begin{remark}
Notice the loss of uniform invariance in the previous proposition.
\end{remark}

\begin{proof}
Let us verify the assumptions of Proposition \ref{JN}. 
Let us call $w = log (\tilde u)$ Let us compute 
$Lw$: 
%$$\nabla w = \frac{\nabla \tilde u}{\tilde u}$$
%$$X_i(a_{ij} X_j w) = X_i(a_{ij} \frac{X_j \tilde u}{\tilde u})= 
%X_i(a_{ij} X_j \tilde u) \frac{1}{\tilde u} -\frac{1}{\tilde u^2}a_{ij} X_i\tilde u  X_j \tilde u
%$$
%$$X_0w = \frac{1}{\tilde u} X_0\tilde u
%$$
%
%Quindi 
$$\mathcal L w = \frac{1}{\tilde u} \mathcal L\tilde u - a_{ij}X_i w X_j w$$
Then 
$$R \int | \mathcal L\tilde u| \phi^2 \leq \Big(\int  f^q \phi^2\Big)^{1/q} R^{(1-1/q)Q+1}\leq R^{Q-1}$$
(by Proposition \ref{iteratedCacciopoli})
$$
\int |\nabla ({w} \phi)|^2 \leq 
 R\int  \big(\phi^2 +\frac{|\mathcal Lu|}{M_0} \phi^2 + |\log(  \tilde u )| \phi |X_0\phi|  + |\nabla \phi|^2 \big)\leq $$
since $Q/q\leq2.$ In addition 
$$\int_{B_R} |\nabla ({w} \phi)|\leq R^{Q/2}\Big(\int |\nabla ({w} \phi)|^2 \Big)^{1/2}
$$
(by Proposition \ref{iteratedCacciopoli})
$$
\leq R^{Q/2}C\Big(\int  \big(\phi^2 +\frac{|\mathcal Lu|}{M_0} \phi^2 + |\log(  \tilde u )| \phi |X_0\phi|  + |\nabla \phi|^2 \big)\Big)^{1/2}
$$
$$\leq  R^{Q-1} + \Big(\int  f^q \phi^2 \Big)^{1/q} R^{Q(1- 1/q)/2 + Q/2} \leq R^{Q-1},$$
since $q\geq Q/2$.  

Now the conclusion follows as in Gilbarg Trudinger \cite{GT}, page 198. 
\end{proof}

{\sl Acknowledgements}:  YS is partially funded by  DMS Grant $2154219$, " Regularity {\sl vs} singularity formation in elliptic and parabolic equations". The work started in the occasion of a visit of GC at Johns Hopkins University. She would like to thank the Department for the hospitality.   Part of this work has been carried out at the occasion of a visit of YS at Universita di Bologna. He would like to thank the Department for the hospitality. Part of the work has been carried out while YS and GC were in residence at Institut Mittag-Leffler in Djursholm, Sweden during the semester "Geometric Aspects of Nonlinear Partial Differential Equations", supported by the Swedish Research Council under grant no. 2016-06596. GC is partially funded by the  project RISE EU H2020 GHAIA Grant n. 777822.

\bibliographystyle{abbrv} 
\bibliography{bib}

\end{document}